\numberwithin{subsection}{section}
\newtheorem{theorem}{Theorem}[section]
\theoremstyle{plain}
\newtheorem{assumption}{Assumption}
\newtheorem{corollary}[theorem]{Corollary}
\newtheorem{definition}{Definition}
\newtheorem{lemma}[theorem]{Lemma}
\newtheorem{proposition}[theorem]{Proposition}
\newtheorem{remark}{Remark}
\numberwithin{equation}{section}
\newcommand{\indep}{\perp\hspace{-.25cm}\perp}
\newcommand{\E}{\mathbb{E}}
\renewcommand{\P}{\mathbb{P}}
\newcommand{\R}{\mathbb{R}}
\newcommand{\N}{\mathbb{N}}
\newcommand{\matdeux}[4]{ \begin{pmatrix} #1 & #2 \\ #3 & #4 \end{pmatrix} }
\renewcommand{\^}[1]{\widehat{#1}}
\newtheorem*{theth}{Theorem}
\newcommand{\dd}{ \mathrm{d} }
\newcommand{\parti}[1]{\frac{\partial}{\partial #1} }
\DeclareMathOperator{\im}{Im} 
\DeclareMathOperator{\tr}{tr} 
\DeclareMathOperator{\diag}{diag} 
\DeclareMathOperator{\spann}{span} 
\DeclareMathOperator{\var}{var} 
\DeclareMathOperator{\rank}{rank}
\begin{document}
\bibliographystyle{plainnat}

\title[Test Function]{Test function: A new approach for covering the central subspace}
\author{Fran\c cois Portier and Bernard Delyon}

\maketitle
\begin{center}
\textit{{\small IRMAR, University of Rennes 1\\ Campus de Beaulieu\\ 35042 Rennes Cedex, France\\
E-mails: \emph{\textbf{francois.portier@univ-rennes1.fr\\ bernard.delyon@univ-rennes1.fr }}}}
\end{center}

\bigskip

\begin{abstract}
\noindent In this paper we offer a complete methodology for sufficient dimension reduction called the test function (TF). TF provides a new family of methods for the estimation of the central subspace (CS) based on the introduction of a nonlinear transformation of the response. Theoretical background of TF is developed under weaker conditions than the existing methods. By considering order $1$ and $2$ conditional moments of the predictor given the response, we divide TF in two classes. In each class we provide conditions that guarantee an exhaustive estimation of the CS. Besides, the optimal members are calculated via the minimization of the asymptotic mean squared error deriving from the distance between the CS and its estimate. This leads us to two plug-in methods which are evaluated with several simulations. 

\bigskip

\noindent \textbf{AMS 2000 subject classifications:} Primary 62G08; secondary 62H11, 62H05.

\noindent \textbf{Key words:} Sufficient dimension reduction; Central subspace; Inverse regression; Slicing estimation. 
\end{abstract}

\section{Introduction}

Dimension reduction in regression aims at improving poor convergence rates derived from the nonparametric estimation of the regression function in large dimension. It attempts to provide methods that challenge the curse of dimensionality by reducing the number of predictors. A specific dimension reduction framework, called the \textit{sufficient dimension reduction} (SDR) has drawn attention in the last few years. Let $Y$ be a random variable and $X$ a \textit{p}-dimensional random vector. To reduce the number of predictors, it is proposed to replace $X=(X_1,...,X_p)^T$ by a number smaller than $p$ of linear combinations of the predictors. The new covariate vector has the form $PX$, where $P$ can be chosen as an orthogonal projection on a subspace $E$ of $\R^p$. Clearly, this kind of methods relies on an alchemy between the dimension of $E$, which needs to be as small as possible, and the preservation of the information carried by $X$ about $Y$ through the projection on $E$. In the SDR literature, mainly two kind of spaces have been studied. First a \textit{dimension reduction subspace} (DRS) [\citet*{li1991}] is defined by the conditional independence property
\begin{equation}\label{drs}
Y\indep X \ |\ P_cX,
\end{equation}
where $P_c$ is the orthogonal projection on a DRS. With words, it means that knowing $P_cX$, there is no more information carried by $X$ about $Y$. It is possible to show that (\ref{drs}) is equivalent to
\begin{equation}\label{drs2}
\mathbb{P}(Y\in A|X)=\mathbb{P}(Y\in A|P_{c}X),
\end{equation}
for any measurable set $A$. Moreover under some additional conditions [\citet*{cook1998}], the intersection of all the DRS is itself a DRS. Consequently, there exists a unique DRS with minimal dimension and we call it the \textit{central subspace} (CS). In this article the CS is noted $E_c$. Secondly, another space called a \textit{mean dimension reduction subspace} (MDRS) has been defined in \citet*{cook2002} as
\begin{equation}\label{mdrs2}
\E[Y|X]=\E[Y|P_mX],
\end{equation}
where $P_m$ is the orthogonal projection on a MDRS. Clearly, the existence of a MDRS requires a weaker assumption than the existence of a DRS and therefore it seems to be more appropriate to the context of regression. Because of the analogous equation of (\ref{mdrs2}),
\begin{equation}\label{mdrs}
Y\indep \E[Y|X] \ \ |\ P_mX,
\end{equation}
the definition of a MDRS imposes that all the dependence between $Y$ and its regression function on $X$ is carried by $P_mX$. If the intersection of all the MDRS is itself a MDRS, then it is called the \textit{central mean subspace} (CMS) [\citet*{cook2002}]. In the following the CMS is noted $E_m$. Finally, notice that because a DRS is a MDRS, the CS contains the CMS.\newline
There exists many methods for estimating the CS and the CMS and these methods can be divided into two groups, those who require some assumptions on the distribution of the covariates and those who do not. The second group includes \textit{structure adaptive method} (SAM) [\citet*{hristache2001}], \textit{minimum average variance estimation} (MAVE) [\citet*{xia2002}], and \textit{structural adaptation via maximum minimization} (SAMM) [\citet*{dalalyan2008}]. Those methods are free from conditions on the predictors but require a non parametric estimation of the regression function $\E[Z|X=x]$. In this article we are concerned only with methods of the first group and we quote them in the following.

\bigskip
To be more comprehensive, from now on we work in term of standardized covariate $Z=\Sigma^{-\frac 1 2}(X-\E[X])$ with $\Sigma=\var(X)$. Hence we define the standardized CS as $\Sigma^{\frac 1 2} E_c$. Since there is no ambiguity, we still note it $E_c$ and we still denote by $P_c$ the orthogonal projection on it. For any matrix $M$, we note $\spann(M)$ the space generated by the columns of $M$. 

All the methods of the first group derive from the principle of inverse regression : instead of studying the regression curve which implies high dimensional estimation problems, the study is based on the inverse regression curve $\E[Z|Y=y]$ or the inverse variance curve $\var(Z|Y=y)$. To infer about the CS, order $1$ moment based methods require that
\begin{assumption}\label{linearity}{(Lineariy condition)}
\begin{equation*}
Q_c \mathbb{E}[Z|P_cZ]=0\quad\text{a.s.,}
\end{equation*}
\end{assumption}
\noindent where $Q_c=I-P_c$. Under the linearity condition and the existence of the CS, it follows that $\E[Z|Y] \in E_c$ a.s. and then if we divide the range of $Y$ into $H$ slices $I(h)$, we have for every $h$, 
\begin{equation}\label{irc}
m_h=\E[Z|Y\in I(h)]\in E_c,
\end{equation}
and clearly, the space generated by some estimators of the $m_h$'s estimate the CS, or a subspace of it. To obtain a basis of this subspace, \citet*{li1991} proposed a principal component analysis and this leads to an eigendecomposition of the matrix
\begin{equation}\label{uut}
\widetilde{M}_{SIR} = \sum_h p_h m_h m_h^T,
\end{equation}
where $p_h=\P(Y\in I(h))$. Many methods relying on the inverse regression curve such as \textit{sliced inverse regression} (SIR) [\citet*{li1991}] have been developed. Other ways to estimate the inverse regression curve are investigated in \textit{kernel inverse regression} (KIR) [\citet*{zhu1996}] and \textit{parametric inverse regression} (PIR) [\citet*{bura1997}]. Instead of a principal component analysis, the minimization of a discrepancy function is studied in \textit{inverse regression estimator} (IRE) [\citet*{cook2005}] to obtain a basis of the CS. For a complete background about order $1$ methods, we refer to \citet*{cook2005}.

Otherwise, in addition to the linearity condition order $2$ moments based methods require that
\begin{assumption}{(Constant conditional variance (CCV))}
\begin{equation*}
\var(Z|P_cZ)= Q_c\quad\text{a.s.,}
\end{equation*}
\end{assumption}
\noindent then under the linearity condition, CCV and the existence of the CS, it follows that $\spann(var(Z|Y)-I)\in E_c$ a.s. and by considering a slicing of the response, we have 
\begin{equation}\label{ivc}
\spann(v_h-I)\subset E_c,
\end{equation} 
where $v_h=\var(Z|Y\in I(h))$. Since the spaces generated by the matrices $(v_h-I)$'s are included in the CS, \textit{sliced average variance estimation} (SAVE) in \citet*{cook1991} proposed to make an eigendecomposition of the matrix
\begin{equation*}
\widetilde{M}_{SAVE}=\sum_h p_h(v_h-I)^2,
\end{equation*}
to derive a basis of the CS. Another combination of matrices based on the inverse variance curve is developed in \textit{sliced inverse regression-II} (SIR-II) [\citet*{li1991}]. More recently, \textit{contour regression} (CR) [\citet*{li2005}], and \textit{directional regression} (DR) [\citet*{li2007}] investigate a new kind of estimator based on empirical directions. Besides, methods for estimating the CMS also require Assumptions $1$ and $2$. They include \textit{principal Hessian direction} (pHd) [\citet*{li1992}], and \textit{iterative Hessian transformation} (IHT) [\citet*{cook2002}]. To clear the failure of certain methods when facing pathological models and keep their efficiency in other cases, some combinations of the previous methods as SIR and SIR-II, SIR and pHd or SIR and SAVE have been studied in \citet*{saracco2003} and \citet*{ye2003}.

As we have just highlighted, Assumptions $1$ and $2$ are necessary to respectively characterize the CS with the inverse regression curve and the inverse variance curve. A first point is that the linearity condition and CCV assumed together are really close to an assumption of normality on the predictors. Moreover for each quoted method, these assumptions guarantee only that the estimated CS is included asymptotically in the true CS. A crucial point in SDR literature and a recent new challenge is to propose some methods that allow an \textit{exhaustive} estimation of the CS under mild conditions. Some recent research are concerned with this problem, \citet*{li2005} and \citet*{li2007} proposed a new kind of assumptions that guarantee the exhaustivity.   
\bigskip

There exists a large range of methods aiming at the estimation of the CS. In this paper, we try to propose a general point of view about SDR by introducing the test function method (TF). The original basic idea of TF is to investigate the dependence between $Z$ and $Y$ by introducing nonlinear transformations of $Y$, and inferring about the CS through their covariances with $Z$ or $ZZ^T$. Actually, an important difference between TF and other methods is that neither the inverse regression curve and nor the inverse variance curve are estimated as it is suggested by equation (\ref{irc}) and (\ref{ivc}). In this paper, these two curves are a working tool but the inference about the CS is obtained through some covariances. More precisely, the CS is obtained either by inspection of the range of  
\begin{equation*}
\E[Z\psi(Y)],
\end{equation*}
when $\psi$ varies in a well chosen finite family of function or either by an eigendecomposition of 
\begin{equation*}
\E[ZZ^T\psi(Y)],
\end{equation*}
where $\psi$ is a well chosen function. Hence two kind of methods can be distinguished, the order $1$ test function methods (TF1) and the order $2$ test function methods (TF2). Notice that $\widetilde{M}_{SIR}$ is an estimate of $\E[Z\E[Z|Y]^T]$, hence SIR may be seen as a particular case of TF1. In this paper, we show that TF allows to relax some hypotheses commonly assumed in the literature, especially we alleviate the CCV hypothesis for TF2. Moreover for each methods, we provide mild conditions ensuring an exhaustive characterization of the CS. Finally, an asymptotic variance analysis leads us to the optimal transformation of $Y$ for the estimation of the CS. As a result a significant improvement in accuracy is targeted by TF. The present work is divided in the three following principal parts :   
\begin{itemize}
\item[\textbullet]  Existence of the CS

\item [\textbullet]  Exhaustivity of TF

\item [\textbullet]  Optimality for TF
\end{itemize}
More precisely, it is organized as follows. In section \ref{s2}, we investigate some new conditions ensuring the existence of the CS and the CMS. In section \ref{s3}, we introduce TF1 and TF2 by providing some basic results. Conditions for an exhaustive characterization of the CS are presented in section \ref{s4}. The choice of the optimal transformation of the response for TF1 and TF2 is detailed in section \ref{s5}. Accordingly, we propose two plug-in methods deriving from the minimization of the MSE. And finally, in section \ref{s7} we compare both methods to existing ones through simulations.

\section{Unicity of the central subspace and the central mean subspace}\label{s2}

Conditions on the unicity of subspaces that allow a dimension reduction are investigated in this section. This problem has drawn the attention early in the literature but it seems not to be the case anymore. As a consequence of the definition of the CS (resp. CMS), its existence is equivalent to the unicity of a DRS (resp. MDRS) with minimal dimension. In \citet*{cook1998}, proposition $6.4$ p.$108$, it is shown that the existence of the CS can be obtained by constraining the distribution of $X$. More precisely, the CS exists under the assumption that $X$ has a convex density support. Moreover, in \citet*{cook2002}, the existence of the CMS is ensured under the same condition than the CS. We prove in Theorem \ref{CMS} and Corollary \ref{CS} below that the convexity assumption can be significantly weakened. Here, the standardization of the predictors do not change the presentation of our results, hence we present it for $X$. For a comprehensive proof of our theorems we need the following lemma.
\begin{lemma}\label{lem1}
If the restriction of $X$ to the ball of $\mathbb{R}^p$ with radius $r$ and center $x_0$ has a strictly positive density, then the intersection of all the MDRS is a MDRS on this ball, i.e.
\begin{equation*}
(\mathbb{E}[Y|X]-\mathbb{E}[Y|RX])\mathds{1}_{\{X\in B(x_0,r)\}}=0,
\end{equation*}
where $R$ denotes the orthogonal projection onto the intersection of all MDRS.
\end{lemma}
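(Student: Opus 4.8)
The lemma claims that, restricted to a ball $B(x_0,r)$ on which $X$ has strictly positive density, the intersection $R$ of all mean dimension reduction subspaces (MDRS) is itself a MDRS. The subtle point is that intersecting subspaces is a delicate operation for conditional-expectation constraints: if $\E[Y|X]=\E[Y|P_1 X]$ and $\E[Y|X]=\E[Y|P_2 X]$, it is not automatic that $\E[Y|X]=\E[Y|RX]$ where $R$ projects onto the intersection of the two ranges. Let me think about what structure makes this work.

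Let me reconsider the LaTeX constraints and produce a clean proof plan.

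=== PROOF PROPOSAL ===

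The plan is to reduce the statement to the stability of the defining relation \eqref{mdrs2} under intersection, working locally on the ball where positivity of the density gives us the freedom to manipulate conditional expectations without null-set pathologies. I would first record that for any MDRS with projection $P$, the defining property $\E[Y|X]=\E[Y|PX]$ says that the regression function $g(x):=\E[Y|X=x]$ is, on the support, a function of $Px$ alone; equivalently $g$ is constant along the directions of $\ker P=\spann(Q)$ where $Q=I-P$. The strict positivity of the density on $B(x_0,r)$ is precisely what lets me pass from the almost-sure statement to a genuine ``$g$ depends only on $Px$'' statement on the ball, since every translate $x+Qu$ staying inside the ball is charged with positive probability.

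Next I would treat the intersection. Let $P_1,\dots,P_k$ be projections onto finitely many MDRS with intersection $\spann(R)$; it suffices to handle two of them and iterate, so take $E=\spann(R)=E_1\cap E_2$. The key geometric input is that $\ker R=\ker P_1+\ker P_2$, i.e. the complementary directions \emph{add up}. So on the ball, $g$ is constant along $\ker P_1$ (by MDRS property of $P_1$) and constant along $\ker P_2$ (by MDRS property of $P_2$), hence constant along the whole subspace $\ker P_1+\ker P_2=\ker R$ generated by these directions. This shows $g(x)$ depends only on $Rx$ on the ball, which is exactly
\begin{equation*}
(\E[Y|X]-\E[Y|RX])\mathds{1}_{\{X\in B(x_0,r)\}}=0.
\end{equation*}
The reason the argument must be localised is that ``constant along each of two directions'' only propagates to ``constant along their span'' if one can chain together short segments in each direction while staying where the density is positive; convexity of the ball guarantees such polygonal paths exist, which is why a ball (rather than an arbitrary positive-density set) is the natural hypothesis.

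The main obstacle I anticipate is making the step ``constant along $\ker P_1$ and along $\ker P_2$ implies constant along $\ker P_1+\ker P_2$'' rigorous at the level of conditional expectations rather than for a fixed continuous representative. Concretely, the MDRS hypotheses give $g(x)=g(x+Q_1 u)$ and $g(x)=g(x+Q_2 v)$ only for almost every $x$ and almost every displacement, so I must upgrade these to a statement that survives composition. The clean way is a Fubini/absolute-continuity argument: using strict positivity of the density on the ball, integrate the difference $g(x)-g(Rx)$ against test directions and show it vanishes, exploiting that any two points in the ball with the same $R$-projection are joined by a path with segments parallel to $\ker P_1$ and $\ker P_2$. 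Handling the measure-zero exceptional sets so that the chaining is valid simultaneously for the relevant displacements is the technical heart of the proof; everything else is either geometry of the projections or the routine translation between the conditional-independence form \eqref{mdrs} and the functional form \eqref{mdrs2}.
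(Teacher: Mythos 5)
Your route is genuinely different from the paper's, and its geometric skeleton is correct: for orthogonal projections onto $E_1$ and $E_2$ one has $\ker R=(E_1\cap E_2)^{\perp}=\ker P_1+\ker P_2$, and a staircase of small steps alternately parallel to $\ker P_1$ and $\ker P_2$, kept inside the ball by convexity, joins any two points of the ball having the same $R$-projection. The gap is that the step you yourself call ``the technical heart'' is the entire difficulty of the lemma, and it is not carried out. The chaining identity $g(x)=g(x+u)=g(x+u+v)$ is not available to you: the MDRS property gives $g=g_1\circ P_1$ and $g=g_2\circ P_2$ only off unknown Lebesgue-null sets, and the chaining requires these identities at specific intermediate points which those null sets may well exclude. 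Your proposed repair is too vague to check and, as written, does not parse: $g=\E[Y|X=\cdot\,]$ is defined only up to null sets, while the range of $R$ is a Lebesgue-null subspace of $\R^p$, so ``$g(Rx)$'' has no meaning --- the object to compare $g$ with is the separate conditional expectation $\E[Y|RX]$, and constructing an $RX$-measurable version of $\E[Y|X]$ is precisely what the lemma asks. (A Fubini/disintegration argument of this type can be completed: disintegrate along the fibers of each foliation, show that the fiber constants agree whenever two slices overlap in positive measure, and conclude by connectedness; but none of this appears in your text.)

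It is worth seeing how the paper disposes of exactly this null-set obstruction. It mollifies: $f_k=(g\circ P)\ast\varphi_k$ is a genuine continuous function satisfying $f_k(x)=f_k(Px)=f_k(P'x)$ pointwise on $B(0,r-\varepsilon)$, and pointwise identities can be iterated freely, giving $f_k(x)=f_k((PP')^nx)$; the iterates remain in the ball because orthogonal projections do not increase norms, which is why the proof first recenters the ball at $0$ --- this plays the role your convexity/staircase argument was meant to play. Von Neumann's alternating projection theorem $(PP')^n\to R$ and continuity then give $f_k(x)=f_k(Rx)$, and an $L_1$ passage to the limit with extraction of an a.s.\ convergent subsequence yields the $RX$-measurable version of $\E[Y|X]$. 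Either this mollification device or a fully executed Fubini argument is what your proposal still needs; as it stands it is a plausible plan, not a proof.
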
  
\begin{proof}
It suffices to show the theorem for two MDRS. We first make the proof for a ball centered in $0$, and then we apply it to $X-x_0$. Let $E$ and $E'$ be two MDRS and $P$ and $P'$ their respective orthogonal projections. Denote by $R$ the orthogonal projection onto the subspace $E\bigcap E'$. Using the definition of a MDRS, 
\begin{equation*}
\mathbb{E}[Y|X]=\mathbb{E}[Y|PX]=\mathbb{E}[Y|P'X]\quad \text{a.s..}
\end{equation*}
Let $g(PX)$ and $h(P'X)$ denote the last two functions of the preceding equation. Using that $X$ has a strictly positive density on the unit sphere, we can write
\begin{equation}\label{ae}
g(Px)=h(P'x)\quad\text{a.e. on } B(0,r).
\end{equation}
Let $\varepsilon >0 $, and $\varphi_k$ be a unit approximation with compact support $B(0,\varepsilon)$, we define the function $f_k :B(0,r)\rightarrow \R$ such that
\begin{equation*}
f_k(x)=(g\circ P)\ast \varphi_k\ (x).
\end{equation*}
Then, we have for all $ x$,
\begin{eqnarray*}
f_k(x) &=& \int g(P(x-y))\varphi_k(y) \dd y\\
 &=& f_k(Px).
\end{eqnarray*}
Moreover, for all $x\in B(0,r-\varepsilon)$ since in the above integral $x-y\in B(0,r)$, using (\ref{ae}) we derive
\begin{equation*}
f_k(x)=(h\circ P')\ast \varphi_k\ (x),
\end{equation*}
and similarly we obtain $f_k(x)=f_k(P'x)$. Since $f_k(x)=f_k(Px)=f_k(P'x)$, a simple iteration process provides for all $x\in B(0,r-\varepsilon)$,
\begin{equation*}
f_k(x)=f_k((PP')^nx).
\end{equation*}
Since $f_k$ is a continuous function and $(PP')^n\underset{n\rightarrow \infty}{\longrightarrow} R$,
\begin{equation*}
f_k(x)=f_k(Rx),\quad x\in B(0,r-\varepsilon).
\end{equation*}
To conclude, the unit approximation theorem gives us the convergence
\begin{equation*}
f_k\circ R \overset{L_1}{\longrightarrow} g\circ P.
\end{equation*}
Thus, from $f_k(RX)$ we can derive a subsequence $f_{n_k}(RX)$ that converge almost surely to $g(PX)$, proving that $\E[Y|X]$ is a function of $RX$. This completes the first part of the proof.

Now suppose that $X$ has a strictly positive density onto the ball of radius $r$ and center $x_0$. Define $\widetilde{X}=X-x_0$, it is clear that a MDRS for $X$ is also a MDRS for $\widetilde{X}$ and conversely. Then, since $\widetilde{X}$ is centered in $0$, the intersection of two MDRS is still a MDRS for $\widetilde{X}$ and obviously for $X$.
\end{proof}

The following theorem provides us the existence of the CMS under a weaker condition than in~\citet*{cook1998}. The same result on the existence of the CS is presented in a corollary that follows the theorem.
\begin{theorem}\label{CMS}
If $X$ has a density such that the Lebesgue measure of the boundary of its support is equal to $0$, then the CMS exists.
\end{theorem}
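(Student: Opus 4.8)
The plan is to prove directly that the intersection $E^{\star}$ of all the MDRS is itself a MDRS, since by the very definition of the CMS recalled in the introduction this is exactly the assertion that the CMS exists. Writing $R$ for the orthogonal projection onto $E^{\star}$, the target is thus the global almost-sure identity $\E[Y|X]=\E[Y|RX]$. Lemma~\ref{lem1} already delivers this identity locally: on every ball $B(x_0,r)$ on which $X$ admits a strictly positive density one has $(\E[Y|X]-\E[Y|RX])\mathds{1}_{\{X\in B(x_0,r)\}}=0$. The whole proof then reduces to patching these local identities into a single global one, the role of the hypothesis being to guarantee that almost every realisation of $X$ is seen by such a ball.

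First I would introduce the support $S$ of $X$, its interior $O=\mathrm{int}(S)$ and its boundary $\partial S=S\setminus O$, and split $\R^{p}=O\sqcup\partial S\sqcup S^{c}$. On $O$ I would select, around each point where the density is positive, an open ball contained in $O$ on which the density stays positive; since $O$ is open and $\R^{p}$ is second countable, countably many such balls $B_i$ suffice to exhaust the positive-density part of $O$. Applying Lemma~\ref{lem1} on each $B_i$ and taking the countable union of the corresponding null events gives $\E[Y|X]=\E[Y|RX]$ almost surely on $\bigcup_i\{X\in B_i\}$.

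It then remains to check that this covering carries full mass. By definition of the support $\P(X\in S^{c})=0$, and the remaining piece to discard is the boundary: this is exactly where the assumption enters, since $\P(X\in\partial S)=\int_{\partial S}f\,\dd x=0$ because $\partial S$ is Lebesgue-null. Hence $X$ lands almost surely in the region swept by the good balls, the local identities combine into $\E[Y|X]=\E[Y|RX]$ almost surely, and $E^{\star}$ is a MDRS, i.e. the CMS exists. I would underline that this replaces the convex-support condition of \citet*{cook1998} by a requirement that only controls the \emph{size} of $\partial S$.

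The step I expect to be the genuine obstacle is precisely this globalisation: ensuring that almost every $X$ falls into a ball on which Lemma~\ref{lem1} is applicable. The mollification used in that lemma collapses as soon as a ball meets the set where the density vanishes, since there the conditional expectations are not pinned down by the almost-sure relation; one must therefore build the cover out of balls on which the density remains positive and then argue that the leftover interior set (together with $S^{c}$ and $\partial S$) carries no probability mass. Reconciling the geometry of this cover with the null-boundary hypothesis — so that nothing of positive $X$-mass escapes it — is the delicate heart of the argument, whereas the passage from two MDRS to the intersection of all of them is harmless because the lattice of subspaces of $\R^{p}$ is finite-dimensional and the minimal intersection is already attained by finitely many.
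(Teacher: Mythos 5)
Your proposal is correct and follows essentially the same route as the paper: a countable cover of the interior of the support by balls on which Lemma~\ref{lem1} applies (the paper takes balls with rational centers and radii, you invoke second countability — the same device), then patching the local identities and using the null-boundary hypothesis to ensure $X$ falls almost surely in the swept region, after which uniqueness of a minimal-dimension MDRS gives the CMS. The one caveat — that balls on which the density \emph{stays} positive only sweep the interior of $\{f>0\}$ rather than every positive-density point — is shared with the paper, which tacitly assumes the density is strictly positive on the interior of its support, so it is not a gap you introduced.
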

\begin{proof}
Denote by $F\subset \R^p$ the support of the density of $X$. A first step consists in showing that its interior $\mathring{F}$ can be covered by a countable number of balls included in $\mathring{F}$. Secondly, we apply Lemma \ref{lem1} to each of this balls to obtain that the intersection of two MDRS on $\mathring{F}$ is a MDRS on $\mathring{F}$. Finally, the unicity is shown.\newline
Let $x\in \mathring{F}$, then it exists $r>0$ such that $B(x,r)\subset\mathring{F}$. It is possible to find a ball with rational center and radius included in $B(x,r)$ and containing $x$. Thus any $x$ of $\mathring{F}$ is contained in a ball with rational center and radius included in $\mathring{F}$. In other words, the set A formed by all the balls $B(q,r_0)\subset \mathring{F}$ with $q$ and $r_0$ rational covers $\mathring{F}$. Therefore, by applying Lemma \ref{lem1}, we have for all $B(q,r_0)\in A$,
\begin{equation*}
|\mathbb{E}[Y|X]-\mathbb{E}[Y|RX]|\mathds{1}_{\{X\in B(q,r_0)\}}=0,
\end{equation*}
Since $A$ is a countable set,
\begin{equation*}
\sum_{(q,r_0)\in A}|\mathbb{E}[Y|X]-\mathbb{E}[Y|RX]|\mathds{1}_{\{X\in B(q,r_0)\}}=0,
\end{equation*}
then,
\begin{equation*}
|\mathbb{E}[Y|X]-\mathbb{E}[Y|RX]|\sum_{(q,r_0)\in A}\mathds{1}_{\{X\in B(q,r_0)\}}=0. 
\end{equation*}
By assumption $\P(X\in \mathring{F})=1$, then the right-hand side is almost surely strictly positive, and thus
\begin{equation*}
\mathbb{E}[Y|X]=\mathbb{E}[Y|RX] \quad\text{a.s..}
\end{equation*}
Consequently, the intersection of two MDRS is a MDRS. To complete the proof, all the MDRS with minimal dimension have the same dimension and their intersection is still a MDRS with minimal dimension. Hence a MDRS with minimal dimension is unique and the CS exists.
\end{proof}

\begin{corollary}\label{CS}
If $X$ has a density such that the Lebesgue measure of the boundary of its support is equal to $0$, then the CS exists.
\end{corollary}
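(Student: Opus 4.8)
The plan is to mirror the proof of Theorem \ref{CMS}, the only genuinely new ingredient being a version of Lemma \ref{lem1} adapted to dimension reduction subspaces (DRS) rather than mean subspaces. The bridge between the two settings is the remark that, by the equivalent formulation (\ref{drs2}), a subspace $E$ with projection $P$ is a DRS if and only if $\P(Y\in A\mid X)=\P(Y\in A\mid PX)$ a.s.\ for every measurable $A$; writing $Y_A=\mathds{1}_{\{Y\in A\}}$, this says exactly that $E$ is a mean dimension reduction subspace for the bounded response $Y_A$, since $\E[Y_A\mid X]=\P(Y\in A\mid X)$. Thus each single instance of the DRS condition is an instance of the MDRS condition to which Lemma \ref{lem1} already applies.

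First I would establish the DRS analogue of Lemma \ref{lem1}: if the restriction of $X$ to a ball $B=B(x_0,r)$ has a strictly positive density and $E,E'$ are two DRS with projections $P,P'$, then the projection $R$ onto $E\cap E'$ satisfies $\P(Y\in A\mid X)\mathds{1}_{\{X\in B\}}=\P(Y\in A\mid RX)\mathds{1}_{\{X\in B\}}$ for every $A$. For a fixed $A$, both $E$ and $E'$ are MDRS for $Y_A$, so the mollification-and-iteration argument in the proof of Lemma \ref{lem1} (applied verbatim with $Y_A$ in place of $Y$, using that $(PP')^n\to R$) yields the equality on $B$ up to a null set $N_A$.

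The hard part is to pass from ``for each fixed $A$'' to ``for all $A$ simultaneously'', because a priori the exceptional null set $N_A$ depends on $A$ and there are uncountably many sets $A$. Here I would use that a regular conditional distribution is determined by its values on a countable generating class: it suffices to verify the equality for $A=(-\infty,q]$, $q\in\Q$ (or, for multivariate or Polish-valued $Y$, for $A$ ranging over a fixed countable generating algebra). Taking $N=\bigcup_{q\in\Q}N_q$, which is still null, off $N$ the conditional c.d.f.\ of $Y$ given $X$ and that of $Y$ given $RX$ agree at every rational, hence, by right-continuity, everywhere; therefore the two conditional distributions coincide and (\ref{drs2}) holds for all measurable $A$ on $B$. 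This shows that $E\cap E'$ is a DRS on $B$.

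It then remains to globalize exactly as in Theorem \ref{CMS}. I would cover the interior $\mathring{F}$ of the support of $X$ by the countable family of balls with rational centre and radius contained in $\mathring{F}$, apply the ball-version above on each such ball, and sum the resulting indicators over this countable family. Since the boundary of the support has Lebesgue measure zero we have $\P(X\in\mathring{F})=1$, so $\sum_{(q,r_0)}\mathds{1}_{\{X\in B(q,r_0)\}}>0$ a.s., which forces $\P(Y\in A\mid X)=\P(Y\in A\mid RX)$ a.s.\ for all $A$; that is, the intersection of any two DRS is again a DRS. Finally, all DRS of minimal dimension share that dimension and their pairwise intersections are DRS of the same minimal dimension, so the minimal DRS is unique and the CS exists. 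The only step demanding care beyond the CMS proof is the reduction to a countable generating class carried out above.
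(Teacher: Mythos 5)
Your proof is correct and rests on exactly the same idea as the paper's: by (\ref{drs2}), a DRS for $Y$ is an MDRS for the indicator response $\mathds{1}_{\{Y\in A\}}$ for every measurable $A$, so the CMS machinery applies. The paper's own proof is nothing but this reduction, stated in three lines: if two distinct minimal DRS existed, each would be an MDRS for the pair $(\mathds{1}_{\{Y\in A\}},X)$, and Theorem \ref{CMS} (more precisely, the fact established in its proof that the intersection of two MDRS is again an MDRS) makes their intersection a DRS of strictly smaller dimension, a contradiction. The one place you diverge is the step you single out as the hard part: uniformizing the exceptional null sets over uncountably many $A$ via rational half-lines and regular conditional distributions. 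That step is correct but dispensable, because property (\ref{drs2}) --- equivalently the conditional independence (\ref{drs}) --- is the statement that \emph{for each} measurable $A$ the equality $\P(Y\in A|X)=\P(Y\in A|RX)$ holds a.s., the null set being allowed to depend on $A$; hence the per-$A$ application of Lemma \ref{lem1} and of the globalization in Theorem \ref{CMS} already shows that the intersection of two DRS is a DRS, with no regular-conditional-distribution argument needed. This is precisely why the paper's proof can be so short; your version proves a slightly stronger (almost-sure, uniform in $A$) conclusion that the corollary does not require.
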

\begin{proof}
Supposed it exist two different DRS with minimal dimension. By equations (\ref{drs2}) and (\ref{mdrs2}), these DRS are MDRS for the random variables $\mathds{1}_{Y\in A}$ and $X$, for any measurable set $A$. Because we can apply Theorem \ref{CMS}, it is impossible.

\end{proof}

\section{Test function methodology and assumptions}\label{s3}
In the previous section, we focused on conditions that guarantee the existence of the CS and the CMS under the respective Assumptions (\ref{drs}) and (\ref{mdrs}). Since TF is only concerned about the CS estimation, we assume from now on that $X$ satisfies the condition of Corollary \ref{CMS}. As it is detailed in the introduction the estimation of the CS raised two kind of conditions. Those that guarantee a characterization of the CS, and those that permit to cover the entire subspace. In this section we are concerned about the first one. Moreover, we explain our next results in a simple way using the standardized covariates. We denote by $d$ the dimension of $E_c$.

\subsection{Order $1$ test function.}\label{basic}
Model (\ref{drs}) implies that all the information detained by $Z$ about $Y$ is carried by $P_cZ$. To find $E_c$, as pointed out by ~\citet*{li1991} and explained in many articles on the subject, a natural idea is to focus on the inverse regression curve $\mathbb{E}[Z|Y]$. Actually if (\ref{drs}) holds, we can write the inverse regression curve as $\mathbb{E}[\mathbb{E}[Z|P_cZ]|Y]$. Clearly, the linearity condition implies that $\E[Z|P_cZ]\in E_c$ and then $\E[Z|Y]$ is with probability $1$ a vector of $E_c$. To our knowledge, all the order $1$ methods target an estimation of the subspace drawn by $\E[Z|Y]$. As described in the introduction, TF1 does not rely on this idea but also requires the linearity condition. Let us have few words about this assumption.
\begin{remark}
The linearity condition is often equated with an assumption of sphericity on the distribution of the predictor. This is well known that if $Z$ is spherical then it satisfies the linearity condition but the converse is false. Actually, linearity condition and sphericity are not so closely related: in \citet*{eaton1986}, it is shown that a random variable $Z$ is spherical if and only if $\mathbb{E}[QZ|PZ]=0$ for every rank $1$ projection $P$ and $Q=I-P$. Clearly, at this stage, the sphericity seems to be a too large restriction to obtain the linearity condition. However unlike the sphericity, since we do not know $P_c$ the linearity condition could not be checked. An assumption closely related to the linearity condition is to ask the distribution of $Z$ to be invariant by the orthogonal symmetry to the space $E_c$, i.e. $Z\overset{d}{=} (2P_c-I)Z$. Then for any measurable function $f$,
\begin{eqnarray*}
\mathbb{E}[Q_cZ f(P_cZ)] = -\mathbb{E}[Q_cZ f(P_cZ)],
\end{eqnarray*}   
which implies the linearity condition. Recalling that sphericity means invariance in distribution by every orthogonal transformation, we have just shown that an invariance in distribution by a particular one suffices to get the linearity condition. Moreover, the assumption of sphericity suffers from the fact that if we add to $Z$ some independent components then, the resulting vector is no longer spherical whereas the linearity condition is still satisfied. 
\end{remark}

A way to introduce TF1 is to consider some relevant facts of the SIR estimation. As explained in the introduction, SIR consists in estimating the matrix 
\begin{equation*}
M_{SIR}=\mathbb{E}\left[Z\mathbb{E}[Z|Y]^T\right].
\end{equation*}
which column space is included in the CS. To make that possible, a slicing approximation of the conditional expectation $\mathbb{E}[Z|Y]$ is conducted and it leads to $\widetilde{M}_{SIR}$ of equation (\ref{uut}). Because $p_h>0$, it is clear that
\begin{equation}\label{span}
\spann(\widetilde{M}_{SIR})=\spann\left(\E[Z\mathds{1}_{\{Y\in I(h)\}}],\ h=1,...,H\right),
\end{equation}
and it follows that SIR estimates a subspace spanned by the covariances between $Z$ and a family of $Y$-measurable functions. The first goal of TF1 is to extend SIR to other families of functions $\Psi_H$ by estimating $E_c$ through $\spann\left(\E[Z\psi(Y)],\ \psi\in \Psi_H\right)$. Moreover, notice that
\begin{equation}\label{msirtilde}
\widetilde{M}_{SIR}=\E\left[Z \left(\psi_{1}(Y),...,\psi_{p}(Y)\right)\right],
\end{equation}
where $\psi_{k}(y) = \sum_h \alpha_{k,h}\mathds{1}_{\{y\in I(h)\}}$ and $\alpha_{k,h}=\mathbb{E}[Z_k |Y\in I(h)]$. It follows from (\ref{msirtilde}) and (\ref{span}) that
\begin{equation*}
\spann\left(\E[Z\mathds{1}_{\{Y\in I(h)\}}],\ h=1,...,H\right)=\spann\left(\E[Z \psi_{k}(Y)],\ k=1,...,p\right),
\end{equation*}
and clearly SIR synthesizes the information contains in a subspace generated by $H$ vectors into one generated by $p$ vectors. Although each of these spaces are equal, it is not the case for their respective estimators. Accordingly, another issue for TF1 is to choose the $p$ functions $\psi_{k}$ in order to minimize the variance of the estimator. 

The following theorem is not really new. Yet, it makes a simple link between TF1 and the CS. An important fact is that Theorem \ref{th1} provides a vector in $E_c$ for every measurable function.  

\begin{theorem}\label{th1}
Assume that $Z$ satisfies Assumption \ref{linearity} and has a finite first moment. Then, for every measurable function $\psi:\mathbb{R}\rightarrow \mathbb{R}$ such that $\E[Z\psi(Y)]<\infty$, we have
\begin{equation*}
\mathbb{E}[Z\psi(Y)]\in E_c.
\end{equation*}
\end{theorem}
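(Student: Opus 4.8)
The plan is to reduce the statement to the single fact that the inverse regression curve lies in the central subspace, i.e. $\E[Z\mid Y]\in E_c$ almost surely, and then integrate against $\psi$. Since $E_c=\{v\in\R^p : Q_c v=0\}$, membership in $E_c$ is equivalent to being annihilated by $Q_c$, so the whole proof amounts to checking that $Q_c\,\E[Z\psi(Y)]=0$.

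First I would move the deterministic matrix $Q_c$ inside the expectation and condition on $Y$, writing
\begin{equation*}
Q_c\,\E[Z\psi(Y)] = \E[\,Q_c Z\,\psi(Y)\,] = \E\big[\,\psi(Y)\,\E[Q_c Z\mid Y]\,\big] = \E\big[\,\psi(Y)\,Q_c\E[Z\mid Y]\,\big].
\end{equation*}
This is legitimate because the finite–first–moment hypothesis on $Z$ together with the integrability assumption let me use the tower property and pull the $\sigma(Y)$-measurable factor $\psi(Y)$ out of the inner conditional expectation. The display shows that it suffices to prove $Q_c\,\E[Z\mid Y]=0$ a.s.

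Next I would exploit the two structural hypotheses. The defining conditional independence of the central subspace, read in standardized coordinates, is $Y\indep Z\mid P_cZ$; consequently conditioning additionally on $Y$ does not change the law of $Z$ given $P_cZ$, so $\E[Z\mid P_cZ,Y]=\E[Z\mid P_cZ]$. Applying the tower property through the finer conditioning on $(P_cZ,Y)$ then gives
\begin{equation*}
\E[Z\mid Y]=\E\big[\,\E[Z\mid P_cZ,Y]\mid Y\,\big]=\E\big[\,\E[Z\mid P_cZ]\mid Y\,\big].
\end{equation*}
Now the linearity condition (Assumption \ref{linearity}) says precisely that $Q_c\E[Z\mid P_cZ]=0$ a.s., so multiplying the last identity by $Q_c$ and taking it inside the outer expectation yields $Q_c\E[Z\mid Y]=\E[\,Q_c\E[Z\mid P_cZ]\mid Y\,]=0$ a.s. Combining with the first step gives $Q_c\E[Z\psi(Y)]=0$, hence $\E[Z\psi(Y)]\in E_c$.

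I expect the only genuinely delicate point to be the identity $\E[Z\mid P_cZ,Y]=\E[Z\mid P_cZ]$: this is where the conditional independence $Y\indep Z\mid P_cZ$ is actually used, and one must also verify that the integrability assumption $\E[Z\psi(Y)]<\infty$, combined with the finite first moment of $Z$, suffices to justify all the interchanges of $Q_c$, the conditional expectations, and the multiplication by $\psi(Y)$. Everything else is bookkeeping with the tower property and the elementary observation that $E_c$, being a linear subspace, equals the kernel of $Q_c$ and is therefore stable under integration of a random vector that lies in it almost surely.
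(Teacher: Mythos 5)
Your proof is correct and follows essentially the same route as the paper's: the conditional independence defining the CS gives $\E[Z\mid P_cZ,Y]=\E[Z\mid P_cZ]$, and the linearity condition then annihilates the $Q_c$ component. The only cosmetic difference is that you pass through the intermediate fact $Q_c\E[Z\mid Y]=0$ a.s.\ (which the paper itself states in the discussion preceding the theorem), whereas the paper's proof writes $\E[Z\psi(Y)]=\E\left[\E[Z|P_cZ]\psi(Y)\right]$ directly and applies $Q_c$.
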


\begin{proof}
Thanks to the existence of the CS, $\mathbb{E}[Z\psi(Y)]=\mathbb{E}\left[\mathbb{E}[Z|P_cZ]\psi(Y)\right]$, and thanks to the linearity condition, $Q_c\mathbb{E}[Z\psi(Y)] =0$.
\end{proof}

\subsection{Order 2 test function.}
TF2 relies exactly on the same approach than TF1 with the difference that it involves higher conditional moments of $Z$ knowing $Y$. Indeed, we are interested in the space generated by the columns of the matrix $\E[ZZ^T\psi(Y)]$ where $\psi$ denote a measurable function. The same issues are addressed : many functions $\psi$ are considered in a first time, and then we look for an optimal function.

Let us recall a known fact often presented as the SIR pathology. Consider the regression model
\begin{equation}\label{patho}
Y=g(Z_1,Z_2,\varepsilon),
\end{equation}
where $\varepsilon\indep Z\in \R^p$ and $g$ is symmetric with respect to its first coordinate. Assume also that $(Z_1,Z_2)\overset{\text{d}}{=} (-Z_1,Z_2)$. Then thanks to the linearity condition we have $ Q_c \E[Z\psi(Y)]=0$ whereas the previous assumptions clearly imply that $\E[Z_1\psi(Y)]=\E[-Z_1\psi(Y)]$. Therefore for any measurable function $\psi$, we have that  $\E[Z\psi(Y)]=\E[(0,Z_2,0,...,0)^T\psi(Y)]$ and consequently the first direction $(1,0,...,0)^T$ cannot be reached by any method based on the inverse regression curve. Clearly, TF1 is sensitive to the SIR pathology. Facing this difficulty an idea developed first in~\citet*{li1991} and ~\citet*{cook1991} is to explore some higher conditional moments of $Z$ given $Y$. Thus methods as SIR-II, SAVE, CR, or DR are interested in some properties of the matrix $\mathbb{E}[ZZ^T|Y]$. It is also the case for TF2. Nevertheless we do not follow the same path specially concerning the assumptions required to explore this second order moment. These kind of method assume first that $Z$ has an elliptical distribution or at least satisfies the linearity condition, and secondly that $\var(Z|P_cZ)$ is a constant, i.e. CCV. The following proposition shows how strong are the last two assumptions.
\begin{proposition}\label{bryc}
Let $Z$ be a random vector of $\mathbb{R}^p$ ($p\geq 2$) with a finite second order moment. If $Z$ is spherical and if $\var(Z|PZ)=const.$ for some orthogonal projection $P$ , then $Z$ is normal and conversely. 
\end{proposition}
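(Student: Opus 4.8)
The converse is the easy direction: if $Z\sim\mathcal N(0,cI)$ for some $c>0$, then its density depends on $z$ only through $|z|$, so $Z$ is spherical, and for any projection $P$ the blocks $PZ$ and $QZ$ (with $Q=I-P$) are uncorrelated Gaussians, hence independent, so $\var(Z|PZ)=cQ$ does not depend on $PZ$. I will therefore concentrate on the forward implication, and I may assume $P$ is a proper projection, $0<\rank P<p$ (for $P=0$ or $P=I$ the hypothesis $\var(Z|PZ)=const.$ is vacuous, so it cannot force normality).

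First I would use sphericity to put the problem in convenient form. Writing $\phi(t)=\E[e^{i\langle t,Z\rangle}]$, sphericity means $\phi(t)=g(|t|^2)$ for a real function $g$ on $[0,\infty)$ with $g(0)=1$, and finiteness of the second moment guarantees that $\phi$, hence $g$ on $(0,\infty)$, is of class $C^2$. Set $S=PZ$ and $W=QZ$, living in orthogonal subspaces of dimensions $k=\rank P$ and $p-k\ge 1$; in coordinates adapted to $P$ the joint characteristic function is $\Phi(\sigma,\tau)=\E[e^{i\langle\sigma,S\rangle+i\langle\tau,W\rangle}]=g(|\sigma|^2+|\tau|^2)$.

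Next I would read off what the hypothesis says. Applying to $Z$ a rotation that fixes the range of $P$ and rotates its orthogonal complement leaves the law of $Z$ unchanged, so conditionally on $S$ the vector $W$ is spherically symmetric in $\R^{p-k}$; consequently $\E[W|S]=0$ and $\var(W|S)=v(S)I_{p-k}$ for a scalar $v$. Since $PZ$ is known given $PZ$, the $P$-block of $\var(Z|PZ)$ vanishes, whence $\var(Z|PZ)=v(S)Q$, and the hypothesis is exactly that $v(S)$, equivalently $\E[|W|^2|S]$, is almost surely constant, say $\E[|W|^2|S]=(p-k)c$. Testing this against $e^{i\langle\sigma,S\rangle}$ gives
\[
\E\big[|W|^2e^{i\langle\sigma,S\rangle}\big]=(p-k)c\,\E\big[e^{i\langle\sigma,S\rangle}\big]=(p-k)c\,g(|\sigma|^2),
\]
while differentiating $\Phi$ in $\tau$ gives $\E[|W|^2e^{i\langle\sigma,S\rangle}]=-\Delta_\tau\Phi(\sigma,\tau)\big|_{\tau=0}$, and since $\Phi=g(|\sigma|^2+|\tau|^2)$ the Laplacian in the $p-k$ variables $\tau$ at $\tau=0$ equals $2(p-k)g'(|\sigma|^2)$ (the $g''$ terms carry a factor $\tau_j^2$ and vanish).

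Combining the two and writing $w=|\sigma|^2$, the entire hypothesis collapses to the ordinary differential equation $-2g'(w)=c\,g(w)$ on $(0,\infty)$; with $g(0)=1$ this forces $g(w)=e^{-cw/2}$, i.e. $\phi(t)=e^{-c|t|^2/2}$, the characteristic function of $\mathcal N(0,cI)$, so $Z$ is normal by uniqueness of characteristic functions. The two points I would treat carefully are (i) that $g$ is genuinely differentiable as a function of $w=|t|^2$ — here finiteness of the second moment and smoothness of $t\mapsto|t|^2$ away from the origin give $g\in C^2((0,\infty))$, the value at $w=0$ being recovered by continuity of the ODE solution — and (ii) the interchange of expectation and $\tau$-differentiation defining $\Delta_\tau\Phi$, again licensed by the finite second moment. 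I expect step (i), pinning down the regularity of $g$ so that the ODE is valid up to $w=0$, to be the main obstacle; the rest is a direct computation.
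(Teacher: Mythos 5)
Your proof is correct, and it takes a genuinely different route from the paper: the paper's entire proof of this proposition is a citation of Theorem 4.1.4, p.~48 of \citet*{bryc1995}, which is essentially the characterization you establish, whereas you give a self-contained argument. Your chain of reasoning --- sphericity gives $\phi(t)=g(|t|^2)$; block rotations fixing $\im(P)$ give the conditional symmetry $\E[QZ\,|\,PZ]=0$ and $\var(QZ\,|\,PZ)=v(PZ)I$; testing the constancy of $\E[\|QZ\|^2\,|\,PZ]$ against $e^{i\langle\sigma,PZ\rangle}$ and computing the same quantity as $-\Delta_\tau\Phi|_{\tau=0}=-2(p-k)\,g'(|\sigma|^2)$ yields the ODE $-2g'=cg$, hence $g(w)=e^{-cw/2}$ --- is sound, and the two analytic points you flag (regularity of $g$ on $(0,\infty)$ plus continuity at $0$, and differentiation under the expectation) are exactly the ones needing care; both are covered by the finite second moment, since $\phi\in C^2$ and $w\mapsto\sqrt{w}$ is smooth away from the origin. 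What the citation buys is brevity; what your argument buys is self-containedness and, more substantively, it exposes a hypothesis that the statement leaves implicit: the forward implication is false for the trivial projections $P=0$ and $P=I$ (any spherical law with finite second moment satisfies $\var(Z|PZ)=const.$ vacuously in those cases), so one must require $0<\rank P<p$, as you note at the outset --- in the paper's intended application $P=P_c$ with $0<d<p$, so this is harmless but worth recording. Two cosmetic remarks: your conclusion is $Z\sim\mathcal{N}(0,cI)$, i.e.\ ``normal'' in the statement must be read as spherical normal, which is the intended reading for standardized $Z$; and in the degenerate case $c=0$ your ODE forces $g\equiv 1$, i.e.\ $Z=0$ a.s., which one either excludes or accepts as a degenerate Gaussian.
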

\begin{proof}
This proposition follows from Theorem 4.1.4, p.48 of \citet*{bryc1995}.  
\end{proof} 

Accordingly, assumptions required for order $2$ methods are realy close to the assumption of normality on the distribution of the predictors. TF2 works under weaker conditions. Actually, the CCV condition is no longer needed and we substitute it by the following assumption. 

\begin{assumption}{(Diagonal conditional variance (DCV))}\label{variance}
\begin{equation*}
\var(Z|P_cZ)=\lambda^{\ast}_{\omega} Q_c\quad \text{a.s.,}
\end{equation*}
with $\lambda^{\ast}_{\omega}$ a real random variable.
\end{assumption}
In Remark \ref{ccvdcv} we attempt to compare CCV and DCV. To facilitate futures proofs and for a better understanding of such a condition we provide an equivalent form in the following lemma. 
\begin{lemma}\label{lemma}
Assume that $Z$ has a finite second moment. Then the following assertions are equivalent,
\begin{enumerate}
\itemsep 8pt
\item \label{e1} for any orthogonal transformation $H$ such that $HP_c=P_c$, we have
\begin{equation*}
\var(Z|P_cZ)=\var(HZ|P_cZ),
\end{equation*}  
\item \label{e2} $\var(Z|P_cZ)=\lambda^{\ast}_{\omega}Q_c$ with $\lambda^{\ast}_{\omega}$ a real random variable.
\end{enumerate}
Moreover, under the linearity condition necessarily $\lambda^{\ast}_{\omega}=\frac{1}{p-d}\mathbb{E}\left[\|Q_cZ\|^2|P_cZ\right]$.
\end{lemma}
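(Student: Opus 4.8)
The plan is to fix $\omega$ and argue in terms of the random symmetric matrix $V:=\var(Z\,|\,P_cZ)$, translating each assertion into linear algebra on $V$. I would first record a structural fact that uses only the finite second moment. Since $P_cZ$ is $\sigma(P_cZ)$-measurable, $P_c\,\E[Z|P_cZ]=P_cZ$, whence $P_c\bigl(Z-\E[Z|P_cZ]\bigr)=0$ a.s., and therefore
\begin{equation*}
P_cV=VP_c=0,
\end{equation*}
so that $V$ is carried by $E_c^\perp=\im(Q_c)$ and may be viewed as a symmetric operator on this $(p-d)$-dimensional space. Next, because $H$ is a deterministic linear map, $\var(HZ|P_cZ)=HVH^T$, so assertion (\ref{e1}) says exactly that $HVH^T=V$ for every orthogonal $H$ with $HP_c=P_c$. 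The constraint $HP_c=P_c$ is equivalent to $H$ being the identity on $E_c$; combined with orthogonality this forces $H$ to preserve $E_c^\perp$ and to restrict there to an arbitrary element of $O(p-d)$, while conversely any orthogonal transformation of $E_c^\perp$ extends, by the identity on $E_c$, to an admissible $H$.

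The heart of the argument is then the claim that a symmetric operator on $E_c^\perp$ commuting with all of $O(p-d)$ must be scalar. I would prove this elementarily: choose an orthonormal basis of $E_c^\perp$; the reflection sending one basis vector to its opposite and fixing the others (and fixing $E_c$) conjugates $V$ by changing the sign of the off-diagonal entries in the corresponding row and column, which forces those entries to vanish, so $V$ is diagonal in this basis; the transposition exchanging two basis vectors then swaps the two corresponding diagonal entries, forcing them to be equal. Hence $V\big|_{E_c^\perp}=\lambda^\ast_\omega I$ for a scalar $\lambda^\ast_\omega$, i.e. $V=\lambda^\ast_\omega Q_c$, which is (\ref{e2}); note that $\lambda^\ast_\omega=\tfrac{1}{p-d}\tr V$ is automatically $\sigma(P_cZ)$-measurable. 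Only finitely many transformations $H$ enter here, so the a.s. identities hold simultaneously off a single null set, which disposes of the measure-theoretic bookkeeping. (Alternatively one may invoke Schur's lemma for the absolutely irreducible standard representation of $O(p-d)$.)

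The converse (\ref{e2})$\Rightarrow$(\ref{e1}) is immediate: $HP_c=P_c$ gives $P_cH^T=P_c$ and hence $HQ_cH^T=H(I-P_c)H^T=I-P_c=Q_c$, so $HVH^T=\lambda^\ast_\omega Q_c=V$. Finally, for the explicit form of $\lambda^\ast_\omega$ under the linearity condition I would take traces. On one hand $\tr V=\lambda^\ast_\omega\tr Q_c=(p-d)\lambda^\ast_\omega$; on the other, writing $\mu:=\E[Z|P_cZ]$, we have $\tr V=\E[\|Z-\mu\|^2\,|\,P_cZ]$. The linearity condition gives $Q_c\mu=0$, so $\mu=P_c\mu=P_cZ$ and $Z-\mu=Q_cZ$, whence $\tr V=\E[\|Q_cZ\|^2\,|\,P_cZ]$ and $\lambda^\ast_\omega=\tfrac{1}{p-d}\E[\|Q_cZ\|^2\,|\,P_cZ]$. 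The one genuinely non-routine step is the commutation argument of the second paragraph; everything else is bookkeeping.
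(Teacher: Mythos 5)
Your proof is correct, and it rests on the same basic strategy as the paper's: the direction (\ref{e2})$\Rightarrow$(\ref{e1}) is proved identically (via $HQ_cH^T=Q_c$), and (\ref{e1})$\Rightarrow$(\ref{e2}) is obtained in both cases by testing the invariance against reflections fixing $E_c$. The differences are in execution, and they mostly favor your version. The paper tests against the rank-one reflections $H=I-2\gamma\gamma^T$ for \emph{every} unit $\gamma\in E_c^{\perp}$, concluding that every such $\gamma$ is an eigenvector of $V=\var(Z|P_cZ)$ and hence that $E_c^{\perp}$ is an eigenspace; since this invokes uncountably many $H$, each carrying its own exceptional null set, a fully rigorous rendering needs a reduction to countably (or finitely) many $\gamma$ plus a continuity argument, a point the paper passes over. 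Your route first isolates $V$ on $E_c^{\perp}$ through the structural identity $P_cV=VP_c=0$ (which the paper only uses implicitly, at the very end, when writing $\var(Z|P_cZ)=\var(Q_cZ|P_cZ)$), and then needs only finitely many coordinate sign-flips and transpositions, so the simultaneous-null-set issue disappears, as you explicitly note. Finally, for the value of $\lambda^{\ast}_{\omega}$, the paper evaluates $\E[(\gamma^TZ)^2|P_cZ]$ at the single vector $\gamma=(p-d)^{-1/2}\sum_k\gamma_k$, which tacitly requires the conditional cross-moments $\E[(\gamma_j^TZ)(\gamma_k^TZ)|P_cZ]$, $j\neq k$, to vanish (true under DCV and the linearity condition, but unstated); your trace computation reaches the same formula without touching cross terms. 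So: same skeleton, but your block reduction, your finite family of test transformations, and your trace argument make the proof tighter at exactly the places where the paper is informal.
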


\begin{proof}
Let us begin by the easiest way : (\ref{e2}) $\Rightarrow$ (\ref{e1}). Let $H$ be any orthonormal matrix as described in (\ref{e1}). Because $HQ_cH^T=I-HP_cH^T=Q_c$, by multiplying (\ref{e2}) on the left side by $H$ and on the right side by $H^T$, we find that
\begin{equation*}
\var(HZ|P_cZ)=\lambda^{\ast}_{\omega}Q_c=\var(Z|P_cZ).
\end{equation*}

The other way is based on a good choice of the matrix $H$. Let $\gamma$ be a unit vector of $E_c^{\perp}$, and define $H=I-2\gamma \gamma^T$. Clearly, $H$ is symmetric and satisfies to the requirement of (\ref{e1}). So that, we have the equation
\begin{equation*}
\var(Z|P_cZ)=(I-2\gamma \gamma^T)\var(Z|P_cZ)(I-2\gamma \gamma^T),
\end{equation*}   
developing the right hand side, it follows that
\begin{equation*}
\var(Z|P_cZ)\gamma \gamma^T= 2 \var(\gamma^T Z|P_cZ)\gamma\gamma^T-\gamma\gamma^T\var(Z|P_cZ),
\end{equation*}
and finally, multiplying by $\gamma$ on the right, we find
\begin{equation}\label{lambda}
\var(Z|P_cZ)\gamma=\var(\gamma^TZ|P_cZ)\gamma.
\end{equation}
Therefore, any $\gamma\in E_c^{\perp}$ is an eigenvector of $\var(Z|P_cZ)$ and thus, $E_c$ is an eigenspace of this matrix. Denote by $\lambda^{\ast}_{\omega}$ the eigenvalue associated to $E_c^{\perp}$. Since the columns of $Q_c$ are vectors of $E_c^{\perp}$, we have
\begin{equation*}
\var(Z|P_cZ)Q_c=\lambda^{\ast}_{\omega}Q_c,
\end{equation*} 
which implies that
\begin{equation*}
\var(Z|P_cZ)=\var(Q_cZ|P_cZ)=\lambda^{\ast}_{\omega}Q_c,
\end{equation*}
and (\ref{e1}) $\Rightarrow$ (\ref{e2}) is completed.

The value of $\lambda^{\ast}_{\omega}$ can be given by equation (\ref{lambda}). Clearly, under the linearity condition we have for every unit vector $\gamma\in E_c^{\perp}$, 
\begin{equation*}
\lambda^{\ast}_{\omega}=\var(\gamma^TZ|P_cZ)=\E[(\gamma^TZ)^2|P_cZ],
\end{equation*}
and hence it suffices to take $\gamma=\frac{1}{\sqrt{p-d}}\sum_{k=1}^{p-d} \gamma_k $ where $(\gamma_1,...,\gamma_{p-d})$ is an orthonormal basis of $E_c^{\perp}$, to obtain
\begin{equation*}
\lambda^{\ast}_{\omega}=\frac{1}{p-d}\mathbb{E}\left[\|Q_cZ\|^2|P_cZ\right].
\end{equation*}
\end{proof}

\begin{remark}\label{ccvdcv}
Here we compare CCV and DCV. Each existing method being based on close but sometimes different assumptions, it is difficult to build a complete sketch of the assumption sets used. Let us have a look to the interaction with the spherical assumption. First, Proposition \ref{bryc} informs us that coupling the CCV condition and the spherical assumption is equivalent to normality. But in our case, the sphericity implies DCV. Indeed, if $Z$ is spherical, then its distribution is invariant by any orthogonal transformation, and we have for any measurable function $f$ and for any orthogonal matrix $H$,
\begin{equation*}
\mathbb{E}[ZZ^T f(P_cZ)]=\mathbb{E}[HZZ^TH^Tf(P_cHZ)].
\end{equation*} 
In particular, the previous equation is true for any $H$ which leaves invariant vectors of $E_c$ and we obtain (\ref{e1}) of Lemma \ref{lemma} which is equivalent to DCV. Thus, we have just proved that the spherical assumption implies DCV.
\end{remark}

Theorem \ref{th2} is the analogue of Theorem \ref{th1} for TF2. 

\begin{theorem}\label{th2}
Define the matrix $M_{\psi}=\E[ZZ^T\psi(Y)]$. Assume that $Z$ satisfies Assumptions \ref{linearity} and \ref{variance} and has a finite second moment. Then, for every measurable function $\psi:\mathbb{R}\rightarrow \mathbb{R}$ such that $\E[ZZ^T\psi(Y)]<\infty$, we have
\begin{equation*}
\spann(M_{\psi}-\lambda_{\psi}^{\ast}I)\subset E_c,
\end{equation*}
with $\lambda_{\psi}^{\ast}=\frac{1}{p-d}\mathbb{E}\left[\|Q_cZ\|^2\psi(Y)\right]$.
\end{theorem}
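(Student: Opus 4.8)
The plan is to prove the pointwise statement that $(M_{\psi}-\lambda^{\ast}_{\psi}I)\gamma=0$ for every $\gamma\in E_c^{\perp}$. Since $M_{\psi}$, and hence $M_{\psi}-\lambda^{\ast}_{\psi}I$, is symmetric, this is equivalent to $Q_c(M_{\psi}-\lambda^{\ast}_{\psi}I)=0$, which says exactly that every column of $M_{\psi}-\lambda^{\ast}_{\psi}I$ lies in $E_c$, i.e. $\spann(M_{\psi}-\lambda^{\ast}_{\psi}I)\subset E_c$. So I would fix a unit vector $\gamma\in E_c^{\perp}$ and aim to show $M_{\psi}\gamma=\lambda^{\ast}_{\psi}\gamma$.

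First I would rewrite $M_{\psi}\gamma=\E[Z(\gamma^TZ)\psi(Y)]$ and use that $\gamma\in E_c^{\perp}$ gives $\gamma^TZ=\gamma^TQ_cZ$. Splitting $Z=P_cZ+Q_cZ$ then produces
\begin{equation*}
M_{\psi}\gamma=\E[P_cZ\,(\gamma^TQ_cZ)\,\psi(Y)]+\E[Q_cZ\,(\gamma^TQ_cZ)\,\psi(Y)].
\end{equation*}
The whole argument now runs by conditioning on $P_cZ$ and using that the existence of the CS gives $Y\indep Z\,|\,P_cZ$, hence $Y\indep Q_cZ\,|\,P_cZ$; thus, conditionally on $P_cZ$, any function of $Q_cZ$ is independent of $\psi(Y)$.

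For the first term, conditioning factorizes $\E[\gamma^TQ_cZ\,\psi(Y)\,|\,P_cZ]=\gamma^T\E[Q_cZ|P_cZ]\,\E[\psi(Y)|P_cZ]$, and the linearity condition $\E[Q_cZ|P_cZ]=Q_c\E[Z|P_cZ]=0$ makes it vanish. For the second term, writing $Q_cZ(\gamma^TQ_cZ)=Q_cZ(Q_cZ)^T\gamma$ and conditioning, the conditional independence gives $\E[Q_cZ(Q_cZ)^T\gamma\,\psi(Y)|P_cZ]=\E[Q_cZ(Q_cZ)^T|P_cZ]\gamma\,\E[\psi(Y)|P_cZ]$. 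Since $\E[Q_cZ|P_cZ]=0$ the inner expectation is $\var(Q_cZ|P_cZ)=\var(Z|P_cZ)$, which DCV (Assumption \ref{variance}) identifies with $\lambda^{\ast}_{\omega}Q_c$; as $Q_c\gamma=\gamma$ this collapses to $\lambda^{\ast}_{\omega}\gamma\,\E[\psi(Y)|P_cZ]$.

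Taking the outer expectation leaves $M_{\psi}\gamma=\gamma\,\E[\lambda^{\ast}_{\omega}\,\E[\psi(Y)|P_cZ]]$, so it only remains to identify the scalar with $\lambda^{\ast}_{\psi}$. Substituting $\lambda^{\ast}_{\omega}=\tfrac{1}{p-d}\E[\|Q_cZ\|^2|P_cZ]$ from Lemma \ref{lemma} and invoking the conditional independence one last time to recombine $\E[\|Q_cZ\|^2|P_cZ]\,\E[\psi(Y)|P_cZ]=\E[\|Q_cZ\|^2\psi(Y)|P_cZ]$, the tower property yields exactly $\tfrac{1}{p-d}\E[\|Q_cZ\|^2\psi(Y)]=\lambda^{\ast}_{\psi}$. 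The only real care needed throughout is the repeated, and legitimate, use of the factorization of conditional expectations coming from $Y\indep Q_cZ\,|\,P_cZ$; that is the step where the structural hypotheses (DRS, linearity, DCV) all enter, and making the cross term vanish while the diagonal term reproduces $\lambda^{\ast}_{\psi}$ is where I would expect any slips to occur.
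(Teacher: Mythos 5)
Your proposal is correct and takes essentially the same approach as the paper: both arguments reduce the claim to showing $M_{\psi}\gamma=\lambda_{\psi}^{\ast}\gamma$ for every $\gamma\in E_c^{\perp}$, and both establish this by conditioning on $P_cZ$ --- using the conditional independence $Y\indep Z\,|\,P_cZ$ granted by the existence of the CS, the linearity condition to kill the cross term, DCV to identify the $Q_c$-block, and Lemma \ref{lemma} to match $\E[\lambda^{\ast}_{\omega}\psi(Y)]$ with $\lambda_{\psi}^{\ast}$. The paper merely packages the same computation as the matrix identity $\E[ZZ^T|P_cZ]=\lambda^{\ast}_{\omega}Q_c+P_cZZ^TP_c$ before contracting with $\gamma$, whereas you contract with $\gamma$ first and unpack the conditional expectations explicitly.
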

\begin{proof}
To make a complete proof, we need to show that all the vectors in $E_c^{\perp}$ are eigenvectors of the symmetric matrix $M_{\psi}-\lambda_{\psi}^{\ast} I$ associated to the eigenvalue $0$. The existence of the CS ensures that
\begin{equation*}
M_{\psi}-\lambda_{\psi}^{\ast}I=\mathbb{E}[(\mathbb{E}[ZZ^T|P_cZ]-\lambda^{\ast}_{\omega}I)\psi(Y)],
\end{equation*}
besides, thanks to the linearity condition and DCV, we have
\begin{equation*}
\mathbb{E}[ZZ^T|P_cZ]= \lambda^{\ast}_{\omega}Q_c+P_cZZ^TP_c.
\end{equation*}
Thus, for any $\gamma\in E_c^{\perp}$ we have $(M_{\psi}-\lambda_{\psi}^{\ast}I) \gamma=0$ and the proof is completed.
\end{proof}

In practice, because $\lambda_{\psi}^{\ast}$ is unknown, it seems difficult to use Theorem \ref{th2}. Nevertheless, we do not really need to know this particular eigenvalue because a consequence of Theorem \ref{th2} is that $E_c^{\perp}$ is an eigenspace of the matrix $M_{\psi}$ associated to the eigenvalue equal to $\lambda_{\psi}^{\ast}$. Therefore, if the dimension of $E_c^{\perp}$ is large, then the spectrum of $M_{\psi}$ would have an accumulation of eigenvalues equal to $\lambda_{\psi}^{\ast}$. What we expect is that the other eigenvalues will be different from $\lambda_{\psi}^{\ast}$. If it is true, all the directions of $E_c$ could be recovered and this eigenvalue problem is the topic of the next section.

\section{Covering the central subspace}\label{s4}
In this section, we find that a way to obtain an exhaustive characterization of the CS for TF1 and TF2 is to consider many $\psi$ function. As usual, we begin with TF1 and conclude by TF2.

\subsection{\textbf{Order $1$ test function}}
As a consequence of Theorem \ref{th1}, spaces generated by $(\E[Z\psi_{1}],...,\E[Z\psi_{k}])$ are included in $E_c$. Our goal is to obtain the converse inclusion. Because TF1 is an extending of SIR, this one has a central place in the following argumentation. We start by giving a necessary and sufficient condition for covering the entire CS with SIR. Then under the same condition we extend SIR to a new class of methods. 
   
\begin{assumption}\label{ft1}
For every nonzero vector $\eta\in E_c$, $\E[\eta^TZ|Y]$ has a nonzero variance.
\end{assumption}

Equation (\ref{patho}) provides a regression model for which a direction of $E_c$ is almost surely orthogonal to $\E[Z|Y]$. It is clear that this kind of situation is no longer allowed by the previous assumption. However, TF2 is designed to handle such pathological cases.

\begin{lemma}\label{coversir}
If $Z$ satisfies Assumption \ref{linearity} and has a finite second moment, then Assumption \ref{ft1} implies that $\spann(M_{SIR})=E_c$ and conversely.
\end{lemma}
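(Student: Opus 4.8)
The plan is to exploit the fact that $M_{SIR}=\E[Z\,\E[Z|Y]^T]$ is, up to the centering of $Z$, the covariance matrix of the inverse regression curve. First I would apply the tower property to rewrite
\begin{equation*}
M_{SIR}=\E\left[\E[Z|Y]\,\E[Z|Y]^T\right],
\end{equation*}
which is symmetric and positive semidefinite. Since $Z$ is standardized, $\E[\E[Z|Y]]=\E[Z]=0$, so for any vector $\eta$ one has the scalar identity $\eta^T M_{SIR}\eta=\var(\E[\eta^TZ|Y])$. Theorem \ref{th1} already delivers $\spann(M_{SIR})\subseteq E_c$ (each column is of the form $\E[Z\psi(Y)]$ with $\psi(Y)=\E[Z_k|Y]$), so only the reverse inclusion, i.e. that $M_{SIR}$ attains full rank $d$ on $E_c$, remains to be characterized.

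Next I would convert the span equality into a statement about the kernel. Because $M_{SIR}$ is symmetric, $\ker(M_{SIR})=\spann(M_{SIR})^{\perp}$, and the inclusion $\spann(M_{SIR})\subseteq E_c$ forces $E_c^{\perp}\subseteq\ker(M_{SIR})$. Hence $\spann(M_{SIR})=E_c$ holds if and only if $\ker(M_{SIR})\cap E_c=\{0\}$, that is, no nonzero direction of $E_c$ is annihilated by $M_{SIR}$. This is the pivotal reduction: it replaces a question about the range of $M_{SIR}$ by a clean question about which directions of $E_c$ sit in its kernel.

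Finally, invoking positive semidefiniteness, for $\eta\in E_c$ one has $M_{SIR}\eta=0$ if and only if $\eta^T M_{SIR}\eta=0$, which by the identity of the first paragraph is equivalent to $\var(\E[\eta^TZ|Y])=0$. Therefore $\ker(M_{SIR})\cap E_c=\{0\}$ is precisely the statement that $\var(\E[\eta^TZ|Y])\neq0$ for every nonzero $\eta\in E_c$, namely Assumption \ref{ft1}. This single equivalence settles both implications: under the linearity condition (used only to secure $\spann(M_{SIR})\subseteq E_c$), Assumption \ref{ft1} forces the kernel of $M_{SIR}$ to meet $E_c$ trivially, hence $\spann(M_{SIR})=E_c$; conversely, $\spann(M_{SIR})=E_c$ forbids any nonzero $\eta\in E_c$ with vanishing variance. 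The only step requiring care is the passage from $M_{SIR}\eta=0$ to the scalar variance condition, which is exactly where positive semidefiniteness is indispensable; the remainder is linear algebra combined with the already-established inclusion, so I expect no serious obstacle beyond correctly setting up that PSD argument.
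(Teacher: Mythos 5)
Your proof is correct and follows essentially the same route as the paper's: the paper's one-line argument asserts precisely that, under the linearity condition, $\spann(M_{SIR})=E_c$ is equivalent to $\eta^T M_{SIR}\,\eta>0$ for every nonzero $\eta\in E_c$, which is a reformulation of Assumption \ref{ft1}. You have simply made explicit the ingredients the paper leaves implicit (the tower-property identity $\eta^T M_{SIR}\eta=\var(\E[\eta^TZ|Y])$, the inclusion $\spann(M_{SIR})\subseteq E_c$ from Theorem \ref{th1}, and the kernel argument for positive semidefinite symmetric matrices), so there is nothing to correct.
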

\begin{proof}
Under the linearity condition, $\spann(M_{SIR})=E_c$ is equivalent to $\eta^T M_{SIR}\ \eta >0$ for every $\eta\in E_c$, which is another formulation of Assumption (\ref{ft1}). 
\end{proof}

We now extend Lemma \ref{coversir} to TF1, the aim is to provide the same results replacing the conditional expectation $\E[Z|Y]$ in $M_{SIR}$ by some known family of functions. To state the following theorem, we introduce the function space $L_1\left(\theta(y)\mu(\dd y)\right)$ defined as 
\begin{equation*}
L_1\left(\theta(y)\mu(\dd y)\right)=\{ u:\mathbb{R}\rightarrow \mathbb{R} ; \int_{\mathbb{R}} |u(y)|\theta(y)\mu(\dd y)<+\infty \},
\end{equation*} 
where $\theta:\mathbb{R}\rightarrow \R_+$ and $\mu$ a real measure.

\begin{theorem}\label{anypsi}
Assume that $Z$ and $Y$ satisfy Assumptions \ref{linearity} and \ref{ft1}. Assume also that $Z$ has a finite second moment. If $\Psi$ is a total countable family in the space $L_1(\mathbb{E}[\|Z\||Y=y] P_Y(dy))$, then we can extract a finite subset $\Psi_H$ of $\Psi$ such that 
\begin{equation*}
\spann \left(\E[Z\psi(Y)],\ \psi\in \Psi_H\right)=E_c.
\end{equation*}
\end{theorem}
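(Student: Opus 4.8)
The plan is to establish the two inclusions separately and then pass from the full family $\Psi$ to a finite subfamily by a finite-dimensionality argument. One inclusion is immediate: by Theorem \ref{th1} every vector $\E[Z\psi(Y)]$ already lies in $E_c$, so any linear combination does too, giving $\spann(\E[Z\psi(Y)],\ \psi\in\Psi)\subset E_c$.

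For the reverse inclusion I would argue by contradiction through the orthogonal complement. Write $m(y)=\E[Z\mid Y=y]$, which lies in $E_c$ by the linearity condition, and set $\theta(y)=\E[\|Z\|\mid Y=y]$. Suppose the span were a proper subspace of $E_c$; then there is a nonzero $\eta\in E_c$ with $\eta^T\E[Z\psi(Y)]=\E[\eta^Tm(Y)\psi(Y)]=0$ for every $\psi\in\Psi$. Put $g(y)=\eta^Tm(y)$. The key observation is that the linear functional $T_\eta(\psi)=\int g\,\psi\,dP_Y$ is \emph{continuous} on $L_1(\theta(y)P_Y(dy))$: by conditional Jensen and Cauchy--Schwarz,
\begin{equation*}
|g(y)|=\bigl|\E[\eta^TZ\mid Y=y]\bigr|\le\E[|\eta^TZ|\mid Y=y]\le\|\eta\|\,\E[\|Z\|\mid Y=y]=\|\eta\|\,\theta(y),
\end{equation*}
whence $|T_\eta(\psi)|\le\|\eta\|\int\theta\,|\psi|\,dP_Y=\|\eta\|\,\|\psi\|_{L_1(\theta P_Y)}$. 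Since $\Psi$ is total, its span is dense in $L_1(\theta P_Y)$, so a continuous functional vanishing on $\Psi$ vanishes on the whole space. Because $\theta P_Y$ is a finite measure ($\E[\|Z\|]<\infty$), the sign function $\psi_0=\mathrm{sgn}(g)$ belongs to $L_1(\theta P_Y)$, and $T_\eta(\psi_0)=\int|g|\,dP_Y=0$ forces $g=\eta^Tm=0$ almost surely. This says $\E[\eta^TZ\mid Y]=0$ a.s., i.e. $\eta^T\E[Z\mid Y]$ has zero variance, contradicting Assumption \ref{ft1} for the nonzero $\eta\in E_c$. Hence the span over the entire family $\Psi$ equals $E_c$.

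To extract a finite subfamily I would enumerate $\Psi=\{\psi_1,\psi_2,\dots\}$ and set $V_n=\spann(\E[Z\psi_1(Y)],\dots,\E[Z\psi_n(Y)])$. These form an increasing chain of subspaces of the $d$-dimensional space $E_c$ with $\bigcup_n V_n=E_c$ by the previous step. Since $\dim V_n$ is a nondecreasing integer bounded by $d$, it stabilizes at some index $N$, so $V_N=\bigcup_n V_n=E_c$; taking $\Psi_H=\{\psi_1,\dots,\psi_N\}$ completes the proof.

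I expect the delicate point to be the continuity-plus-density argument: one must check that $T_\eta$ is bounded for exactly the $L_1(\theta P_Y)$ norm appearing in the hypothesis (this is precisely what the pointwise bound $|g|\le\|\eta\|\theta$ delivers) and that \emph{total} is used in the sense of a dense linear span, so that annihilation on $\Psi$ propagates to annihilation everywhere and then to $g\equiv 0$. The remaining integrability verifications (finiteness of $\theta P_Y$ and $\mathrm{sgn}(g)\in L_1(\theta P_Y)$) are routine consequences of the finite second moment assumption.
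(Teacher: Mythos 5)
Your proof is correct, but it takes a genuinely different route from the paper's. The paper argues through SIR: by Lemma \ref{coversir}, the vectors $\E[Z\,\E[Z_k|Y]]$, $k=1,\dots,p$, span $E_c$; it then approximates each conditional expectation $\E[Z_k|Y]$ in $L_1(\E[\|Z\|\,|Y=y]P_Y(\dd y))$ by a finite linear combination $\phi_k$ of elements of $\Psi$, uses the bound $\|\E[Z\phi_k(Y)]-\E[Z\,\E[Z_k|Y]]\|\leq\varepsilon$ together with lower semicontinuity of the rank (continuity of the determinant) to conclude that for $\varepsilon$ small the perturbed vectors still have rank $d$, and reads off $\Psi_H$ as the finitely many elements of $\Psi$ appearing in the $\phi_k$'s. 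You instead run a duality argument: if some nonzero $\eta\in E_c$ annihilated every $\E[Z\psi(Y)]$, the functional $T_\eta(\psi)=\E[\eta^T Z\,\psi(Y)]$, which is bounded on $L_1(\theta P_Y)$ by exactly the estimate $|\eta^T m(y)|\leq\|\eta\|\theta(y)$, would vanish on the dense span of $\Psi$ and hence everywhere; testing against $\mathrm{sgn}(\eta^T m)$ forces $\E[\eta^TZ|Y]=0$ a.s., contradicting Assumption \ref{ft1}. Your finiteness step (dimension stabilization of the increasing chain $V_n$ inside the $d$-dimensional $E_c$) is likewise a soft argument replacing the paper's explicit extraction. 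Both proofs use totality in the same sense (dense linear span) and both hinge on the same weighting $\theta(y)=\E[\|Z\|\,|Y=y]$, which is why that weight appears in the hypothesis; what the paper's approach buys is an explicit, quantitative construction of $\Psi_H$ tied to the SIR vectors, while yours buys a cleaner separation of concerns (spanning by the full family via a Hahn--Banach-style annihilator argument, then finiteness for free) and avoids the determinant-perturbation step entirely. One cosmetic remark: your contradiction only needs that $\E[\eta^TZ|Y]=0$ a.s. implies zero variance, which is immediate; you do not even need to invoke centering of $Z$.
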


\begin{proof}
Lemma \ref{coversir} provides that $\{\E[Z\E[Z_k|Y]],\ k=1,...,p\}$ is a generator of $E_c$. First, let us  show that any vector of this family can be approximated by $\E[Z\phi(Y)]$, where $\phi$ is a linear combination of functions in $\Psi$. Let $\varepsilon>0$ and $k\in \{1,...,p\}$. Since $\Psi$ is a total family in $L_1(\mathbb{E}[\|Z\||Y=y] P_Y(dy))$, there exists $\phi_k$ a finite linear combination of functions in $\Psi$ such that
\begin{equation*}
\mathbb{E}\left[\mathbb{E}[\|Z\||Y]\ |\phi_k(Y)-\E[Z_k|Y]|\right]\leq \varepsilon,
\end{equation*}
besides, we have
\begin{eqnarray*}
\|\mathbb{E}[Z\phi_k(Y)]-\mathbb{E}[Z\E[Z_k|Y]]\| &= & \left\|\ \mathbb{E}\left[\E[Z|Y]\  (\phi_k(Y)-\E[Z_k|Y])\right]\right\| \\
&\leq & \mathbb{E}\left[\E[\|Z\|\ |Y] \ \left|\phi_k(Y)-\E[Z_k|Y]\right|\ \right],
\end{eqnarray*}
and therefore,
\begin{equation}\label{close}
 \|\mathbb{E}[Z\phi_k(Y)]-\mathbb{E}[Z\E[Z_k|Y]]\|\leq \varepsilon.
\end{equation}
Here an important point is that $\E[Z\phi_k(Y)]\in E_c$, it implies that
\begin{equation}\label{subset}
\spann\left(\E[Z\phi_k(Y)],\ k=1,...,p\right)\subset \spann(M_{SIR}),
\end{equation} 
Moreover, (\ref{close}) and the continuity of the determinant involve that the rank of the set of vectors $\E[Z\phi_k(Y)]$ is equal to $d$ if $\varepsilon$ is small enough. Then, instead of an inclusion (\ref{subset}) become an equality and we complete the proof by recalling that each $\phi_k$ is a linear combination of a finite number of functions in $\Psi$. 
\end{proof}

Theorem \ref{anypsi} assumes that the family is total. Some mild conditions can be found in~\citet*{coudene2002}. Let us recall their main result.

\begin{theth}\label{thethm}{(Y. Coud\`ene)}
Let $p\in[0,\infty[$, $\mu$ a borelian probability measure on $[0,1]$, and $f_n:[0,1]\rightarrow \R$ a family of bounded measurable functions that separates the points :
\begin{equation*}
\forall x,y\in [0,1],\ x\neq y,\ \exists n\in \N \quad\text{such that}\quad f_n(x)\neq f_n(y).
\end{equation*} 
Then the algebra spanned by the functions $f_n$ and the constants is dense in $L_p([0,1],\mu)$.
\end{theth}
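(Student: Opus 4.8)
The plan is to reduce this measurable Stone--Weierstrass statement to the classical (continuous) Stone--Weierstrass theorem by embedding $[0,1]$ into a compact product space through the separating family. Since each $f_n$ is bounded, after composing with an affine map I may assume $f_n([0,1])\subseteq[0,1]$; this changes neither the separation property nor the algebra generated, because each original $f_n$ is an affine function of its rescaled version. I then form the compact metrizable space $K=[0,1]^{\N}$ and the map $F:[0,1]\to K$, $F(x)=(f_n(x))_n$, which is Borel measurable (its coordinates are) and, crucially, injective precisely because the $f_n$ separate points.

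Next I would push the measure forward, setting $\nu=F_{*}\mu$, a Borel probability measure on $K$. On $K$ the coordinate projections $\pi_n$ are continuous and separate points, so the algebra $\mathcal{B}_0$ they generate together with the constants is a subalgebra of $C(K)$ that contains the constants and separates points; the classical Stone--Weierstrass theorem then gives that $\mathcal{B}_0$ is dense in $C(K)$ for $\|\cdot\|_\infty$. Because $K$ is a compact metric space and $\nu$ is a finite Borel measure, $C(K)$ is dense in $L_p(K,\nu)$, uniform convergence implying convergence in the relevant $L_p$ topology on a finite measure space for every $p\in[0,\infty[$ (using the metric $\int|\cdot|^p\,d\nu$ when $p<1$, convergence in measure when $p=0$). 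Hence $\mathcal{B}_0$ is dense in $L_p(K,\nu)$.

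I then transfer this back to $[0,1]$. The change-of-variables identity $\int_K|g|^p\,d\nu=\int_{[0,1]}|g\circ F|^p\,d\mu$ shows that $g\mapsto g\circ F$ is an isometry of $L_p(K,\nu)$ onto the closed subspace of $L_p([0,1],\mu)$ consisting of $\sigma(f_1,f_2,\dots)$-measurable functions, and it carries each $\pi_n$ to $f_n$, hence $\mathcal{B}_0$ to the algebra generated by the $f_n$ and the constants. Density of $\mathcal{B}_0$ in $L_p(K,\nu)$ therefore yields density of this algebra in the image subspace $L_p([0,1],\sigma(f_n),\mu)$.

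The remaining and genuinely delicate point is to identify that image with the whole space, i.e. to show $\sigma(f_1,f_2,\dots)=\mathcal{B}([0,1])$. This is exactly where injectivity of $F$ is used a second time: $F$ is an injective Borel map between standard Borel spaces, so by the Lusin--Souslin (Kuratowski) theorem its image is Borel and $F$ is a Borel isomorphism onto it, whence every Borel subset of $[0,1]$ is the $F$-preimage of a Borel subset of $K$ and is thus $\sigma(f_n)$-measurable. I expect this descriptive-set-theoretic step to be the main obstacle, since it is the only place where one cannot argue by elementary approximation; everything else is a routine assembly of Stone--Weierstrass and dominated convergence. Once it is in hand, the $\sigma(f_n)$-measurable functions are all of $L_p([0,1],\mu)$ and the proof is complete. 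As an alternative to the compactification, one could run the functional monotone class (multiplicative system) theorem directly on the $L_p$-closure $V$ of the algebra: the bounded elements of $V$ form a vector space containing the constants, stable under uniformly bounded almost everywhere limits by dominated convergence, and containing the multiplicative system $\{f_n\}$, so they exhaust the bounded $\sigma(f_n)$-measurable functions; the same Lusin--Souslin fact then upgrades $\sigma(f_n)$ to $\mathcal{B}([0,1])$ and density of the bounded measurable functions finishes the argument.
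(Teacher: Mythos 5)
Your proof is essentially correct, but note that the paper itself offers no proof of this statement: it is recalled verbatim as a known theorem of Coud\`ene, so the only meaningful comparison is with the published elementary argument rather than with anything in this paper. Your route --- rescale the $f_n$ into $[0,1]$, embed $[0,1]$ into the compact space $K=[0,1]^{\N}$ via $F=(f_n)_n$, push $\mu$ forward to $\nu$, apply classical Stone--Weierstrass in $C(K)$ together with density of $C(K)$ in $L_p(K,\nu)$, pull everything back along the isometry $g\mapsto g\circ F$, and finally identify the $\sigma(f_n)$-measurable functions with all Borel functions --- is sound, and you correctly isolate the one genuinely non-elementary ingredient: the Lusin--Souslin theorem, which gives that the injective Borel map $F$ is a Borel isomorphism onto a Borel image and hence that $\sigma(f_n)=\mathcal{B}([0,1])$ exactly. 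The standard proof of Coud\`ene's theorem replaces this piece of descriptive set theory by a purely measure-theoretic argument: to approximate $\mathds{1}_A$ for Borel $A$, inner regularity and Lusin's theorem produce a compact $K_0\subset A$ of nearly full relative measure on which every $f_n$, hence $F$, is continuous; then $F|_{K_0}$ is a continuous injection on a compact set, so a homeomorphism onto the compact set $F(K_0)$, and global injectivity of $F$ gives $F^{-1}(F(K_0))=K_0$, so that pulling back continuous approximations of $\mathds{1}_{F(K_0)}$ (Urysohn plus Stone--Weierstrass) approximates $\mathds{1}_{K_0}$ in $L_p(\mu)$. Both arguments use injectivity of $F$ in an essential way; yours buys the cleaner structural conclusion that $\sigma(f_n)$ equals the Borel $\sigma$-algebra exactly (not merely modulo $\mu$-null sets), at the price of invoking Souslin theory, while the regularity route stays entirely elementary. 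Two small repairs to your write-up: a constant $f_n$ cannot be affinely rescaled onto $[0,1]$ (discard it or map it to $0$; constants are in the algebra anyway), and in your monotone-class variant the multiplicative system must be taken as the set of all finite products of the $f_n$'s (which lies in the algebra), since the family $\{f_n\}$ itself need not be stable under multiplication.
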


\begin{remark}\label{coudene}
Accordingly, we can apply Theorem \ref{anypsi} with any family of functions that separates the points, for example polynomials, complex exponentials or indicator functions. To make possible a simple use of this theorem we need to recall this result. If $u=(u_1,...,u_H)$ is a $\R^p$ vector family, then $\spann(uu^T)=\spann(u)$. Thus, if we denote by $\psi_1,...,\psi_H$ some elements of a family that separates the points, then the CS can be obtained by making an eigendecomposition of the order $1$ test function matrix associated to the functions $\psi_1,...,\psi_H$ defined as
\begin{equation*}
M_{TF1}=\sum_{h=1}^H \E[Z\psi_{h}(Y)] \E[Z\psi_h(Y)]^T. 
\end{equation*}
Especially, the eigenvectors associated to a nonzero eigenvalue of any order $1$ test function matrix span the CS. Moreover, as pointed out in \citet*{cook2005}, for $H$ large enough $\spann(\widetilde{M}_{SIR})=\spann(M_{SIR})$. A proof of this result can be obtained by Theorem \ref{anypsi}. By applying it with the indicator family of functions, it gives that
\begin{equation*}
\spann\left(\E[Z\mathds{1}_{\{Y\in I(h)\}}],\ h=1,...,H\right)=\spann(\widetilde{M}_{SIR})=\spann(M_{SIR})=E_c,
\end{equation*} 
if $H$ is sufficiently large. Also, SIR can be understood as a particular TF1. Expression (\ref{uut}) implies that 
\begin{equation*}
\widetilde{M}_{SIR}=\sum_{h=1}^H \frac{1}{p_h} \E[Z\mathds{1}_{\{Y\in I(h)\}}] \E[Z\mathds{1}_{\{Y\in I(h)\}}]^T,
\end{equation*}
hence, SIR is equivalent to TF1 realized with the weighted family of indicator functions $\left(\frac{\mathds{1}_{\{Y\in I(h)\}}}{\sqrt{p_h}}\right)$. More generally for any family of functions, the space spanned by $M_{TF1}$ is not change by a weighting with positive weight. Nevertheless it is no longer the case for the estimated space, and intuitively it seems that such a weighting could influence the convergence rate. The choice of the weights for the family of indicators is debated is section \ref{s51} thanks to a variance minimization. 
\end{remark}

\subsection{\textbf{Order $2$ test function}}
As described before, an important tool in this section is the eigendecomposition of the matrix $M_{\psi}$, therefore we try to be more comprehensive in introducing the following notation. Let $\lambda_{\psi}$ and $\lambda_{Y}$ be the functions $\mathbb{R}^p\rightarrow \mathbb{R}$ respectively defined by 
\begin{equation*}
\lambda_{\psi}(\eta)=\mathbb{E}[(\eta^T Z)^2\psi(Y)]\quad \text{and}\quad \lambda_{Y}(\eta)=\mathbb{E}[(\eta^T Z)^2|Y],
\end{equation*}
and notice that if $\eta$ is a unit eigenvector of $M_{\psi}$ (resp. $ \mathbb{E}[ZZ^T|Y]$), then $\lambda_{\psi}(\eta)$ (resp. $\lambda_Y(\eta)$) is equal to the eigenvalue of the matrix $M_{\psi}$ (resp. $ \mathbb{E}[ZZ^T|Y]$) associated to $\eta$. However, recalling that $E_c^{\perp}$ is an eigenspace of $M_{\psi}$ and $\mathbb{E}[ZZ^T|Y]$, the functions $\lambda_{\psi}$ and $\lambda_Y$ are both constant on the centered spheres of $E_c^{\perp}$. Their respective values on the unit sphere of $E_c^{\perp}$ are noted $\lambda_{\psi}^{\ast}$ and $\lambda_Y^{\ast}$.  

\begin{definition}
Let $\psi$ be a measurable function. We call $\psi$-space and note $E_{\psi}$ the space
\begin{equation*}
E_{\psi}=\spann(M_{\psi}-\lambda_{\psi}^{\ast})=\spann\left(\eta\in B(0,1)\subset\mathbb{R}^p,\ M_{\psi}\eta = \lambda_{\psi}^{\ast} \eta \right)^{\perp}.
\end{equation*}
\end{definition}
Thanks to Theorem \ref{th2} we have already proved that under Assumption \ref{linearity} and \ref{variance} any $\psi$-space is included in $E_c$. However, nothing guarantees the existence of a $\psi$-space equal to $E_c$. We follow the same idea than for the order $1$ method, i.e. we consider some transformations of $Y$ belonging to a dense family. Nevertheless, the results are a little different because we provide the existence of a $\psi$-space equal to $E_c$. A unique additional assumption is needed.
\begin{assumption}\label{cond2}
\begin{equation*}
\forall \eta \in E_c,\ \|\eta\|=1\ \ \ \ \mathbb{P}\left(\mathbb{E}\left[(\eta^TZ)^2|Y\right]=\mathbb{E}\left[\left. \frac{\|Q_cZ\|^2}{p-d}\right| Y\right]\right)<1.
\end{equation*}
\end{assumption}

\begin{remark}
Assumption \ref{cond2} takes the same approach as \citet*{li2007}. As it is highlighted in Remark \ref{ccvdcv}, our set of assumptions is weaker than their beacause DCV has replaced CCV. To match their context, assume that CCV condition is satisfied. Then clearly, Assumption \ref{cond2} becomes ``$\E[(\eta^T Z)^2|Y]$ is nondegenerate", i.e. is not a.s. a constant. Otherwise, TF1 allows an exhaustive estimation of the CS provided that $\E[(\eta^TZ)|Y]$ is nondegenerate. Thus the exhaustiveness condition of TF is the union of the two previous and it gives
\begin{equation*}
\E[(\eta^T Z)^2|Y]\quad\text{or}\quad  \E[(\eta^TZ)|Y]\quad \text{is nondegenerate,} 
\end{equation*}
which is the same than the one provided for DR in \citet*{li2007}. Accordingly, TF evolved in a more general context given by DCV but the assumptions ensuring its exhaustiveness are as weak as the one in the literature.
\end{remark}

In the proof of the following theorem we will need Lemma \ref{lemmematrix} and Proposition \ref{propmatrix} which are stated and demonstrated in the appendix.

\begin{theorem}\label{apsi}
Assume that $Z$ and $Y$ satisfy Assumptions \ref{linearity}, \ref{variance} and \ref{cond2}. Assume also that $Z$ has a finite second moment, then if $\Psi$ is a total countable family in the space $L_1(\E[\|Z\|^2\ |Y=y] P_Y(dy))$, there exists $\psi$ a finite linear combination of functions in $\Psi$ such that
\begin{equation*}
E_{\psi}=E_c.
\end{equation*}
\end{theorem}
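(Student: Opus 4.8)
The plan is to mirror the structure of the proof of Theorem~\ref{anypsi}, but now working at the level of the matrices $M_\psi$ rather than the vectors $\E[Z\psi(Y)]$. First I would establish the analogue of Lemma~\ref{coversir} for order $2$: namely that Assumption~\ref{cond2} is exactly the condition guaranteeing that the ``ideal'' conditional object $\E[ZZ^T|Y]$ already resolves every direction of $E_c$. Concretely, by Theorem~\ref{th2} every direction of $E_c^\perp$ is an eigenvector of $\E[ZZ^T|Y]$ with eigenvalue $\lambda_Y^\ast = \E[\|Q_cZ\|^2/(p-d)\,|\,Y]$, so for a unit $\eta\in E_c$ the quantity $\lambda_Y(\eta)-\lambda_Y^\ast = \E[(\eta^TZ)^2|Y]-\E[\|Q_cZ\|^2/(p-d)\,|\,Y]$ measures how much $\eta$ is ``seen'' by the second moment. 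Assumption~\ref{cond2} says precisely that this difference is not almost surely zero for any unit $\eta\in E_c$, i.e. no direction of $E_c$ is invisible to $\E[ZZ^T|Y]$.

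Next I would pass from $\E[ZZ^T|Y]$ to a single matrix $M_\psi$ built from a well-chosen $\psi$. The totality of $\Psi$ in $L_1(\E[\|Z\|^2|Y=y]\,P_Y(dy))$ is the workhorse: for any target coordinate function of $Y$ I can find a finite linear combination $\phi$ of elements of $\Psi$ approximating it in that $L_1$ norm, and the bound $\|M_\phi - M_{\psi'}\| \le \E[\|Z\|^2\,|\phi(Y)-\psi'(Y)|]$ transfers the approximation to the matrix level, just as inequality~(\ref{close}) did for vectors. The idea is to choose $\psi$ as a linear combination approximating a function that makes the eigenvalues $\lambda_\psi(\eta)$ for $\eta\in E_c$ all differ from $\lambda_\psi^\ast$; since under Assumption~\ref{cond2} the ``separating'' information is present in $\E[ZZ^T|Y]$, a generic dense combination will inherit it. This is where I expect to invoke the appendix results Lemma~\ref{lemmematrix} and Proposition~\ref{propmatrix}: presumably they provide either a perturbation/genericity statement ensuring that the set of $\psi$ for which $E_\psi \subsetneq E_c$ is negligible, or a rank/eigenvalue-continuity argument guaranteeing that once $M_\psi$ is close enough to a matrix with $E_c$ as its non-trivial eigenspace, $E_\psi$ jumps up to all of $E_c$.

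The main obstacle, and the reason this is harder than the order $1$ case, is that for TF1 one only needs a spanning family of vectors and can collect finitely many $\psi$'s, whereas here the statement claims a \emph{single} $\psi$ achieving $E_\psi = E_c$ exactly. Spanning is an open condition (ranks are lower-semicontinuous, hence stable under small perturbation), so a crude approximation argument suffices for Theorem~\ref{anypsi}; but having $E_\psi = E_c$ requires that $\lambda_\psi(\eta)\neq\lambda_\psi^\ast$ simultaneously for every $\eta\in E_c$, which is a \emph{conjunction} of finitely many non-degeneracy conditions that must hold for one and the same $\psi$. The delicate step is therefore to show these conditions can be met jointly — the natural route is a Baire-category or measure-theoretic genericity argument showing that for each fixed eigendirection the ``bad'' set of coefficients is a proper closed/null subset, and then that a finite intersection of complements is still nonempty. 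I would handle the eigenvalue multiplicities inside $E_c$ carefully (distinct eigenvalues can collide), and this is exactly the technical content I expect Lemma~\ref{lemmematrix} and Proposition~\ref{propmatrix} to package.
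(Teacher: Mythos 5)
Your setup is faithful to the paper's: you correctly read Assumption~\ref{cond2} as saying that no unit $\eta\in E_c$ has $\E[(\eta^TZ)^2|Y]-\E[\|Q_cZ\|^2/(p-d)\,|\,Y]$ almost surely zero, and your $L_1$-approximation bound $\|M_\phi-M_{\psi'}\|\le \E[\|Z\|^2|\phi(Y)-\psi'(Y)|]$ is exactly the order-2 analogue of inequality~(\ref{close}). You also correctly isolate the real difficulty: a \emph{single} $\psi$ must separate every direction of $E_c$ from the eigenvalue $\lambda_\psi^{\ast}$ at once. But your proposed resolution of that difficulty is where the gap lies. The decisive structural fact, which your proposal never states, is that the map $\psi\mapsto N_\psi=P_1^T(M_\psi-\lambda_\psi^{\ast})P_1$ is \emph{linear} in $\psi$, so that $\mathcal{M}=\{N_\psi:\ \psi=\sum_h\alpha_h\psi_h\}$ is a linear subspace of symmetric $d\times d$ matrices, and the statement to prove is simply that $\mathcal{M}$ contains an invertible matrix (equivalently $\rank(M_\psi-\lambda_\psi^{\ast}I)=d$ for some $\psi$). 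The paper closes this by the purely algebraic Proposition~\ref{propmatrix}: a linear subspace consisting entirely of \emph{singular} symmetric matrices admits a common isotropic vector $u$ with $u^TNu=0$ for all $N$ in the subspace. If no $\psi$ worked, this $u$ would satisfy $u^TN_\psi u=\E[(u^TN_Yu)\psi(Y)]=0$ for all finite linear combinations from the total family, hence $u^TN_Yu=0$ a.s., contradicting Assumption~\ref{cond2}. Your guess that Lemma~\ref{lemmematrix} and Proposition~\ref{propmatrix} are ``perturbation/genericity'' or ``eigenvalue-continuity'' statements is incorrect; they are this common-isotropic-vector dichotomy (a result in the spirit of subspaces of non-invertible matrices), and without it your argument has no engine.

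Your fallback genericity argument cannot be repaired into a proof as stated, for two reasons. First, the non-degeneracy requirements are not ``a conjunction of finitely many conditions'': the eigendirections of $M_\psi$ inside $E_c$ move with $\psi$, so one must either control all unit $\eta\in E_c$ (a continuum, where unions of null ``bad'' sets need not be null --- indeed any invertible but indefinite $N_\psi$ still has isotropic directions, so $\bigcup_\eta\{\alpha:\eta^TN_{\psi_\alpha}\eta=0\}$ can be everything even when invertible members abound) or collapse everything into the single condition $\det N_{\psi_\alpha}\neq 0$. Second, once you do work with $\det N_{\psi_\alpha}$, which is a polynomial in the coefficients $\alpha$, genericity only tells you that the good set is dense and co-null \emph{provided the polynomial is not identically zero}; ruling out $\det N_{\psi_\alpha}\equiv 0$ on the whole subspace is precisely the content of the theorem, and it is exactly what Proposition~\ref{propmatrix} combined with Assumption~\ref{cond2} delivers. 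So the Baire/measure argument defers the entire difficulty to the step it was supposed to solve.
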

\begin{proof}
Let $\Psi$ be a total countable family in $L_1(\E[\|Z\|^2\ |Y=y] P_Y(dy))$. By Theorem \ref{th2}, $E_c^{\perp}\subset E_{\psi}^{\perp}$ for any $\psi$. Then it suffices to show that there exists $\psi$ a finite linear combination of functions in $\Psi$ such that $\dim(E_{\psi})=\rank(M_{\psi}-\lambda_{\psi}^{\ast}I)=d$. In the basis $(P_1,P_2)$, where $P_1$ and $P_2$ are respectively basis of $E_c$ and $E_c^{\perp}$, the matrix $M_{\psi}-\lambda_{\psi}^{\ast}I$ can be written as
\begin{equation*}
\matdeux{N_{\psi}}{0}{0}{0},
\end{equation*}  
with $N_{\psi}=P_1^T(M_{\psi}-\lambda_{\psi}^{\ast})P_1$. Notice that the space
\begin{equation*}
\mathcal{M}=\{N_{\psi},\ \psi=\sum_h \alpha_h \psi_h\},
\end{equation*}
is a linear subspace of the symmetric matrices with dimension $d\times d$. In the basis $(P_1,P_2)$, Assumption (\ref{cond2}) becomes
\begin{equation*}
\forall \eta\in \R^d,\quad \P(\eta^T N_{Y}\eta=0)< 1,
\end{equation*}
with $N_Y=P_1^T(M_{Y}-\lambda_{Y}^{\ast})P_1$. Clearly, this implies that
\begin{equation}\label{newcond}
\forall \eta \in \R^d ,\quad \exists \psi, \quad \eta^T N_{\psi} \eta\neq0,
\end{equation}
and because $\Psi$ is a total family in $L_1(\E[\|Z\|^2\ |Y=y] P_Y(dy))$, the function $\psi$ in the previous equation could be a finite linear combination of functions in $\Psi$ and then $N_{\psi}\in \mathcal{M}$. Thus the proof consists in showing that given a linear subspace $\mathcal{M}\subset \R^{d\times d}$ of symmetric matrices, if (\ref{newcond}) is checked, then there exists an invertible matrix in $\mathcal{M}$. The contrapositive is the statement of Proposition \ref{propmatrix}.
\end{proof}

Theorem \ref{apsi} states the existence of a $\psi$-space equal to $E_c$, yet it does not provide an explicit form of such a $\psi$. Hence, we set out the following corollary.

\begin{corollary}\label{sumpsi}
Assume that $Z$ and $Y$ satisfy Assumptions \ref{linearity}, \ref{variance} and \ref{cond2}. Assume also that $Z$ has a finite second moment then, if $\Psi$ is a total countable family in the space $L_1(\E[\|Z\|^2\ |Y=y] P_Y(dy))$, we have
\begin{equation*}
 \underset{\Psi_H}{\oplus} E_{\psi} =E_c,
\end{equation*}   
where $\Psi_H$ is a finite subset of $\Psi$.
\end{corollary}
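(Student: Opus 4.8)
The plan is to read the corollary off Theorem~\ref{apsi} together with the linearity of the map $\psi \mapsto M_\psi - \lambda_\psi^\ast I$, so that essentially no new analytic work is required. Throughout I interpret the displayed symbol $\underset{\Psi_H}{\oplus}E_\psi$ as the subspace of $\R^p$ generated by the union of the $\psi$-spaces $E_\psi$, $\psi\in\Psi_H$; since distinct $E_\psi$ need not meet only in $\{0\}$, this is a join (subspace sum) rather than an internal direct sum, and I would flag that reading explicitly. One inclusion is then immediate: by Theorem~\ref{th2}, $E_\psi\subseteq E_c$ for every measurable $\psi$, so $\sum_{\psi\in\Psi_H}E_\psi\subseteq E_c$ for any finite $\Psi_H$.

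For the reverse inclusion I would invoke Theorem~\ref{apsi}, which under Assumptions~\ref{linearity}, \ref{variance} and \ref{cond2} produces a finite linear combination $\psi=\sum_{h=1}^H\alpha_h\psi_h$ of elements $\psi_h\in\Psi$ with $E_\psi=E_c$; I take $\Psi_H=\{\psi_1,\dots,\psi_H\}$. Because $\psi\mapsto M_\psi=\E[ZZ^T\psi(Y)]$ and $\psi\mapsto\lambda_\psi^\ast=\frac{1}{p-d}\E[\|Q_cZ\|^2\psi(Y)]$ are both linear, one has $M_\psi-\lambda_\psi^\ast I=\sum_h\alpha_h(M_{\psi_h}-\lambda_{\psi_h}^\ast I)$. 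Since the column space of a linear combination of matrices is contained in the sum of the individual column spaces, $E_c=E_\psi=\spann(M_\psi-\lambda_\psi^\ast I)\subseteq\sum_h\spann(M_{\psi_h}-\lambda_{\psi_h}^\ast I)=\sum_h E_{\psi_h}$. Combined with the first inclusion this gives $\sum_{\psi\in\Psi_H}E_\psi=E_c$, which is the assertion.

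I expect no genuine obstacle beyond bookkeeping: the substance is entirely carried by Theorem~\ref{apsi}. The two facts to verify are the linearity of $\lambda_\psi^\ast$ in $\psi$, which is immediate from the closed form supplied in Theorem~\ref{th2}, and the elementary inclusion $\spann(\sum_h\alpha_h C_h)\subseteq\sum_h\spann(C_h)$ for symmetric matrices $C_h$. Should a self-contained argument be preferred, I would instead pass to an adapted basis $(P_1,P_2)$ of $E_c$ and $E_c^\perp$ as in the proof of Theorem~\ref{apsi}, write $M_\psi-\lambda_\psi^\ast I=\matdeux{N_\psi}{0}{0}{0}$, observe that $\sum_{\psi\in\Psi_H}E_\psi=E_c$ is equivalent to $\bigcap_{\psi\in\Psi_H}\ker N_\psi=\{0\}$, use Assumption~\ref{cond2} together with totality of $\Psi$ to exhibit, for each nonzero $\eta\in\R^d$, a $\psi\in\Psi$ with $\eta^T N_\psi\eta\neq0$ (so that $\bigcap_{\psi\in\Psi}\ker N_\psi=\{0\}$), and finally extract a finite subfamily $\Psi_H$ by finite-dimensionality of $\R^d$.
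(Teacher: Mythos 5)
Your proposal is correct and follows essentially the same route as the paper: both invoke Theorem \ref{apsi} to get a finite linear combination $\psi=\sum_{h=1}^H\alpha_h\psi_h$ with $E_\psi=E_c$, use Theorem \ref{th2} for the trivial inclusion, and then exploit linearity of $\psi\mapsto M_\psi-\lambda_\psi^\ast I$ — the paper via a contradiction argument on a unit vector of $E_\psi$ orthogonal to $\oplus_h E_{\psi_h}$ (using the eigenvalue characterization of $E_{\psi_h}^\perp$), you via the dual and equally valid direct inclusion $\spann\left(\sum_h\alpha_h(M_{\psi_h}-\lambda_{\psi_h}^\ast I)\right)\subseteq\sum_h\spann\left(M_{\psi_h}-\lambda_{\psi_h}^\ast I\right)$. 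Your reading of $\oplus$ as a subspace sum (join) rather than an internal direct sum is also the correct interpretation of the paper's notation, since the $E_{\psi_h}$ may overlap.
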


\begin{proof}
From Theorem \ref{apsi} we have $E_{\psi} =E_c$ where $\psi =\sum_{h=1}^H\alpha_h \psi_h$. Hence, we need to show that $E_{\psi}\subset \oplus E_{\psi_h} $ since the other inclusion is trivial. Suppose that it exists $\eta\in E_{\psi}$ with norm $1$ such that $\eta\perp \oplus E_{\psi_h} $. Then by definition, for every $h=1,...,H$,
\begin{equation*}
M_{\psi_h}\eta = \lambda_ {\psi_h}^ \ast \eta,
\end{equation*}
and we obtain
\begin{equation*}
M_{\psi}\eta = \sum_{h=1}^H \alpha_h \lambda_ {\psi_h}^ \ast \eta=\lambda_{\psi}^ \ast \eta.
\end{equation*}
which is impossible because $\eta\in E_{\psi}$. 
\end{proof}

Corollary \ref{sumpsi} is the counterpart of Theorem \ref{anypsi} for TF2. Nevertheless, it seems difficult to use it in practice because it requires an eigendecomposition of a large number of matrices. Besides, Theorem \ref{apsi} is the cornerstone of TF2. Using the theorem quoted in Remark \ref{coudene}, we provide order $2$ methods based on families of functions that separate the points. For each such family, it exists a function $\psi$ such that the associated $\psi$-space is equal to the CS.

\section{Choice of the test function. Asymptotic variance minimization}\label{s5}
This section is divided into two paragraphs. First, we study the case of the family of indicator functions for TF1 and secondly, we are interested in finding the best $\psi$ for TF2. Clearly, for the order $1$ method we need at least $d$ functions to recover the CS whereas for the order $2$, as we showed before, we can expect to find a function $\psi$ such that $M_{\psi}$ covers all the directions of the CS. This is the reason why we fix the class of function in the first paragraph and we search a unique function in the second paragraph. 

\subsection{Order 1 test function: optimality among the indicators}\label{s51}
In this section, we develop a test function plug-in method based on the minimization of the variance estimation in the case of the family of indicator functions for $\Psi_H$. Theorem \ref{anypsi} and Remark \ref{coudene} imply that the whole subspace $E_c$ can be covered by the family of vectors $\{\E[Z\mathds{1}_{\{Y\in I(h)\}}],h=1,...,H\}$ for a suitable partition $I(h)$. Actually, it is possible to extract $d$ orthogonal vectors living in the space spanned by this family, and then it provides us a basis of the CS. This procedure is realized by SIR. Nevertheless, the issue here is somewhat more complicated, we want to find $d$ orthogonal vectors that have the minimal asymptotic mean squared error for the estimation of the projection $P_c$. We define
\begin{equation}\label{mse1}
\text{MSE}=\E\left[\|P_c-\^P_n\|^2\right],
\end{equation}
where $\|\cdot \|$ stands for the Frobenius norm and $\^P_n$ is derived from the family of vector $\^\eta=(\^\eta_1,...,\^\eta_d)$ defined as
\begin{equation*}
\^\eta_k=\frac{1}{n}\sum_{i=1}^n Z_i\psi_k(Y_i)\quad\text{with}\quad \psi_k(Y)=(\mathds{1}_{\{Y\in I(1)\}},...,\mathds{1}_{\{Y\in I(H)\}})\alpha_k =\mathds{1}_Y^T \alpha_k,
\end{equation*} 
where $\alpha_k \in \R^{H}$. Besides, we introduce $\eta=(\eta_1,...,\eta_d)$ with $\eta_k=\E[Z\psi_k(Y)]$. Consequently, we aim at minimizing MSE according to the family $(\psi_k)_{1\leq k\leq d}$, or equivalently according to the matrix $\alpha=(\alpha_1,...,\alpha_d)\in \R^{H\times d}$. Moreover, since we have
\begin{eqnarray}\label{mse}
\text{MSE}&=& \E[\tr(P-\^P_n)^2]\nonumber\\
&=& d +\E[\^d-2\tr((I-Q_c)\^P_n)]\nonumber\\
&=& \E[d-\^d]+2\E[\tr(Q_c\^P_n)],
\end{eqnarray}
and we suppose that $d$ is known, the minimization of MSE results only on the minimization of the second term in the previous equality. Hence, this naturally leads us to the minimization problem 
\begin{equation*}
\underset{\alpha}{\text{min}}\ \lim_{n\rightarrow \infty} n\E[\tr(Q_c\^P_n)],
\end{equation*}
under the constraint of orthogonality of the family $(\eta_k)_{1\leq k\leq d}$.
For a more comprehensive approach, we choose to minimize the expectation of the limit in distribution, instead of the limit of the expectation when $n$ goes to infinity, of the sequence $n\tr(Q_c\^P_n)$. To set out clearly the next proposition, let us introduce some notations. Define the matrices 
\begin{eqnarray*}
C = (C_1,...,C_H)&\quad\text{with} & C_h=\E[Z \mathds{1}_{\{Y\in I(h)\}}], \\ 
D = \diag d_h & \quad\text{with} & d_h= \left(\E[\|Q_cZ\|^2\mathds{1}_{\{Y\in I(h)\}}]\right),
\end{eqnarray*}
and 
\begin{equation*}
G=D^{-\frac{1}{2}}C^T CD^{-\frac{1}{2}}.
\end{equation*}
The matrix $G$ is the Gram matrix of the vector family $(C_h/\sqrt{d_h})_{1\leq h\leq H}$, Theorem \ref{anypsi} and Remark \ref{coudene} ensure that its rank is equal to $d$. Besides, $G$ is diagonalisable and so we define $P=(P_1P_2)\in \R^{p\times (d+(p-d))}$ such that  
\begin{equation*}
P^T G P = \begin{pmatrix} D_0 &  0 \\ 0&0\end{pmatrix}, 
\end{equation*}
where $D_0\in \R^{d\times d}$.

\begin{proposition}\label{alpha}
The random variable $n \tr(Q_cP_n)$ has a limit in law $W_ \alpha  $ as $n\rightarrow \infty$. The minimization problem
\begin{equation}\label{minpr}
\underset{\alpha}{\min}\  \E\left[W_{\alpha}\right] \quad\text{u.c.}\quad \eta^T\eta=Id,
\end{equation}
has a unique solution, up to orthogonal transformations, given by
\begin{equation*}
\alpha=D^{-\frac{1}{2}} P_1 D_0^{-\frac{1}{2}}.
\end{equation*}

\end{proposition}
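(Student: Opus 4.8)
The plan is to split the statement into two essentially independent tasks: first identifying the limiting random variable $W_\alpha$, and then solving the deterministic matrix optimisation that results once its expectation is computed. I begin with a structural remark. Writing $\eta=C\alpha$, the constraint $\eta^T\eta=\alpha^TC^TC\alpha=Id$ forces $\eta$ to have $d$ orthonormal columns, each lying in $E_c$ by Theorem \ref{th1}; since $\dim E_c=d$ this gives $\spann(\eta)=E_c$ and hence $P_c=\eta\eta^T$. As $\^P_n$ is the orthogonal projection onto $\spann(\^\eta)$ and both $\^P_n$ and $Q_c$ are symmetric idempotents, we have $\tr(Q_c\^P_n)=\|Q_c\^P_n\|^2$. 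To identify the limit I would expand $\^P_n=\^\eta(\^\eta^T\^\eta)^{-1}\^\eta^T$ around $\eta$: with $\Delta_n=\sqrt n(\^\eta-\eta)$ one gets $\^P_n=P_c+n^{-1/2}(Q_c\Delta_n\eta^T+\eta\Delta_n^TQ_c)+O_P(n^{-1})$, and multiplying on the left by $Q_c$ (using $Q_c\eta=0$) yields $Q_c\^P_n=n^{-1/2}Q_c\Delta_n\eta^T+O_P(n^{-1})$. Squaring and using $\eta^T\eta=Id$ gives $n\tr(Q_c\^P_n)=\tr(\Delta_n^TQ_c\Delta_n)+o_P(1)$; since $\^\eta$ is a vector of empirical means, the central limit theorem gives $\Delta_n\overset{d}{\to}\Delta$ Gaussian, and the continuous mapping theorem delivers $W_\alpha=\tr(\Delta^TQ_c\Delta)$.

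Next I would compute $\E[W_\alpha]$. The $k$-th column of $\Delta$ has covariance $\var(Z\psi_k(Y))$, and since $W_\alpha=\sum_k(\Delta e_k)^TQ_c(\Delta e_k)$ involves only these diagonal blocks, $\E[W_\alpha]=\sum_k\tr(Q_c\var(Z\psi_k(Y)))$. The mean term drops because $\eta_k\in E_c$ gives $Q_c\eta_k=0$, leaving $\tr(Q_c\var(Z\psi_k(Y)))=\E[\psi_k(Y)^2\|Q_cZ\|^2]$. As the slices are disjoint, $\psi_k(Y)^2=\sum_h\alpha_{k,h}^2\mathds{1}_{\{Y\in I(h)\}}$, so this equals $\sum_h\alpha_{k,h}^2d_h=\alpha_k^TD\alpha_k$. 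Summing over $k$, the criterion takes the clean quadratic form $\E[W_\alpha]=\tr(\alpha^TD\alpha)$, to be minimised under $\alpha^TC^TC\alpha=Id$.

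It then remains to solve this deterministic problem, for which I would use the change of variable $\beta=D^{1/2}\alpha$: the objective becomes $\|\beta\|^2$ and the constraint $\beta^TG\beta=Id$, with $G=P_1D_0P_1^T$. Decomposing $\beta=P_1a+P_2b$, the constraint reads $a^TD_0a=Id$ and does not involve $b$, while $\|\beta\|^2=\|a\|^2+\|b\|^2$; hence every minimiser has $b=0$. On the set $a^TD_0a=Id$ the value $\|a\|^2=\tr(D_0^{-1})$ is constant, and the feasible $a$ are exactly $a=D_0^{-1/2}v$ with $v$ orthogonal, so that $\alpha=D^{-1/2}P_1D_0^{-1/2}v$. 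This is precisely the announced minimiser, unique up to the orthogonal factor $v$.

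The genuine obstacle is the limiting argument of the first paragraph rather than the subsequent algebra. One must justify a second-order delta method for the projection-valued map $\^\eta\mapsto\^P_n$, which is smooth near the full-rank $\eta$; verify that the first-order contribution to $\tr(Q_c\^P_n)$ cancels after contraction with $Q_c$ (so that the leading order is genuinely $n^{-1}$); control the remainder to be $o_P(n^{-1})$; and confirm that integrability of the Gaussian quadratic form legitimises passing from the limit in law to its expectation. It should be kept in mind that, as the statement specifies, the criterion is the expectation of the distributional limit $W_\alpha$ and not the limit of the exact MSE sequence.
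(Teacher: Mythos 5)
Your proposal is correct, and its overall architecture --- identify $W_\alpha=\tr(\delta^TQ_c\delta)$, reduce $\E[W_\alpha]$ to $\tr(\alpha^TD\alpha)$ via $Q_c\eta_k=0$ and disjointness of the slices, then minimize under $\alpha^TC^TC\alpha=Id$ by diagonalizing $G$ --- is exactly that of the paper; in particular your change of variable $\beta=D^{1/2}\alpha$ split along $(P_1,P_2)$ is the paper's change of variable $U=P^TD^{1/2}\alpha$ in other notation, and both yield the minimizer $D^{-1/2}P_1D_0^{-1/2}$ up to a right orthogonal factor, the objective being constant ($=\tr(D_0^{-1})$) on the feasible set. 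The one place you genuinely deviate is the identification of the limit law, and it is precisely the step you single out as the main obstacle: you expand $\^P_n$ to first order around $\eta$, argue the first-order trace vanishes, and are then left having to control an $o_P(n^{-1})$ remainder via a second-order delta method. The paper's route makes all of that evaporate through an exact identity: writing $\^P_n=\^\eta(\^\eta^T\^\eta)^{-1}\^\eta^T$ and using $Q_c\eta=0$ together with $Q_c^2=Q_c$, one has exactly $\^\eta^TQ_c\^\eta=(\^\eta-\eta)^TQ_c(\^\eta-\eta)$, hence $n\tr(Q_c\^P_n)=\tr\bigl(\sqrt{n}(\^\eta-\eta)^TQ_c\,\sqrt{n}(\^\eta-\eta)\,(\^\eta^T\^\eta)^{-1}\bigr)$ with no remainder term at all; the CLT for the empirical means $\^\eta$, the convergence $(\^\eta^T\^\eta)^{-1}\to Id$ in probability, Slutsky's theorem and continuity of the trace then deliver $W_\alpha$ directly. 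So your delta-method scaffolding is sound but unnecessary --- if you keep it, the remainder control you list as an open task must actually be carried out, whereas the exact identity costs one line. Finally, you are right, and in agreement with the paper, that the minimized quantity is the expectation of the distributional limit rather than $\lim_n n\,\E[\tr(Q_c\^P_n)]$, so no uniform-integrability argument is owed.
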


\begin{proof}
We first calculate the expectation of the limit in law of the sequence $n\tr(Q_c\^P_n)$ and then we solve the optimization problem. Since 
\begin{eqnarray*}
n\tr(Q_c\^P_n) &=& n\tr(\^\eta^TQ_c\^\eta\ (\^\eta^T \^\eta)^{-1})\\
&=& \tr(\sqrt{n}(\^\eta^T-\eta^T)Q_c\sqrt{n}(\^\eta-\eta) (\^\eta^T \^\eta)^{-1}),
\end{eqnarray*}
Slutsky's theorem and the continuity of the operator $\tr(\cdot)$ provides that $n\tr(Q_c\^P_n)$ converges to $\tr(\delta^TQ_c\delta)$ in distribution, where $\delta\in \R^{p\times d}$ is the limit in law of the sequence $\sqrt{n}(\^\eta-\eta)$, i.e. a normal vector with mean $0$. Thus it remains to calculate the expectation of this limit, notice that
\begin{eqnarray*}
\E\left[W_{\alpha}\right]=\E\left[\tr(\delta^TQ_c\delta)\right] &=& \sum_{k=1}^d \tr\left(Q_c\E[\delta_{k}\delta_{k}^T]\right),
\end{eqnarray*}
where $\delta_{k}$ stands for the limit in law of the sequence $\sqrt{n}(\^\eta_k-\eta_k)$. Finally, since its variance is equal to $\var(Z\psi_k(Y))$ and using the linearity condition, we have
\begin{equation}\label{lim}
\E\left[W_{\alpha}\right]=\sum_{k=1}^d \E\left[\|Q_cZ\|^2 \psi_k(Y)^2\right].
\end{equation}

Now let us reformulate the minimization problem in terms of matrix $\alpha$. First, from (\ref{lim}) and using that the $I(h)$ are pairwise disjoint, we have 
\begin{equation}\label{min}
\E\left[W_{\alpha}\right] = \sum_{k=1}^d \alpha_k^T \E[\|Q_c Z\|^2 \mathds{1}_Y\mathds{1}_Y^T]\alpha_k =\tr(\alpha^T D \alpha ),
\end{equation} 
and also, 
\begin{equation}\label{cons}
\eta^T\eta= \alpha^T C^T C \alpha = (D^{\frac{1}{2}}\alpha)^TG D^{\frac{1}{2}}\alpha.
\end{equation}
From (\ref{min}) and (\ref{cons}) we set out the equivalent minimization problem 
\begin{equation*}
\underset{\alpha}{\text{min}}\  \tr\left(\alpha^T D \alpha\right)\quad\text{u.c.}\quad (D^{\frac{1}{2}}\alpha)^TG D^{\frac{1}{2}}\alpha =Id,
\end{equation*}
then, from the variable change $U=P^T D^{\frac{1}{2}} \alpha$ we derive
\begin{equation*}\label{minpr2}
\underset{U}{\text{min}}\ \tr( U^TU)\quad\text{u.c.}\quad U^T \matdeux{D_0}{0}{0}{0} U =Id.
\end{equation*}
By writing $U^T=(U_1^T,U_2^T)$ we notice that there is no constraint on $U_2$, which implies that $U_2=0$. Consequently, it remains to solve 
\begin{equation}\label{minpr3}
\underset{U_1}{\text{min}}\ \tr( U_1U_1^T)\quad\text{u.c.}\quad U_1 U_1^T =D_0,
\end{equation}
where $U_1\in \R^{d\times d}$. Clearly, in (\ref{minpr3}) the quantity to minimize is fixed by the constraint. Then, a solution of it is given by $U_1=D_0^{-\frac{1}{2}} H$ where $H$ is any orthogonal matrix. Hence, the solution of (\ref{minpr}) is
\begin{equation*}
\alpha=D^{-\frac{1}{2}} P U=D^{-\frac{1}{2}} P_1 D_0^{-\frac{1}{2}} H,
\end{equation*}
where $H$ is any orthogonal matrix.
\end{proof}

To make a link with other methods and facilitate the programming of TF1, let us explain the solution in another way. Instead of explaining the solution in terms of weight we put on the indicator functions, we explain it in terms of vectors $\eta_k$ associated to these weights. First notice that, with the chosen notation
\begin{equation*}
D^{-\frac{1}{2}}C^TC D^{-\frac{1}{2}} P_1=P_1 D_0,
\end{equation*} 
multiplying by $CD^{-\frac{1}{2}}$ on the left and by $D_0^{-\frac{1}{2}}$ on the right, it gives
\begin{equation*}
C D^{-1}C^T C D^{-\frac{1}{2}} P_1 D_0^{-\frac{1}{2}}=C D^{-\frac{1}{2}} P_1 D_0^{-\frac{1}{2}} D_0.
\end{equation*}
Defining an order $1$ test function matrix $\widetilde{M}_{TF1}=C D^{-1}C^T$, and noting that $\eta=C D^{-\frac{1}{2}} P_1 D_0^{-\frac{1}{2}}$, the previous equation is equivalent to
\begin{equation*}
\widetilde{M}_{TF1}\eta=\eta D_0.
\end{equation*}
Thus, since $\widetilde{M}_{TF1}$ has the same rank as $G$, we have showed that the vectors $\eta_k$ derived from the optimal weight family, are the eigenvectors of $\widetilde{M}_{TF1}$ associated to nonzero eigenvalues. Besides, it is easy to verify that the previous development is still true when each quantity is replaced by its estimate. Therefore in practice, we have to make the eigendecomposition of an estimator of the matrix $\widetilde{M}_{TF1}$.

As it is stated in the introduction of section \ref{basic}, the SIR estimator is obtained thanks to an eigendecomposition of the matrix $\widetilde{M}_{SIR}$, while our matrix of interest here is $\widetilde{M}_{TF1}$. To compare both methods, we write there expressions as follows
\begin{equation}\label{poids}
\widetilde{M}_{SIR}=\sum_{h=1}^H \frac{C_h  C_h^T}{p_h},\quad \quad\widetilde{M}_{TF1}=\sum_{h=1}^H \frac{C_h  C_h^T}{d_h}.
\end{equation}
As we noticed before, SIR is really closed to the order one test function method proposed here, both methods try to obtain the information contains in the slices through the $C_h$. This information is collected more rapidly thanks to TF1 because it minimizes the criterion (\ref{mse1}), and as a consequence the convergence rate would be better. This idea is supported by the expression of $\widetilde{M}_{TF1}$ in which bad slices are less weighted. When $H\rightarrow\infty$, $\widetilde{M}_{SIR}\rightarrow M_{SIR}$ and clearly $\widetilde{M}_{TF1}$ converge to
\begin{equation*}
M_{TF1}=\E\left[Z\frac{ \E[Z|Y]}{\E[\|Q_cZ\|^2|Y]}\right].
\end{equation*}
As a consequence of (\ref{poids}), the TF1 variance minimization with indicators requires the knowledge of $Q_c$. Therefore we set out a plug-in method to estimate $Q_c$.

\textbf{TF1 Algorithm:}
\begin{enumerate}
\setcounter{enumi}{-1}
\item Standardization of $X$ into $Z$. Initialize $\^Q_c=I$.
\item Compute
\begin{equation*}
\^d_{h}=\frac 1 n \sum_{i=1}^n \|\^Q_c Z_i\|^2\mathds{1}_{\{Y_i\in 
I(h)\}} , \quad\^C_h=\frac 1 n\sum_{i=1}^n Z_i\mathds{1}_{\{Y_i\in 
I(h)\}} 
\end{equation*}
\begin{equation*}
\text{and}\quad \^M=\sum_{h=1}^H \frac{\^C_h\^C_h^T}{\^d_h}.
\end{equation*}

\item Extract $\^\eta=(\^\eta_1,...,\^\eta_d)$: the $d$ eigenvectors of $\^M$ with largest eigenvalues.
\item $\^Q_c=I-\^\eta\^\eta^T$.
\end{enumerate}
Steps $1$ to $3$ are repeated until convergence is achieved and then $\^\eta$ is the estimated basis of the standardized CS derived from TF1. The estimated directions of the CS are $\^\Sigma^{-\frac 1 2}\^\eta$. At the end of the paper, this method is tested and compared to SIR using simulations.

\subsection{Order 2 test function: Optimality among the measurable functions}\label{s52}

Here we have a different approach than for TF1, we aim at finding the optimal $\psi$ such that the variance error is minimal. Recall that $M_{\psi}=\E[ZZ^T\psi(Y)]$, we have already proved that the eigenvectors of this matrix can be decomposed into two blocks : the one associated to the eigenvalue $\lambda_{\psi}^{\ast}$ and the other which necessarily belongs to $E_c$. Therefore, $\^P_n$ is derived from the eigenvectors associated to the eigenvalues different from $\lambda_{\psi}^{\ast}$, and so we decided to express $\^P_n$ in the following way. Theorem \ref{apsi} guarantees the existence of $\psi$ such that $E_{\psi}=E_c$. Based on this result, suppose that we are able to differentiate each eigenvalue associated to an eigenvector in $E_c$ from $\lambda_{\psi}^{\ast}$. Then we can find a contour $\mathcal{C}$ which encloses the eigenvalues different from $\lambda_{\psi}^{\ast}$, and finally we can write $P_c$ and its estimator $\^P_n$ as
\begin{equation*}
P_c= \oint_{\mathcal{C}} (Iz-M_{\psi})^{-1} \dd z \quad\text{and}\quad \^P_n= \oint_{\mathcal{C}} (Iz-\^M_{\psi})^{-1} \dd z,
\end{equation*}
where $\^M_{\psi}=\frac{1}{n}\sum_{i=1}^n Z_iZ_i^T\psi(Y_i)$. As we did for TF1, we aim at minimizing the $MSE$ through the quantity $\E[\tr(Q_c\^P_n)]$ (see equation (\ref{mse})). We first calculate the limit in law of the random variable $n \tr(Q_c\^P_n)$, as $n$ goes to infinity and then we derive its expectation. The next proposition is dedicated to this calculus. 

\begin{proposition}\label{calvar}
Let $W_{\psi}$ be the limit in law of the random variable $n \tr(Q_c\^P_n)$, then 
\begin{equation*}
\E[W_{\psi}]=\tr\left( \E\left[ZZ^T \|Q_cZ\|^2 \psi(Y)^2\right] P_c(P_cM_{\psi}-I\lambda_{\psi}^{\ast})^{-2} \right).
\end{equation*}
\end{proposition}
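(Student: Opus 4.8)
The plan is to realize $\^{P}_n$ as an analytic perturbation of $P_c$ and to extract the term of exact order $n^{-1}$, which is the only one that survives after multiplication by $n$. Write $\Delta_n=\^{M}_\psi-M_\psi$ and $R(z)=(Iz-M_\psi)^{-1}$. By the central limit theorem $\sqrt n\,\Delta_n$ converges in law to a symmetric Gaussian matrix $G$ whose covariance is that of the random matrix $ZZ^T\psi(Y)$. Since the contour $\mathcal{C}$ stays at a fixed distance from the spectrum of $M_\psi$, for $n$ large the Neumann expansion
\begin{equation*}
(Iz-\^{M}_\psi)^{-1}=R+R\Delta_nR+R\Delta_nR\Delta_nR+\cdots
\end{equation*}
holds uniformly on $\mathcal{C}$; integrating term by term gives $\^{P}_n=P_c+\oint_{\mathcal{C}}R\Delta_nR\,\dd z+\oint_{\mathcal{C}}R\Delta_nR\Delta_nR\,\dd z+\cdots$.

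The key structural input is Theorem \ref{th2}: $E_c^\perp$ is the eigenspace of $M_\psi$ for the eigenvalue $\lambda_\psi^\ast$, so in the basis $(P_1,P_2)$ of $E_c\oplus E_c^\perp$ the matrix $M_\psi$ is block diagonal, $R(z)=\diag\big((Iz-A)^{-1},(z-\lambda_\psi^\ast)^{-1}I\big)$ with $A=P_1^TM_\psi P_1$, and $Q_c$ selects the lower-right block. First I would check the two lowest orders: $\tr(Q_cP_c)=0$, while the first-order term reduces to $\oint_{\mathcal{C}}(z-\lambda_\psi^\ast)^{-2}\tr(\Delta_{n,22})\,\dd z$, which vanishes because $(z-\lambda_\psi^\ast)^{-2}$ is holomorphic inside $\mathcal{C}$ (the point $\lambda_\psi^\ast$ lies outside). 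Hence the leading contribution is quadratic in $\Delta_n$; multiplying by $n$ and invoking Slutsky's theorem with $\sqrt n\,\Delta_n\overset{d}{\longrightarrow}G$ identifies the limit in law as $W_\psi=\oint_{\mathcal{C}}\tr\big(Q_cR(z)GR(z)GR(z)\big)\,\dd z$, exactly as in the proof of Proposition \ref{alpha}.

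Next I would expand the lower-right block to obtain $\tr(Q_cRGRGR)=(z-\lambda_\psi^\ast)^{-2}\tr\big(G_{21}(Iz-A)^{-1}G_{12}\big)+(z-\lambda_\psi^\ast)^{-3}\tr(G_{22}^2)$, where $G_{12}=P_1^TGP_2$ and $G_{21}=G_{12}^T$; the triple-pole term integrates to $0$. Taking expectations, the decisive simplification comes again from Theorem \ref{th2}, which gives $P_1^TM_\psi P_2=0$ and hence $\E[(P_1^TZ)_a(P_2^TZ)_b\psi(Y)]=0$; therefore the cross-mean term drops out of the Gaussian covariance and $\E[(G_{12})_{ab}(G_{12})_{a'b}]=\E[(P_1^TZ)_a(P_1^TZ)_{a'}(P_2^TZ)_b^2\psi(Y)^2]$. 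Summing over $b$ turns $(P_2^TZ)_b^2$ into $\|Q_cZ\|^2$, so that $\E[W_\psi]=\oint_{\mathcal{C}}(z-\lambda_\psi^\ast)^{-2}\tr\big((Iz-A)^{-1}S\big)\,\dd z$ with $S=P_1^T\E[ZZ^T\|Q_cZ\|^2\psi(Y)^2]P_1$.

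The last step is the contour integral, evaluated by the holomorphic functional calculus: since $(z-\lambda_\psi^\ast)^{-2}$ is holomorphic on a neighbourhood of $\sigma(A)$ and $\mathcal{C}$ encircles $\sigma(A)$ only, one gets $\oint_{\mathcal{C}}(z-\lambda_\psi^\ast)^{-2}(Iz-A)^{-1}\,\dd z=(A-\lambda_\psi^\ast I)^{-2}$ (the factor $1/(2\pi i)$ being absorbed into the $\oint$ convention already used for $P_c$), whence $\E[W_\psi]=\tr\big(S(A-\lambda_\psi^\ast I)^{-2}\big)$. Rewriting $A-\lambda_\psi^\ast I$ as $P_cM_\psi-\lambda_\psi^\ast I$ restricted to $E_c$ and re-embedding via $P_c$ recovers the announced expression. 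I expect the main obstacle to be the two delicate points on which the whole computation hinges: proving that the first-order term is exactly zero (so that $n\tr(Q_c\^{P}_n)$ has a finite nondegenerate limit rather than blowing up like $\sqrt n$), and justifying rigorously the uniform perturbation expansion on $\mathcal{C}$ together with the covariance reduction that uses Assumptions \ref{linearity} and \ref{variance} through $P_1^TM_\psi P_2=0$.
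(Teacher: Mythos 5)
Your proposal is correct and follows essentially the same route as the paper's own proof: the contour-integral representation of $P_c$ and $\^P_n$, a resolvent perturbation expansion in which the first-order term vanishes because $(z-\lambda_\psi^{\ast})^{-2}$ is holomorphic inside $\mathcal{C}$, evaluation of the remaining contour integral to produce $(P_cM_\psi-I\lambda_\psi^{\ast})^{-2}$, and computation of the expectation of the quadratic term using the key fact $Q_cM_\psi P_c=0$ (your $P_1^TM_\psi P_2=0$) together with the covariance of $ZZ^T\psi(Y)$. The only differences are presentational: you use an infinite Neumann series and block coordinates in the basis $(P_1,P_2)$, where the paper uses an exact two-term resolvent identity (which sidesteps the uniform-convergence issue you flag) and the commutation relation $Q_c(Iz-M_\psi)^{-1}=Q_c/(z-\lambda_\psi^{\ast})$.
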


\begin{proof}
We have 
\begin{eqnarray*}
Q_c\^P_n &=& Q_c  (\^P_n-P_c)\\
&=& Q_c \oint_{\mathcal{C}} (Iz-\^M_{\psi})^{-1} - (Iz-M_{\psi})^{-1}\dd z\\
&=& Q_c \oint_{\mathcal{C}} (Iz-\^M_{\psi})^{-1}(M_{\psi}-\^M_{\psi}) (Iz-M_{\psi})^{-1}\dd z,
\end{eqnarray*} 
and then, we derive
\begin{multline}\label{multi}
Q_c\^P_n = Q_c \oint_{\mathcal{C}} (Iz-M_{\psi})^{-1}(M_{\psi}-\^M_{\psi})(Iz-M_{\psi})^{-1}\dd z \\ + Q_c \oint_{\mathcal{C}}(Iz-\^M_{\psi})^{-1} (M_{\psi}-\^M_{\psi})(Iz-M_{\psi})^{-1} (M_{\psi}-\^M_{\psi})  (Iz-M_{\psi})^{-1}\dd z.
\end{multline} 
Consider the trace of the first term of equation (\ref{multi}), since $Q_c$ and $(Iz-M_{\psi})^{-1}$ commute we have
\begin{multline*}
\tr\left(Q_c \oint_{\mathcal{C}} 
(Iz-M_{\psi})^{-1}(M_{\psi}-\^M_{\psi})(Iz-M_{\psi})^{-1}\dd 
z\right)=\\ \tr\left((M_{\psi}-\^M_{\psi})\oint_{\mathcal{C}} Q_c  (Iz-M_{\psi})^{-2}\dd z \right).
\end{multline*}
Besides, it is clear that 
\begin{equation}\label{diag}
Q_c(Iz-M_{\psi})^{-1}=\frac{Q_c}{(z-\lambda_{\psi}^{\ast})},
\end{equation}
and recalling that $\lambda_{\psi}^{\ast}$ is outside $\mathcal{C}$, we 
have $\oint_{\mathcal{C}} \frac{1}{(z-\lambda_{\psi}^{\ast})^{-2}} \dd 
z=0$ and then (\ref{multi}) implies that
\begin{multline*}
 \tr\left( Q_c\^P_n\right) = \tr\bigg(Q_c 
 \oint_{\mathcal{C}}(Iz-\^M_{\psi})^{-1} 
 (M_{\psi}-\^M_{\psi})(Iz-M_{\psi})^{-1}\\
  (M_{\psi}-\^M_{\psi})  (Iz-M_{\psi})^{-1}\dd z \bigg).
\end{multline*}
Denote by $\Delta$ the limit in law of $\sqrt{n}(M_{\psi}-\^M_{\psi})$, 
since $\^M$ goes to $M$ in probability, Slutsky's Theorem implies the 
convergence $n\tr\left( Q_c\^P_n\right) \overset{d}{\longrightarrow}  
W_{\psi}  $ with 
\begin{equation*}
W_{\psi}=\tr\left( Q_c 
\oint_{\mathcal{C}} (Iz-M_{\psi})^{-1}\Delta (Iz-M_{\psi})^{-1}\Delta  
(Iz-M_{\psi})^{-1}\dd z\right)
\end{equation*}
Here we use equation (\ref{diag}) to derive 
\begin{equation}\label{cal1}
W_{\psi} =\tr\left(Q_c\Delta\oint_{\mathcal{C}}\frac{(Iz-M_{\psi})^{-1}}{(z-\lambda_{\psi}^{\ast})^2}\dd z\Delta Q_c\right),
\end{equation}
and the integral inside (\ref{cal1}) can be calculated the following way. Splitting it into two terms and using (\ref{diag}), we obtain
\begin{eqnarray*}
\oint_{\mathcal{C}}\frac{(Iz-M_{\psi})^{-1}}{(z-\lambda_{\psi}^{\ast})^2} \dd z &=& \oint_{\mathcal{C}}\frac{P_c(Iz-M_{\psi})^{-1}}{(z-\lambda_{\psi}^{\ast})^2}\dd z +\oint_{\mathcal{C}}\frac{Q_c(Iz-M_{\psi})^{-1}}{(z-\lambda_{\psi}^{\ast})^2}\dd z\\
&=& \oint_{\mathcal{C}}\frac{P_c(Iz-M_{\psi})^{-1}}{(z-\lambda_{\psi}^{\ast})^2}\dd z +Q_c\oint_{\mathcal{C}}\frac{1}{(z-\lambda_{\psi}^{\ast})^3}\dd z,
\end{eqnarray*}
the last term in the previous equation is clearly equal to $0$. Concerning the first term, since for all $k=1,...,d$, we have
\begin{eqnarray*}
P_c\oint_{\mathcal{C}}\frac{(Iz-M_{\psi})^{-1}}{(z-\lambda_{\psi}^{\ast})^2} \dd z \eta_k &=& \eta_k \oint_{\mathcal{C}}\frac{(z-\lambda_{\psi}(\eta_k))^{-1}}{(z-\lambda_{\psi}^{\ast})^2} \dd z\\
&=& \frac{\eta_k}{(\lambda_{\psi}(\eta_k)-\lambda_{\psi}^{\ast})^{2}}\\
&=& P_c(P_cM_{\psi}-I\lambda_{\psi}^{\ast})^{-2} \eta_k,
\end{eqnarray*}
and since all the vectors in $E_c^{\perp} $ belong to the kernel of this matrix, we get
\begin{equation*}
P_c\oint_{\mathcal{C}}\frac{Iz-M_{\psi})^{-1}}{(z-\lambda_{\psi}^{\ast})^2} \dd z = P_c(P_cM_{\psi}-I\lambda_{\psi}^{\ast})^{-2}.
\end{equation*}
Injecting it in (\ref{cal1}), this leads us to  
\begin{equation*}
W_{\psi} = \tr\left( \Delta Q_c \Delta P_c(P_c M_{\psi}-I\lambda_{\psi}^{\ast})^{-2}\right),
\end{equation*}
and it remains to calculate its expectation. The linearity condition implies that $Q_cM_{\psi}P_c=0$, and we have
\begin{eqnarray*}
\E[\Delta Q_c \Delta P_c] &=& - \lim_{n\rightarrow \infty} n\E\left[(M_{\psi}-\^{M}_{\psi})Q_c\^{M}_{\psi}P_c\right]\\
&=& \lim_{n\rightarrow \infty} n\E\left[\^{M}_{\psi}Q_c\^{M}_{\psi}P_c\right]\\
&=& \E[ Z Z^T P_c \| Q_c Z\|^2 \psi(Y)^2],
\end{eqnarray*}
which complete the proof of the proposition.
\end{proof}

Proposition $\ref{calvar}$ provides us the expression of the quantity to minimize with respect to the function $\psi$. The next lines are attached to find $\psi$ such that $\E[W_{\psi}]$ is minimal. This informal calculation leads us to a fixed point equation whose solution is expected to be the minimum of $\E[W_{\psi}]$. Thanks to proposition \ref{calvar} the quantity to minimize can be written as
\begin{equation*}
\E[W_{\psi}]=\tr( \E[ZZ^TP_c \|Q_cZ\|^2 \psi(Y)^2] (P_cM_{\psi}-I\lambda_{\psi}^{\ast})^{-2} ),
\end{equation*}
or with the notations $A=ZZ^TP_c \|Q_cZ\|^2$ and $B=P_cZZ^T- \frac{\|Q_cZ\|^2}{p-d} I$,
\begin{equation*}
\E[W_{\psi}]=\tr\left( \E[A\psi(Y)^2]\ \E[B\psi(Y)]^{-2} \right).
\end{equation*}
Thus we are looking for $\psi$ such that for every bounded measurable function $\delta$,
\begin{equation*}
 \left. \parti{t} \E[W_{\psi+t\delta}]\right|_{t=0}=0,
\end{equation*}
or equivalently,  
\begin{multline*}
\E\bigg[ 2\tr\left(A\delta \psi \E[B\psi]^{-2} \right)\\ -\tr\left(\E[A\psi^{2}] \E[B \psi]^{-1} \{ B \delta  \E[B \psi]^{-1} 
+\E[B \psi]^{-1}  B \delta \} \E[B \psi]^{-1}\right) \bigg ]=0,
\end{multline*}
where $\delta$ and $\psi$ stand for $\delta(Y)$ and $\psi(Y)$. Define the functions $A(Y)= \E[A|Y]$ and $B(Y)=\E[B|Y]$. Since the previous equation should be true for any $Y$-measurable random variable $\delta(Y)$, we derive
\begin{multline*}
2\tr\left(A(Y) \psi(Y) \E[B\psi]^{-2} \right)\\-\tr\left(\E[A\psi^{2}] \E[B \psi]^{-1} \{ B(Y)   \E[B \psi]^{-1} 
+ \E[B \psi]^{-1}  B(Y) \}  \E[B \psi]^{-1} \right)=0\ \ \ \ \text{a.s.,}
\end{multline*}
which leads to the implicit equation 
\begin{equation}\label{psi}
\psi(y)=\frac{    \tr\left(\E[B \psi]^{-1} \E[A\psi^{2}]\E[B \psi]^{-1}\{\E[B \psi]^{-1} B(y)   +   B(y) \E[B \psi]^{-1}\}  \right)   }{  2\tr(  A(y) \E[B\psi]^{-2}  )  }.
\end{equation}
This solution describes the optimal $\psi$ function to perform TF2. To find this $\psi$, we propose an iteration of the point fixed equation (\ref{psi}). Before we state a more accurate algorithm to compute TF2, we set out a new way to express (\ref{psi}). As we highlighted at the beginning of the section, based on Theorem \ref{apsi} we suppose that $\psi$ is such that we can reach the eigenvectors $\eta_{\psi}=(\eta_1,...,\eta_d)\in \R^{p\times d}$ of $M_{\psi}$ that are in $E_c$. Therefore we can write $P_c=\eta_{\psi} \eta_{\psi}^T$ and by definition of $\eta_{\psi}$, we have 
\begin{equation}\label{vp}
\E[B\psi(Y)]^{-1} \eta_{\psi} = \eta_{\psi} D_{\psi},
\end{equation}
where $D_{\psi}=\diag_k(\lambda_{\psi}(\eta_k)-\lambda_{\psi}^{\ast})^{-1}$. Besides, a simple use of the linearity condition provides that $\E[\eta^T ZZ^T|Y]=\E[\eta^TZZ^TP_c|Y]$ for every $\eta\in E_c$. Consequently, we derive that
\begin{equation}\label{lcond}
\eta_{\psi}^T B(y) = \eta_{\psi}^T B(y)P_c.
\end{equation}
Then with the introduced notations and using (\ref{vp}) and (\ref{lcond}), we obtain this other formulation of (\ref{psi}), 
\begin{equation*}
\psi(y)=\frac{ \tr\left(D_{\psi} A_{\psi}D_{\psi} \{D_{\psi} \widetilde{B}(y)   +   \widetilde{B}(y)D_{\psi}\}  \right)   }{  2\tr\left(  \widetilde{A}(y) D_{\psi}^{2}  \right)  },
\end{equation*}
where 
\begin{equation*}
A_{\psi}=\E\left[\eta_{\psi}^TZZ^T\eta_{\psi}\|Q_cZ\|^2\psi(Y)^2\right],\ \widetilde{A}(Y)=\eta_{\psi}^T A(y) \eta_{\psi},\ \widetilde{B}(y)=\eta_{\psi}^T B(y) \eta_{\psi},
\end{equation*}
are $d\times d$ matrices. Using the symmetry of the matrices $A_{\psi}$ and $\widetilde{B}(y)$, and some well-known properties of the trace, we obtain
\begin{equation}\label{psi2}
\psi(y)=\frac{ \tr\left(D_{\psi} A_{\psi}D_{\psi} \widetilde{B}(y)D_{\psi}  \right)   }{  \tr\left( \widetilde{A}(y) D_{\psi}^{2}  \right)  }.
\end{equation}
Since $\widetilde{A}$ and $\widetilde{B}$ are unknown function, we use a slicing approximation and it gives
\begin{equation}\label{aproxpsi2}
\psi(y)=\sum_h \frac{ \tr\left(D_{\psi} A_{\psi}D_{\psi}  \widetilde{B}_h D_{\psi}  \right)   }{  \tr\left(  \widetilde{A}_h   D_{\psi}^{2}  \right)  }\mathds{1}_{\{y\in I(h)\}},
\end{equation}
where $\widetilde{A}_h=\E[\widetilde{A}(Y)\mathds{1}_{\{y\in I(h)\}}]$ and $\widetilde{B}_h=\E[\widetilde{B}(Y) \mathds{1}_{\{y\in I(h)\}}]$. Now we set out the TF2 method based on the family of indicator functions. In practice, the fixed point equation (\ref{psi2}) gives better results than (\ref{psi}), therefore we use (\ref{aproxpsi2}) to compute TF2. We propose the following algorithm that describes the iteration needed to implement our method. To be more comprehensive, we based the algorithm on the weights $\alpha_h$ instead of the function $\psi_h(y)=\sum_h\alpha_h\mathds 1 _{\{y\in I(h)\}}$. Besides $\^A_{\^\psi}$ and $\^D_{\^\psi}$ are noted $\^A$ and $\^D$, and we will need
\begin{equation*}
M_h=\E[ZZ^T\mathds{1}_{\{Y\in I(h)\}}]\quad \text{and} \quad \lambda_h = \E\left[\frac {\|Q_cZ\|^2}{p-d}\mathds{1}_{\{Y\in I(h)\}}\right].
\end{equation*}
Because $\lambda_h$ is the eigenvalue associated to the space $E_c^{\perp}$, we estimate it the following way, supposing that $\dim(E_c)<\dim(E_c^{\perp})$.

\textbf{TF2 Algorithm:}
\begin{enumerate}
\setcounter{enumi}{-1}
\item Each $I(h)$ contains $\frac n H$ observations. Compute
\begin{equation*}
\^M_h = \frac 1 n\sum_{i=1}^n Z_i Z_i^T \mathds{1}_{\{Y_i\in 
I(h)\}},\quad  
\^\lambda_h = \text{median}(\lambda \in \text{spectrum}(\^M_h)),
\end{equation*}
and initialize $\^\alpha_{h}\sim \mathcal{U}[0,1]$ for every $h=1,...,H$.

\item Identify the eigenvectors $\^\eta=(\^\eta_1,...,\^\eta_d)\in E_c$ of $\^M=\sum_h 
\^\alpha_h\^M_h$. 

\item\label{st2} Derive $\^D=\diag_k(\^\lambda_{\^\psi}(\^\eta_k) - \^\lambda_{\^\psi}^{\ast})^{-1}$, $\^Q_c=I-\^\eta\^\eta^T$ and 
\begin{equation*}
\^A=\sum_h \^\alpha_h \^\eta^T\^A_h \^\eta,\quad \text{with}\ \^A_h = 
 \frac 1 n \sum_{i=1}^n Z_i Z_i^T \|\^Q_c Z_i\|^2 \mathds{1}_{\{Y_i\in I(h)\}}.
\end{equation*}

\item Compute\begin{equation*}
\^\alpha_h= \frac{ \tr\left(\^D^2 \^A\^D\ \ (\^\eta^T\^M_h 
\^\eta -\^\lambda_h I) \right)   }
{  \tr\left(\^D^{2}\ \ \^\eta^T \^A_h  \^\eta  \right)  }.
\end{equation*}
\end{enumerate}
Repeat the last three steps until the convergence is achieved. The resulting function $\^\psi$ is an estimate of the solution of the fixed point equation. Finaly the set of vectors $\^\eta$ form an estimated basis of the standardized CS. The space generated by $\^\Sigma^{-\frac{1}{2}}\^\eta$ provides an estimation of the CS by TF2.

\begin{remark}\label{practicetf2}
A crucial point need to deserve our attention. It concerns the way we identify the eigenvectors of $M_{\psi}$ 
that belong to $E_c$ and a fortiori their associated eigenvalues. It intervenes at each iteration of our algorithm to estimate $D_{\psi}$ and $\eta_{\psi}$. The theoretical background of the TF2 method advocates for an identification process based on the eigenvalues more than the eigenvectors. Indeed, as it is pointed out at the end of section \ref{s3} the eigenvalues of $M_{\psi}$ associated to eigenvectors of $E_c^{\perp}$ are all equal. We tried to base an algorithm on this fact but it appeared that it was not robust to small samples. So that we choose to develop another one which takes into account the nature of the eigenvectors of $M_{\psi}$. Let $\eta$ be an eigenvector of $M_{\psi}$, we based a new identification process on the dependence between $(\eta^T Z)$ and $Y$. We propose to compare the Pearson's chi-square statistic of the test of independence between $(\eta^TZ)$ and $Y$. Therefore, for each eigenvector we divide the range of $(\eta^TZ)$ into $H$ slices noted $J(h)$ and we calculate
\begin{equation}\label{ident}
S(\eta)= \sum_{h,h'}\frac{ \left(p_{hh'}-\overline{p_{hh'}}^h\ \overline{ p_{hh'}}^{h'}\right)^2   }{\overline{p_{hh'}}^h\ \overline{ p_{hh'}}^{h'}} 
\end{equation}
where $p_{h,h'}=\frac{1}{n} \sum_{i=1}^n \mathds{1}_{\{Y_i \in I(h)\}} \mathds{1}_{\{(\eta^TZ_i)\in J(h')\}}$. Then the $d$ eigenvectors of $M_{\psi}$ associated to the largest values of $S$ are identified as being in $E_c$. As a consequence, at step \ref{st2} of the TF2 Algorithm, the $\^\lambda_{\^\psi}(\^\eta_k)$'s are the eigenvalues of $\^M$ associated to the eigenvectors $\^\eta_k$'s with the $d$ largest values of $S$, $\lambda_{\^\psi}^{\ast}$ is the median over the other eigenvalues. In the next section dedicated to simulations, criterion (\ref{ident}) has been used to compute TF2. 
\end{remark}

\section{Simulations}\label{s7}
In this section, we first compare the performance of the order $1$ test function variance minimization with the performance of the SIR estimator. Then, we compare some order $2$ methods through pathological models for order $1$ methods (see example \ref{patho}). To measure the performance of a method we evaluate the error between the CS and its estimate with the following distance: for two subspace $E_1$ and $E_2$, if $P_1$ and $P_2$ are their respective orthogonal projection, the distance between $E_1$ and $E_2$ is  
\begin{equation}\label{crit}
\text{Dist}(E_1,E_2)=\|P_1-P_2\|^2,
\end{equation}
where $\|\cdot\|$ stands for the Frobenius norm.

Besides, since TF1 and TF2 are performed with the family of indicator functions, we have to discretize the response into $H$ slices. The slices are built in such a way that each slice contains the same number of observations.   

\subsection{Order $1$ test function}
Let us consider the case where the predictors have a gaussian distribution. Clearly $P_cZ$ and $Q_cZ$ are two independent random vectors and then $\E[\|Q_cZ\|^2 |Y]=\E[\E[\|Q_cZ\|^2|P_c Z]|Y ]=p-d$. Therefore $\spann (M_{TF1})=\spann (M_{SIR})$ and TF1 provides exactly the same estimator as SIR. Simulations made in this case highlight the similarity between both methods and are not presented here.

Consequently, to point out the differences between these two methods, we generate non-gaussian predictors. Taking $X=\rho U$ where $U$ is a uniformly distributed vector on the unit sphere of $\R^p$ independent of $\rho$, which is a real random variable. A first point is that $X$ has a spherical distribution. Moreover, we take
\begin{equation}\label{rho}
\rho =  \epsilon\ |10+0.05 W_1|\ +\ (1-\epsilon)\ |30+0.05 W_2|, 
\end{equation}
with $W_1\sim \mathcal{N} (0,1)$, $W_2\sim\mathcal{N}(0,1)$ and $\epsilon \sim \mathcal{B}(\frac 1 2) $. We performed SIR and TF1 on the following two models. Model I is derived from \citet*{li1991} and considered in many articles on the subject,
\begin{eqnarray*}
\text{Model I:} \quad\quad Y &=& \frac {X_1}{ 0.5+(X_2+1.5)^2} + 0.5\varepsilon\\
\text{Model II:} \quad\quad Y &=& \text{sign}(X_2)|X_1/2+5|+0.5\varepsilon,
\end{eqnarray*}
where $\varepsilon \sim \mathcal{N}(0,1)$. We have to standardize $X$ into $Z$ to compute TF1. Clearly, the variance of $X$ is proportional to the identity matrix, then the standardized directions are the same than the non-standardized one. For models I and II, directions to estimate are $(1,0,...,0)^T$ and $(0,1,0,...,0)^T$. 

To be more comprehensive, for each model we compute both methods with some different configuration of the parameters $(n,p,H)$ which are taken as $(100,6,5)$, $(500,10,10)$ and $(1000,20,20)$. For each configuration, we perform $100$ simulated random samples. Some boxplots of the distances measured between the estimated and the true CS are presented in figure \ref{fig1}. 

\begin{figure}
\includegraphics[width=6.2cm,height=6cm]{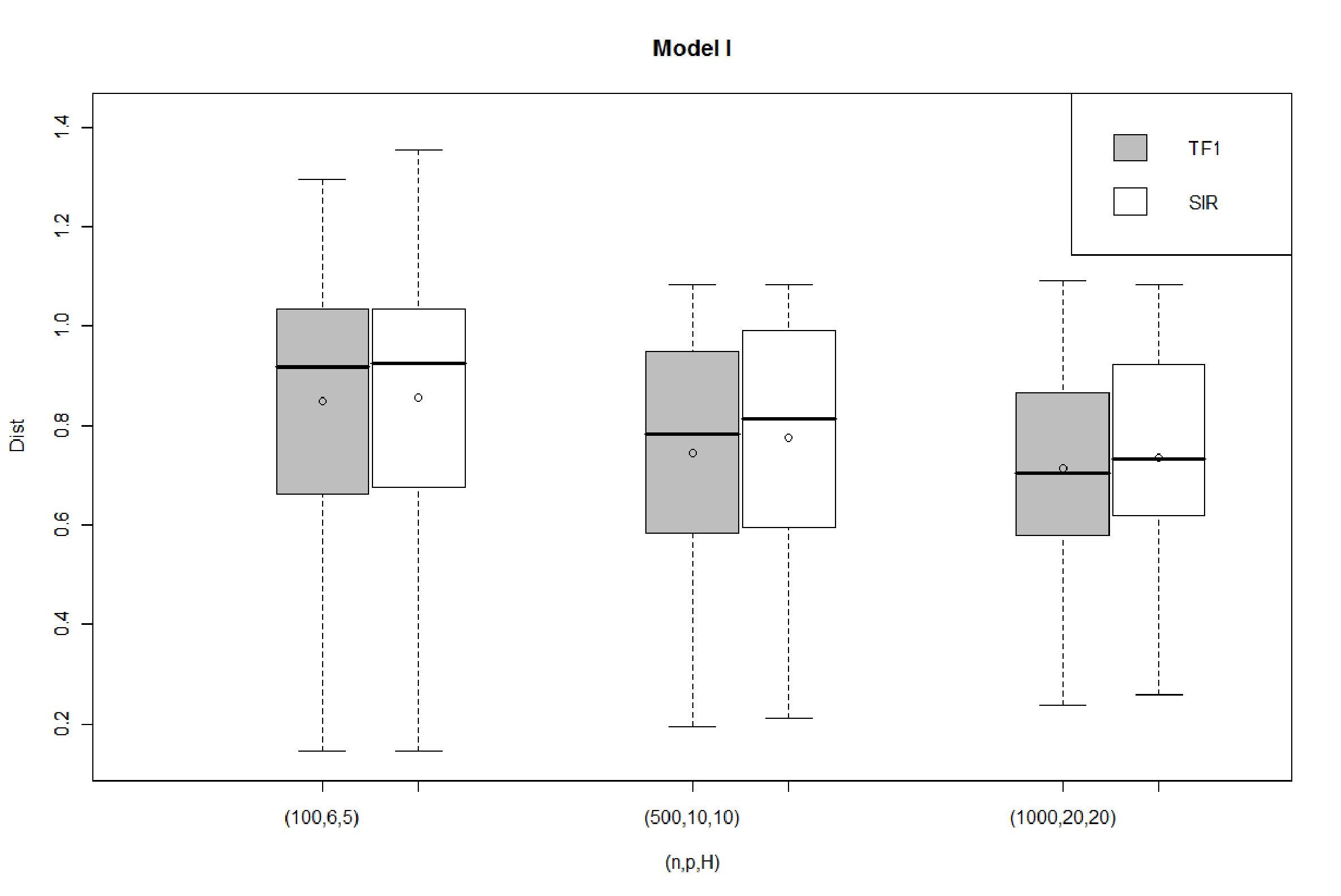} 
\includegraphics[width=6.2cm,height=6cm]{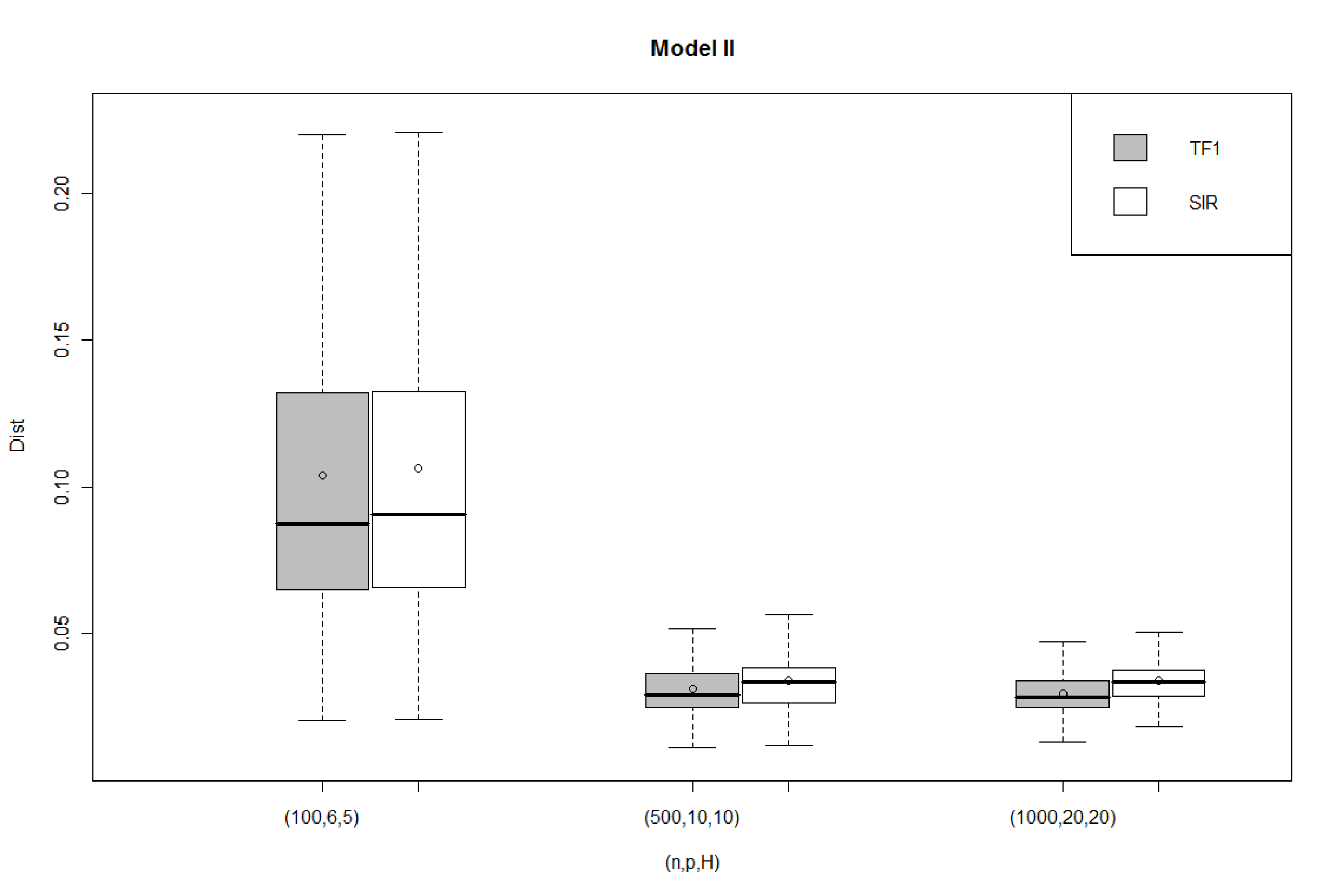} 
\caption{\label{fig1}Comparison of TF1 and SIR when $X$ has a spherical distribution.}
\end{figure}

For each model and in all the parameters configurations, TF1 performs better than SIR. Model II reflects a suitable situation for order $1$ methods because its regression function is not symmetric with respect to any of its coordinates. As a consequence the measured errors are quite small for both methods. Model I indicates a more difficult situation. Indeed, because the standard error of $X_2$ is near $16\gg1.5$, the regression function associated to model I is almost symmetric with respect to its second coordinate (see model \ref{patho}). It appears that both methods have difficulties in finding this coordinate. Figure \ref{fig1} shows that in each situation the difference between the performance of both methods increases with the sample size. Nevertheless, because of the high level of similarity between the theoretical background of these two methods, the distances presented are really close. Especially for $n=100$, where the improvement of TF1 is not really significant.  

To reach a point of view developed in the simulation study of \citet*{cook2005}, we are interested in the link between the variation of $\var(Z|Y)$ and the performance of the presented method. First, according to equation (\ref{poids}), the variation of the random variable $\E[\|Q_cZ\|^2|Y]$ is essential in studying the differences between SIR and TF1. Indeed if this one is a constant, then $d_h=\E[\|Q_cZ\|^2 \mathds{1}_{\{Y\in I(h)\}}]=(p-d) p_h$ and TF1 is the same method than SIR. Consequently, SIR estimates near optimal with respect to criterion (\ref{crit}) when the variations in $\E[\|Q_cZ\|^2|Y]$ are near $0$. Besides, if this random variable is nonconstant then also the $d_h$ and the differences between both methods are highlighted. Secondly, we can notice that $\E[\|Q_cZ\|^2|Y]$ and $\var(Z|Y)$ are strongly linked. Thanks to the well-known variance decomposition formula, we have
\begin{equation*}
\var(Z|Y)=\E[\var(Z|P_cZ)|Y]+\var(\E[Z|P_cZ]|Y),
\end{equation*}
and using the linearity condition, we obtain that
\begin{equation*}
\tr(\var(Z|Y))=\E[\|Q_cZ\|^2|Y] + \tr(\var(P_cZ|Y)).
\end{equation*}
Thus, as it was the case to distinguish IRE from SIR, it seems that the variations of $\var(Z|Y)$ has an important role to differentiate TF1 from the SIR.

As it has been studied in some recent papers like \citet*{dong2009} and \citet*{dong2010}, we introduce nonlinearity in the distribution of the predictors. Although it does not correspond to the set of assumptions required in SIR and TF1 theoretical background, it is interesting to provide the following results as an indicator of the robustness of each method. Here, predictors are generated as previously but we change $X_1$ and $X_2$ as follows,
\begin{eqnarray*}
X_1 &=& 0.2 X_3+0.2 (X_4+10)^2+0.2u,\\
X_2 &=& 0.1+0.1(X_3+X_4)+0.3X_3^2+0.2u,
\end{eqnarray*} 
where $u\sim \mathcal{N}(0,1)$. Model III is the same than model I but with the above predictors distribution. We provide boxplots of the estimation error of the $100$ simulated random sample in figure \ref{fig2}. 

\begin{figure}
\includegraphics[width=7.5cm,height=6cm]{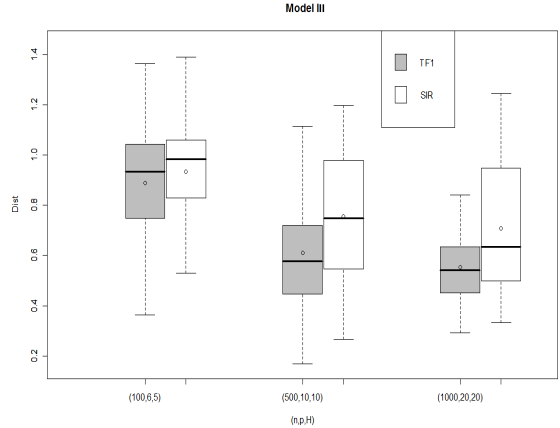} 
\caption{\label{fig2}Comparison	of TF1 and SIR when there is nonlinearity between the predictors.}
\end{figure}

In this case, figure \ref{fig2} shows a large difference between the estimation error of SIR and TF1. TF1 performed better in each case and the difference between both methods increases as $n$ is large.

\subsection{Order $2$ test function. Symmetric model}
We now compare several well-known order $2$ dimension reduction methods with TF2. Order $2$ methods we have computed include SAVE, pHd, SIR-II and DR. For the models we consider here, pHd and SIR-II do not work as well as the others. Therefore we focus on a comparison between SAVE, DR and TF2.

TF2 estimation is not as close to DR and SAVE than the order TF1 estimation is closed to SIR. The following simulations highlight this fact and as a consequence we begin this section by providing the results obtained with gaussian predictors. We considerer the following three regression models, note that model V is derived from \citet*{li2007},
\begin{eqnarray*}
\text{Model IV:}\quad\quad Y &=& 4\tanh\left(\frac{|X_1|}{ 2}\right) + 0.5\varepsilon\\
\text{Model V:}\quad\quad  Y &=& 0.4X_1^2+\sqrt {|X_2|}  +0.2\varepsilon\\
\text{Model VI:} \quad\quad  Y &=&  1.5 X_1X_2\ \varepsilon
\end{eqnarray*}
with $\varepsilon \sim \mathcal{N}(0,1)$ and $X\sim \mathcal{N}(0,I_p)$. The CS of model IV is spanned by the direction $(1,0,...,0)$, whereas in Model V and VI, it is a two dimensional subspace generated by $(1,0,...,0)$ and $(0,1,0,...,0)$. As the simulations for the order $1$, we consider different parameter configurations where each of the presented method is in a convenient situation. We simulate SAVE, DR and TF2 with $(n,p,H)$ equal to $(100,6,5)$, $(500,10,5)$ and $(1000,20,10)$. For each configuration, $100$ simulated random samples have been performed and the resulting boxplots with their averages are presented in figure \ref{fig3}.
\begin{figure}
\includegraphics[width=4.1cm,height=6cm]{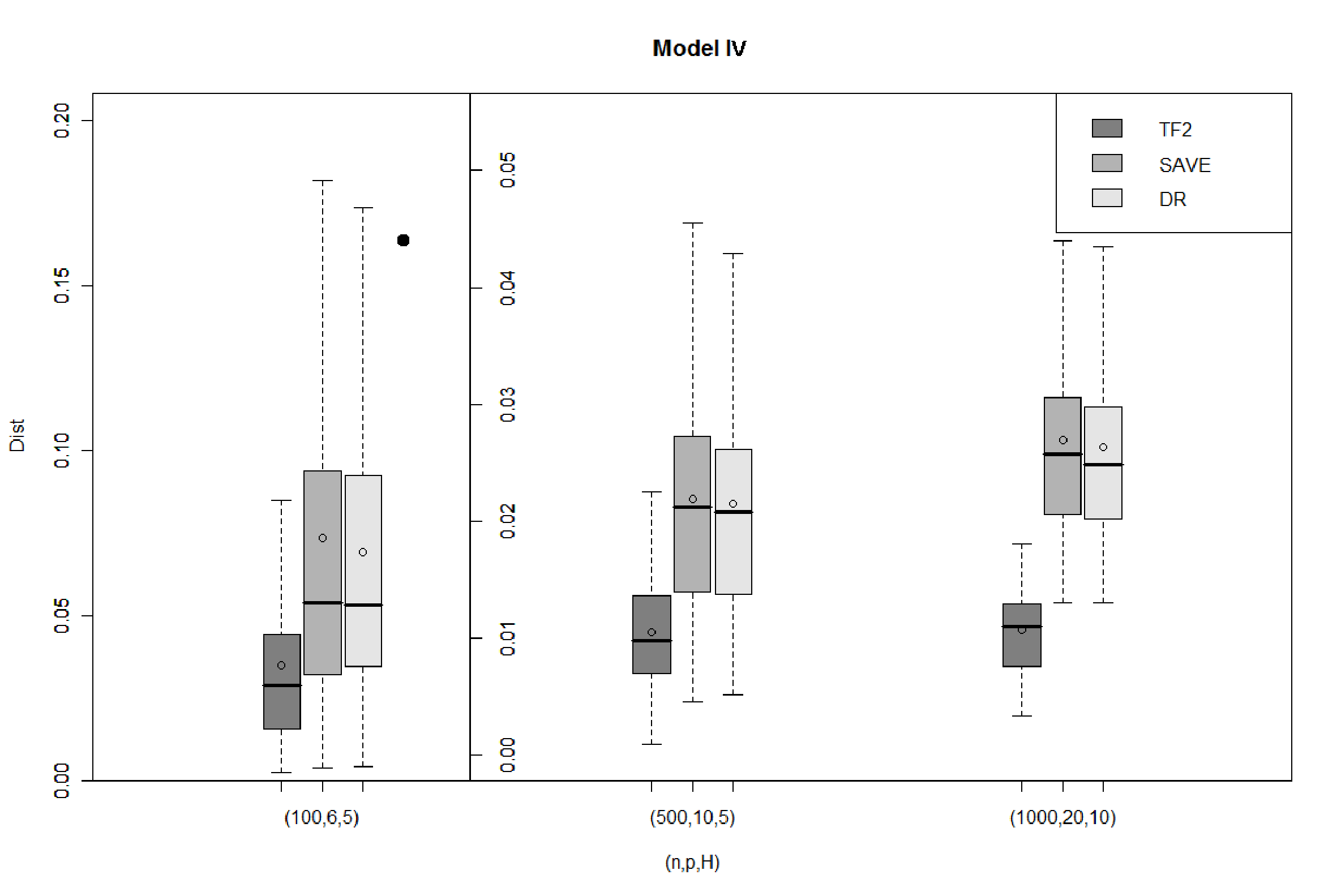} 
\includegraphics[width=4.1cm,height=6cm]{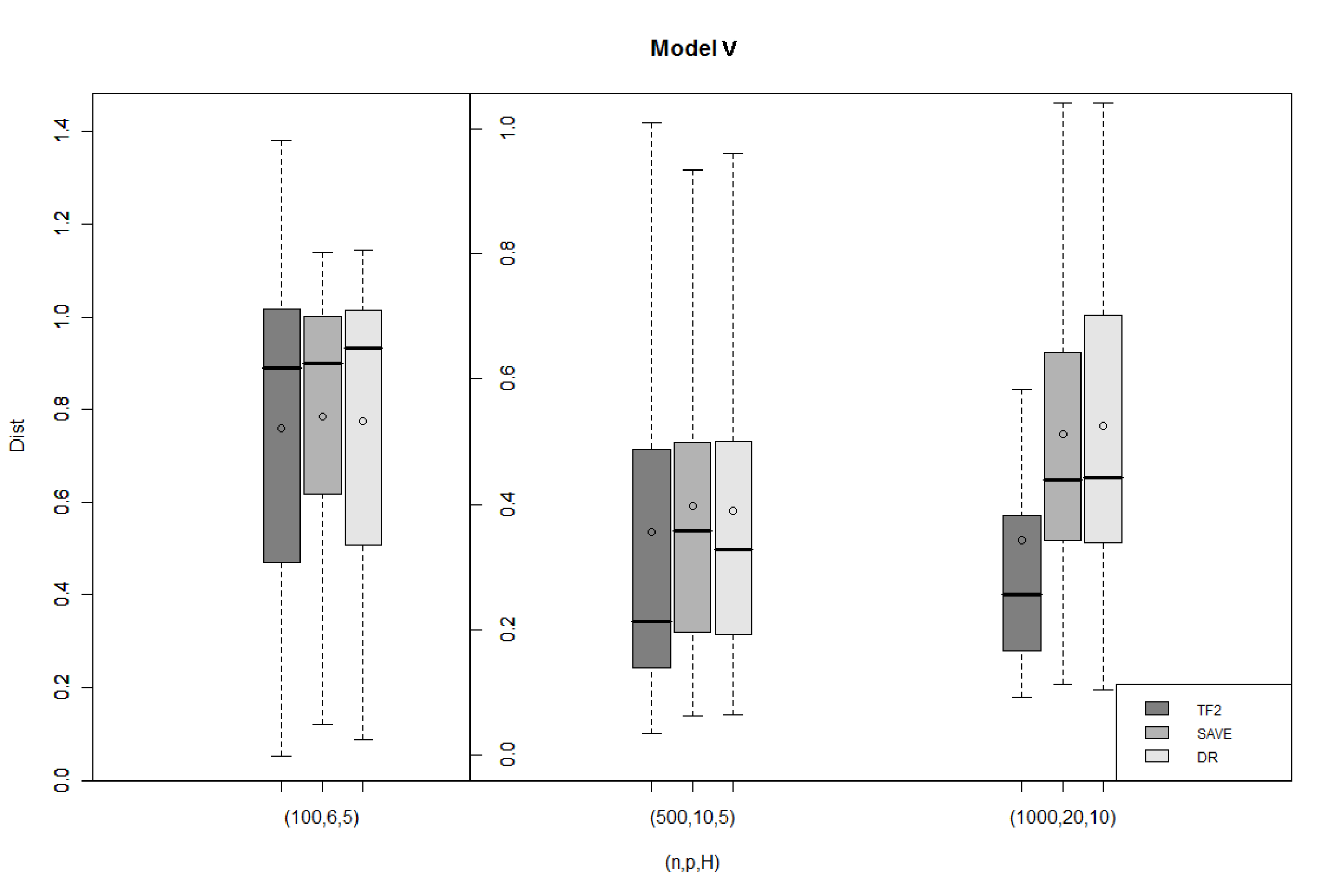} 
\includegraphics[width=4.1cm,height=6cm]{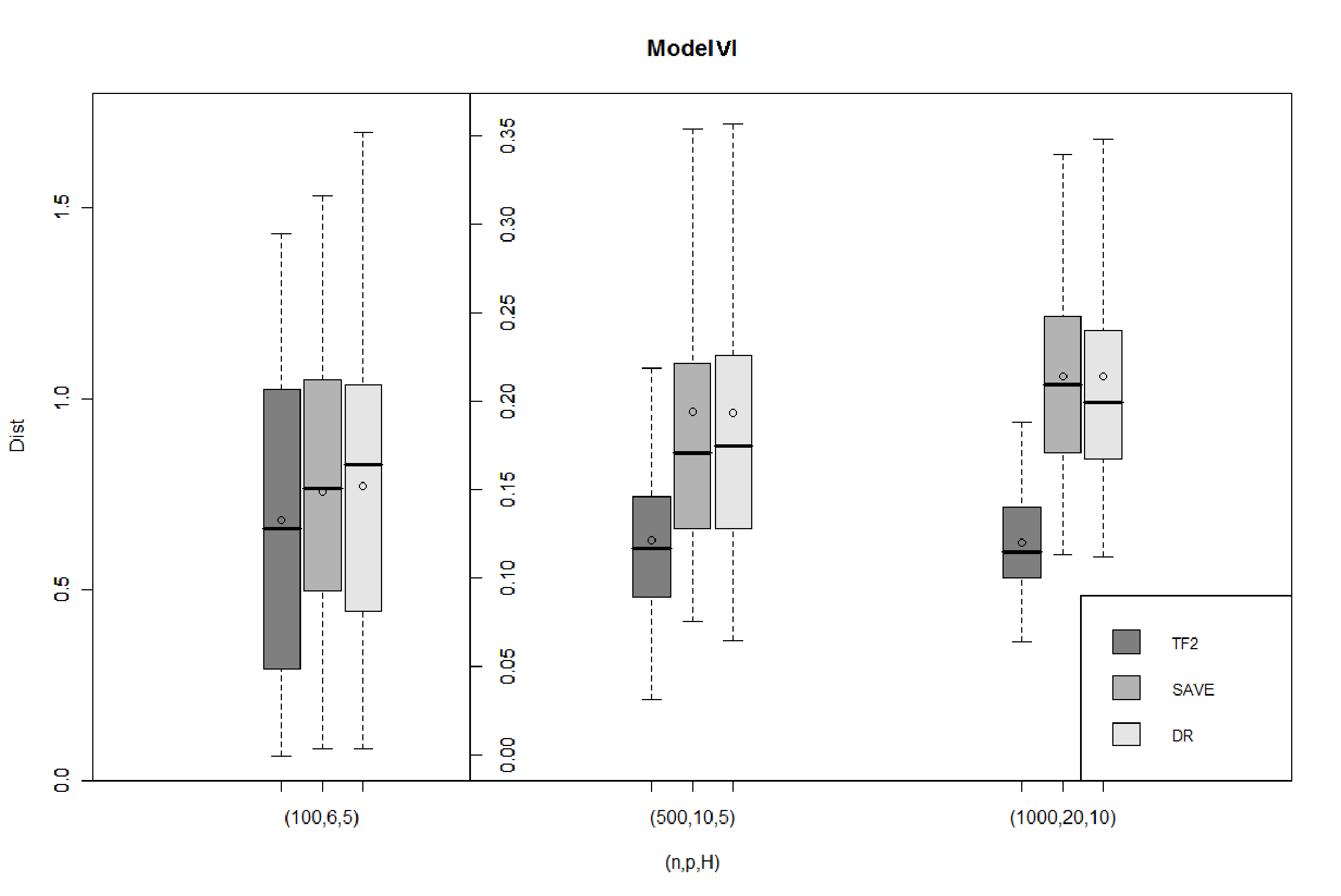} 
\caption{\label{fig3}Comparison of TF2, SAVE and DR	when $X$ has a gaussian distribution.}
\end{figure}

For all the selected models, TF2 perform better than DR and SAVE. The most significant improvement happens for model IV in which our method perform better than the others around $90\%$ of the time in each $(n,p,H)$ configuration. Note that for $n=100$, $500$, the mean of the TF2 is two times smaller than the mean of DR or SAVE. For $n=1000$ this factor goes to three. The results of the simulation for model VI are really close from model IV. Model V is a more complicated one for each method. Moreover, we have to wait $n=1000$ to remark substantial differences in the distribution of the criterion. In every model, the criterion mean of TF2 is the smallest and as $n$ is large, as the improvement of TF2 looks substantial. Besides, it is clear that for the selected models, SAVE and DR perform in a similar way.  

\begin{remark}
For our study and the development of TF2, model V was a really interesting one. In figure \ref{fig3}, for $n=100$ the mean is less than the median, and it is no longer the case for $n$ larger than $100$. This marked change in the boxplots is explained by the presence of small outliers in the first situation and large outliers in the second one. Indeed as $n$ is large, TF2 performs better but however, the mean is shifted by the presence of outliers that reflects uncommon difficult situations. As it is explain in section \ref{s52}, TF2 relies on the way to identify eigenvectors of $M_{\psi}$ that belong to $E_c$. To make that possible, a test of independence between the response and the projected predictors is conducted. Outliers of model V for $n$ equal to $500$ and $1000$ are the consequence of a bad eigenvector choice realized by this test. When $n$ is sufficiently large this no longer occurs. When the TF2 algorithm is iterated a larger number of times, it happens only very few times.       
\end{remark}

To conclude this simulation section we present the results obtained with spherical predictors. Here, $X$ is generated with the equation $X=\rho U$ where $U$ is a uniformly distributed vector on the unit sphere of $\R^p$, independent of $\rho$ defined by equation (\ref{rho}). Again we study the model IV and also the following ones,
\begin{eqnarray*}
\text{Model VII:} \quad\quad Y &=& |X_1|+\left(\frac {X_2}{ 4}\right)^2 +0.5\varepsilon\\
\text{Model VIb:} \quad\quad Y &=&  X_1X_2\ \varepsilon
\end{eqnarray*}
where $\varepsilon\sim \mathcal{N} (0,1)$. Model VI has been changed to reduce the signal to noise ratio. The directions to estimate, the parameter configuration and the number of simulated random sample are the same than in the Gaussian case studied previously. Boxplots and their associated averages are presented in figure \ref{fig4}. 
\begin{figure}
\includegraphics[width=4.1cm,height=6cm]{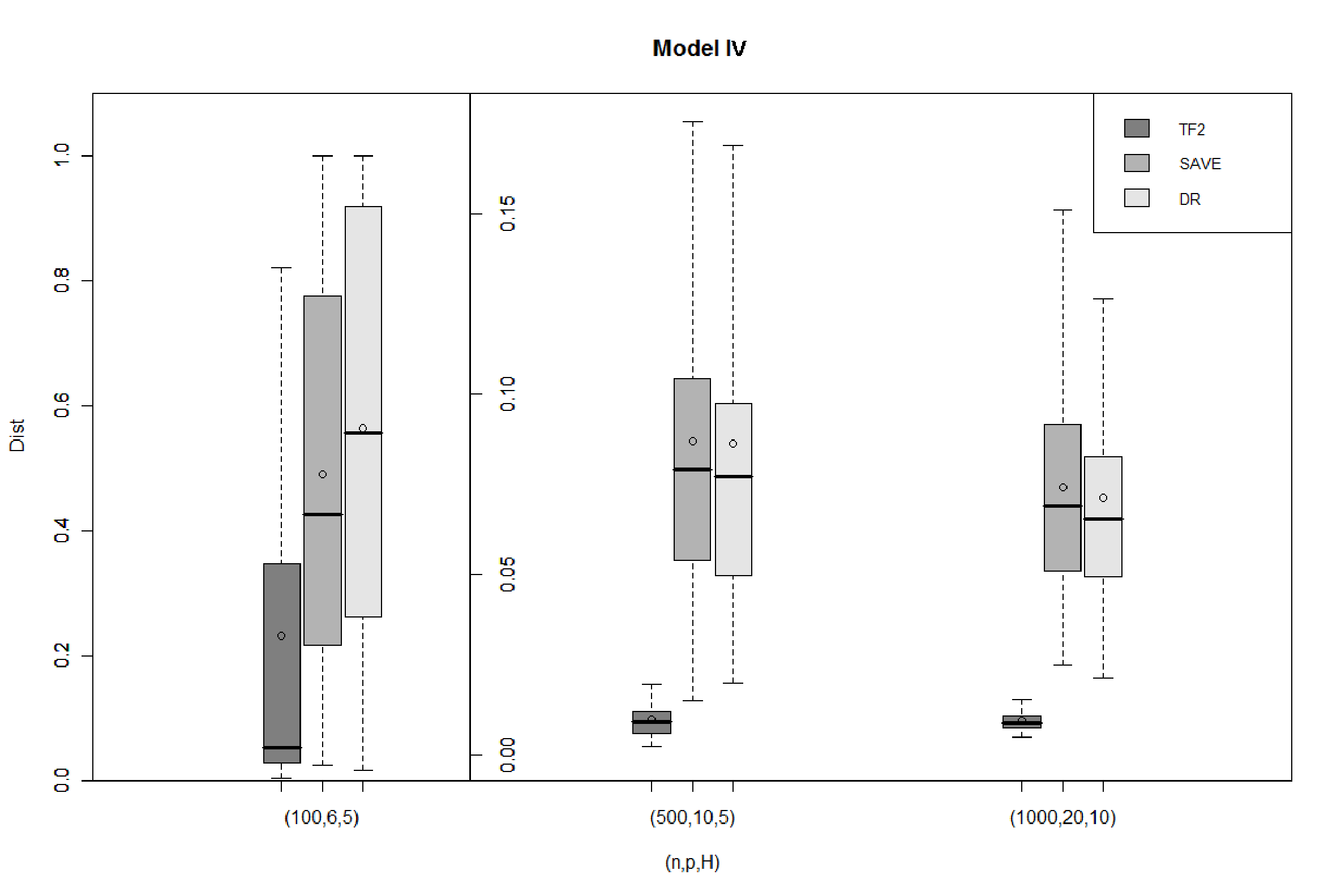} 
\includegraphics[width=4.1cm,height=6cm]{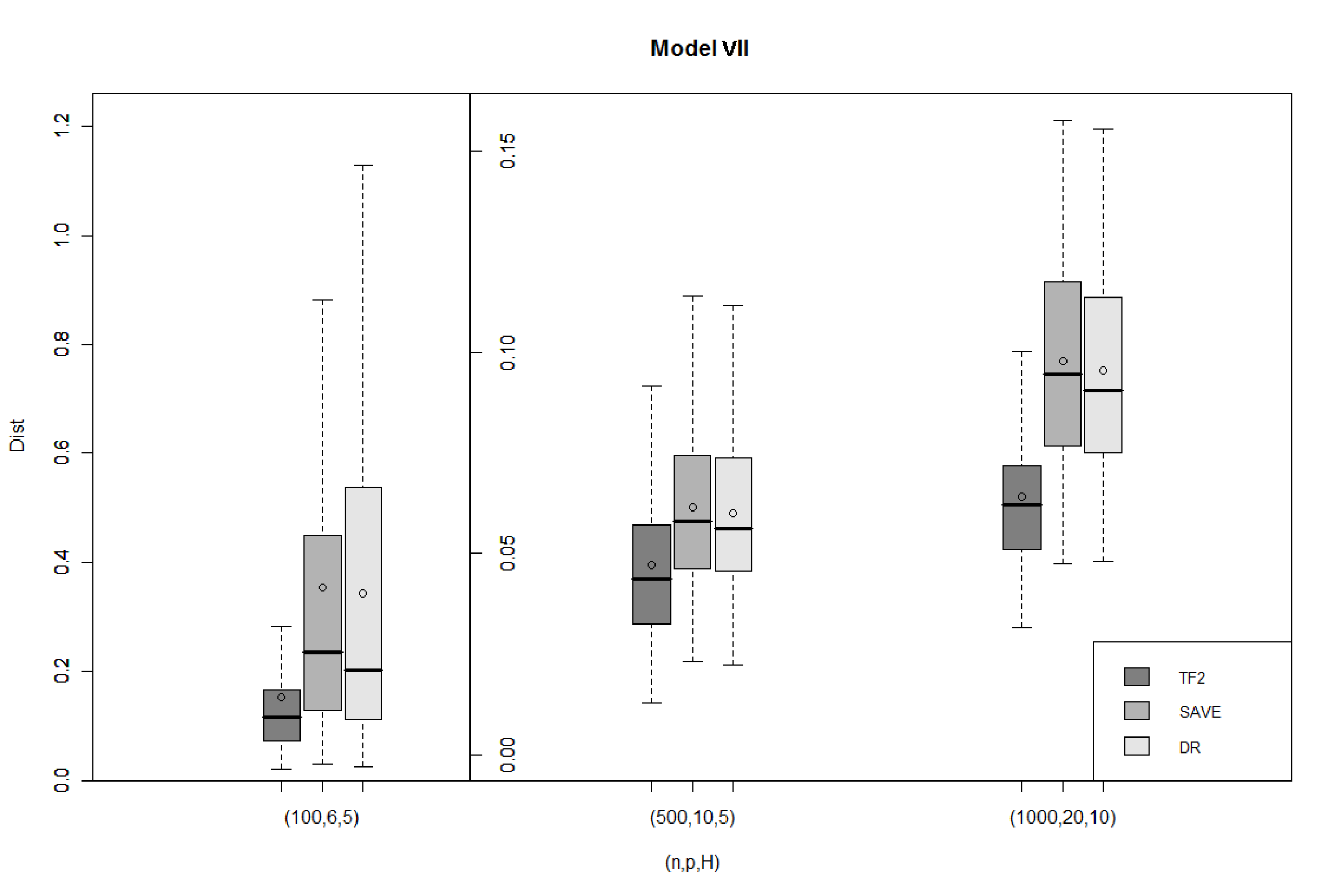} 
\includegraphics[width=4.1cm,height=6cm]{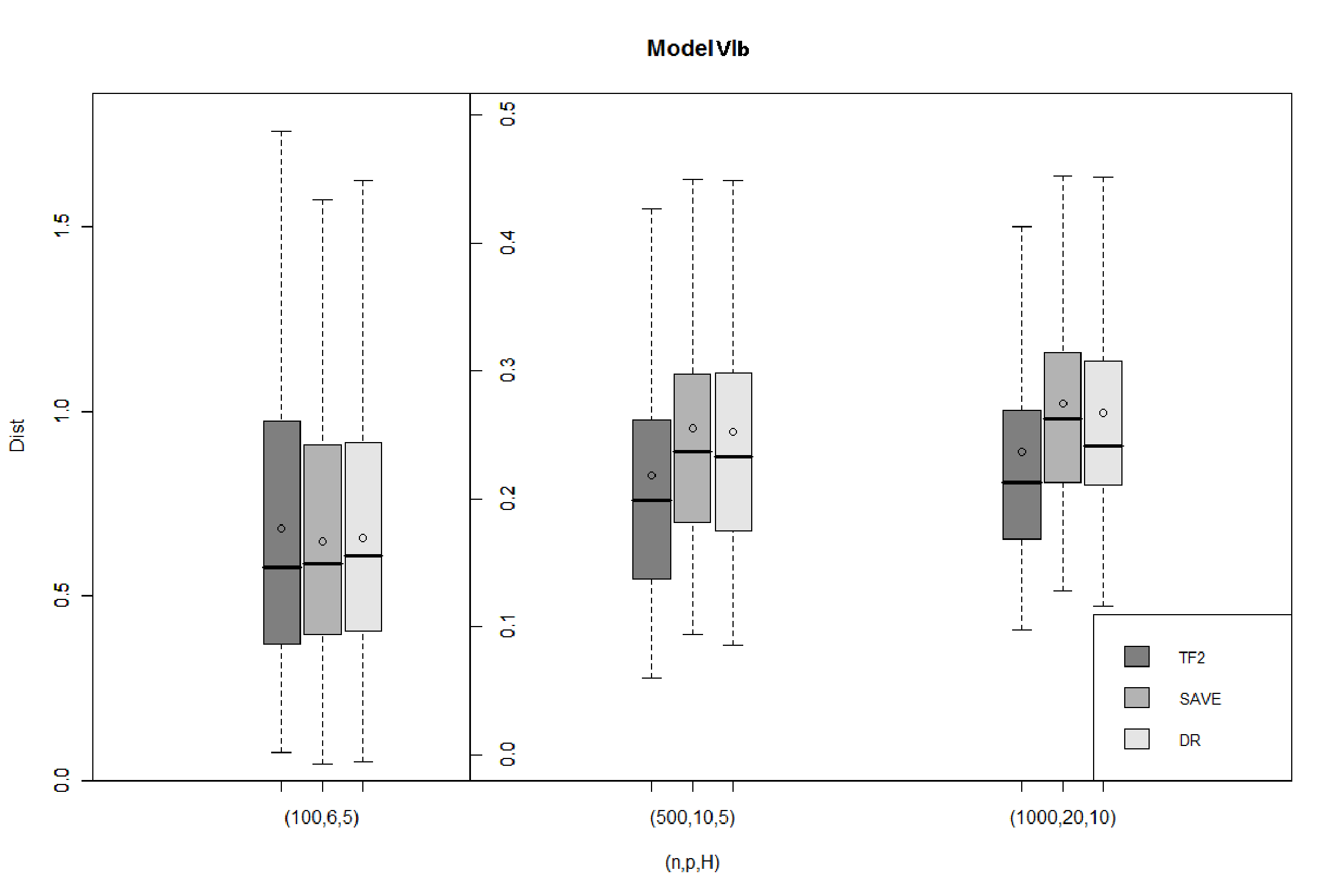} 
\caption{\label{fig4}Comparison of TF2, SAVE and DR	when $X$ has a spherical distribution.}
\end{figure}

Model I still reflects the most important improvement of TF2 with respect to SAVE and DR. When $n$ is large, it performs around height times better than the other. In model VIb, TF2 estimation deteriorates by changing distribution of the predictors from gaussian into spherical. Finally, model VII provides a standard new situation where the improvement of TF2 is highly significant.

\section{Concluding remarks}
This article introduces the basis of a new methodology about SDR. Although the theoretical background of TF1 and TF2 is quite the same than SDR methods, the methods proposed work under weaker conditions than the ones of the literature. Moreover, the resulting estimation methods are not at all the same. Indeed, the introduction of some transformation of the response was the original idea of this work and has led us to some new way of investigation in SDR. A surprising point was the similarity between SIR and the TF1 variance minimization. For TF2, the simulation study underlines its high accuracy over other order $2$ methods and legitimates the use of TF. However, the framework develop here is not yet completed. 

First, the estimation of the dimension of the CS has been avoided in the present work. Prospects can be find in the Pearson's chi-square statistic used to select a basis of the CS: this statistic could also be employed to estimate the dimension of the CS. Simulations about such a dimension estimation method provided until now some good results. Moreover, an idea which is still under development, is to incorporate such a test in the TF2 algorithm.

Secondly, TF offers a lot of different methods deriving from the choice of a family of functions that separates the points (see Remark \ref{coudene} and Corollary \ref{sumpsi}). Here we attached to study TF with indicators. The Fourier basis or a polynomial family could also be considered to derive some new methods. Besides for TF1 and TF2, a smooth kernel estimation of the function $\psi$ may lead to better convergence rates.   

Finally, we have some few words about a set of methods called hybrid. Some regression function has different kind of components. Consequently, in many cases a particular method would provide a good estimate of some components but another one would be needed to infer about the remaining components. This clearly argues for methods that are a mixing of the existing ones. This kind of methods are usually called hybrid method, they can be summarized by the equation 
\begin{eqnarray*}
M=\alpha M_1+(1-\alpha)M_2,
\end{eqnarray*}
where $M_1$ and $M_2$ are the associated matrix of two different methods. A spectral decomposition of $M$ gives an hybrid estimation of the CS. This kind of consideration were recommended by \citet*{saracco2003}, and \citet*{ye2003} proposed a bootstrap method to select the parameter $\alpha$. This includes the combination of SIR and SAVE, SIR and pHd, and SIR and SIR-II. Besides, it is commonly known that
\begin{equation*}
M_{SAVE} = M_{SIR}^2+M_{SIR-II},
\end{equation*}
and that
\begin{equation*}
M_{DR}=\E[\E[(ZZ^T|Y]-I)^2]+M_{SIR}^2+\tr(M_{SIR})M_{SIR},
\end{equation*} 
making SAVE and DR some combinations of SIR and order $2$ moments based methods. Therefore SAVE and DR do not only involve order $2$ moments of the predictors given the response. Thus it seems more realistic to develop hybrid methods based on TF1 and TF2 matrices and specifically, a choice of the parameter $\alpha$ could be realized by the optimization of a well chosen criterion as it has been done independently in TF1 and TF2. Work along this line is in progress.

\appendix
\section*{appendix}
The following lemma is a simplified version of a result about subspaces of
non-invertible matrices (see \citet*{draisma2006}, proposition 3). 
\begin{lemma}\label{lemmematrix}
Let $M$, $N$ $\in \R^{d\times d}$ and $\alpha_0>0$. If $\forall \alpha\leq \alpha_0$, $\rank(N+\alpha M)\leq \rank(N)$, then
\begin{equation*}
M\ker(N)\subset \im (N).
\end{equation*}
\end{lemma}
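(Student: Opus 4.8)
The plan is to exploit the fact that the rank hypothesis pins the image of the perturbed matrix $N+\alpha M$ down to exactly $\rank(N)$ for all small $\alpha$, and then to pass to the limit $\alpha\to 0$. Write $r=\rank(N)$ and $K=\ker(N)$, so $\dim K=d-r$. Each $(r+1)\times(r+1)$ minor of $N+\alpha M$ is a polynomial in $\alpha$ that vanishes on the whole interval $(0,\alpha_0]$, hence is identically zero; thus $\rank(N+\alpha M)\le r$ for every $\alpha$ in a neighbourhood of $0$, and in particular along a sequence $\alpha\to 0^+$.

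First I would produce a matching lower bound. Choose $u_1,\dots,u_r$ with $Nu_1,\dots,Nu_r$ a basis of $\im(N)$. Since $(N+\alpha M)u_j\to Nu_j$ as $\alpha\to 0$ and the limits are linearly independent, the vectors $(N+\alpha M)u_1,\dots,(N+\alpha M)u_r$ remain independent for $\alpha$ small; hence $\rank(N+\alpha M)\ge r$. Combined with the upper bound this forces $\rank(N+\alpha M)=r$ together with $\im(N+\alpha M)=\spann\{(N+\alpha M)u_j:j=1,\dots,r\}$ for all small $\alpha$.

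Next, fix $v\in K$. Because $Nv=0$ we have $(N+\alpha M)v=\alpha Mv\in\im(N+\alpha M)$, and dividing by $\alpha\neq 0$ gives $Mv\in\im(N+\alpha M)$ for every small $\alpha\neq 0$. Writing this membership in the spanning family, $\alpha Mv=\sum_j c_j(\alpha)(N+\alpha M)u_j$; since the $d\times r$ matrix $[(N+\alpha M)u_j]_j$ has full column rank near $\alpha=0$ it admits a left inverse depending continuously on $\alpha$, so the coefficients satisfy $c_j(\alpha)\to 0$ as $\alpha\to 0$. Rearranging, $Mv-\sum_j c_j(\alpha)Mu_j=\frac{1}{\alpha}\sum_j c_j(\alpha)Nu_j\in\im(N)$; the left-hand side converges to $Mv$ as $\alpha\to 0$, and since $\im(N)$ is closed we conclude $Mv\in\im(N)$. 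As $v\in\ker(N)$ was arbitrary this yields $M\ker(N)\subset\im(N)$. Equivalently, one may phrase the limit step through orthogonal projections: the projector $P_\alpha$ onto $\im(N+\alpha M)$ converges to the projector onto $\im(N)$, and the identity $P_\alpha Mv=Mv$ passes to the limit.

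The main obstacle is exactly this passage to the limit: a vector can lie in every member of a convergent family of subspaces $\im(N+\alpha M)$ and yet fail to lie in the limiting subspace, so the argument must genuinely use either the vanishing rate of the coefficients $c_j(\alpha)$ together with the closedness of $\im(N)$, or the continuity of the orthogonal projectors. By contrast, the steps establishing $\rank(N+\alpha M)=r$ and the explicit spanning set of $\im(N+\alpha M)$ are routine linear algebra.
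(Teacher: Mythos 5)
Your proof is correct, and it takes a genuinely different route from the paper's. You work directly with images: a fixed basis $Nu_1,\dots,Nu_r$ of $\im(N)$, the observation that $(N+\alpha M)u_1,\dots,(N+\alpha M)u_r$ stay independent for small $\alpha$ (so, with the rank hypothesis, they span $\im(N+\alpha M)$), then unique coordinates $c_j(\alpha)$ extracted by a continuously varying left inverse, and finally the limit $\alpha\to 0$ using that the subspace $\im(N)$ is closed. The paper instead works with kernels and spectral calculus: it controls $\dim\ker(N+\alpha M)$ through the convergence of the characteristic polynomials, represents the projections $\Pi_\alpha$ onto the kernels as contour integrals of the resolvent, proves $\Pi_\alpha\to\Pi_0$, and extracts the conclusion from the algebraic identity $N(\Pi_0-\Pi_\alpha)=\alpha M\Pi_\alpha$, which gives $\im(M\Pi_\alpha)\subset\im(N)$ before passing to the limit. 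Both arguments hinge on the same two pillars --- stability of the rank (or kernel dimension) under the hypothesis, and a closedness/continuity argument to justify the limit, which you correctly single out as the only delicate step --- but your version buys two things: it is entirely elementary (no residue theorem), and it is valid for arbitrary square matrices exactly as the lemma is stated, whereas the Riesz projection $\oint_{\mathcal{C}}(N-zI)^{-1}\,\dd z$ is the projection onto the \emph{generalized} nullspace, so the paper's identities such as $N\Pi_0=0$ require $N$ to have no nontrivial Jordan structure at the eigenvalue $0$ (harmless in the paper's application, Proposition \ref{propmatrix}, where the matrices are symmetric, but a real restriction in general). The paper's approach, in exchange, is formally shorter and reuses the contour-integral machinery that also drives Proposition \ref{calvar}.
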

\begin{proof}
Denote by $P_{\alpha}$ the characteristic polynomial of $N+\alpha M$ and define $r_{\alpha}=\rank(N+\alpha M)$ and $k_{\alpha}=\dim (\ker(N+\alpha M))=d-r_{\alpha}$. Because of the continuity of the determinant, the coefficients of $P_{\alpha}$ converge to the coefficients of $P_0$, then $P_{\alpha}$ converges uniformly to $P_0$ on every compact. By definition of $k_0$, $P_0$ is such that
\begin{equation*}
P_{0}(x)=x^{k_0}Q_0(x)\quad\text{with }Q_0(0)\neq 0.  
\end{equation*}
Now we use the uniform convergence. First for $\alpha$ small enough we have $P_{\alpha}^{(k_0)}(0)\neq 0$, and this gives the upper bound $k_\alpha\leq k_0$. Using the assumption we obtain $k_0=k_{\alpha}$. Therefore, again from the uniform convergence, for some $\alpha_0$, 
\begin{equation*}
Q_{\alpha}(0)\neq 0, \quad \alpha\leq \alpha_0.
\end{equation*}  
Clearly, there exists a contour $\mathcal{C}$ such that none of the nonzero eigenvalues of $N+\alpha M$ belong to $\mathcal{C}$, $\alpha\leq \alpha_0$. Using the residue Theorem, this allows us to recover the respective projections $\Pi_0$ and $\Pi_{\alpha}$ on the kernel of the matrices $N$ and $N+\alpha M$ in the following way 
\begin{equation*}
\Pi_0=\oint_{\mathcal{C}} (N-zI)^{-1}\dd z,\quad \text{and} \quad \Pi_{\alpha}=\oint_{\mathcal{C}}(N+\alpha M-zI)^{-1}\dd z,
\end{equation*} 
and we can see that
\begin{equation*}
\Pi_0-\Pi_{\alpha} =\alpha \oint_{\mathcal C} (N-zI)^{-1}M(N+\alpha M-zI)^{-1} \dd z.
\end{equation*}
Because as $\alpha$ goes to $0$, none of the eigenvalues of $N$ and $N+\alpha M$ crosses $\mathcal C$, the integral converges and then we derive that $\Pi_{\alpha} \rightarrow \Pi_ 0$ as $\alpha \rightarrow 0$. Besides, we have 
\begin{equation*}
(N+\alpha M) \Pi_{\alpha}=0, \quad \text{and} \quad N\Pi_0=0,
\end{equation*}   
which lead us to $N(\Pi_0-\Pi_{\alpha})=\alpha M \Pi_{\alpha}$, and we obtain 
\begin{equation*}
\im(MP_{\alpha}) \subset \im(N).
\end{equation*}
Using the continuity of $\Pi_{\alpha}$, we conclude the proof.
\end{proof}

\begin{proposition}\label{propmatrix}
Let $\mathcal{M}\subset \R^{d\times d}$ be a linear subspace of noninvertible symmetric matrices. Then 
\begin{equation*}
\exists u\in \R^d,\quad \forall M\in \mathcal{M}, \quad u^T Mu=0.
\end{equation*}
\end{proposition}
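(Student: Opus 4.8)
The plan is to reduce the statement to a single application of Lemma \ref{lemmematrix} after selecting a well-chosen element of $\mathcal{M}$. Since every matrix of $\mathcal{M}$ is noninvertible, the rank function takes values in $\{0,1,\dots,d-1\}$ on $\mathcal{M}$, so it attains a maximum; let $N \in \mathcal{M}$ be a matrix realizing this maximal rank $r = \rank(N) < d$. Because $r < d$, the kernel $\ker(N)$ is nontrivial, and this kernel will furnish the vector $u$ we are after.

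First I would check the hypothesis of Lemma \ref{lemmematrix} for the fixed $N$ and an arbitrary $M \in \mathcal{M}$. As $\mathcal{M}$ is a linear subspace, $N + \alpha M \in \mathcal{M}$ for every real $\alpha$, whence $\rank(N + \alpha M) \le r = \rank(N)$ by maximality of $r$; in particular this holds for all $\alpha \le \alpha_0$ for any choice of $\alpha_0 > 0$. Lemma \ref{lemmematrix} then yields
\begin{equation*}
M \ker(N) \subset \im(N), \qquad \text{for every } M \in \mathcal{M}.
\end{equation*}

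The final step exploits the symmetry of $N$. For a symmetric matrix one has the orthogonal decomposition $\R^d = \ker(N) \oplus \im(N)$, so that $\im(N) = \ker(N)^{\perp}$. Fix any nonzero $u \in \ker(N)$. For every $M \in \mathcal{M}$ the inclusion above gives $Mu \in \im(N) = \ker(N)^{\perp}$, while $u \in \ker(N)$; hence $u^T M u = \langle u, Mu\rangle = 0$. This is exactly the asserted conclusion, with one and the same $u$ working for all $M \in \mathcal{M}$ simultaneously.

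The argument is short once Lemma \ref{lemmematrix} is in hand, and I expect the only delicate points to be organizational rather than computational: one must first guarantee that the maximal rank is attained, which is immediate since the rank is integer-valued and bounded by $d-1$, so that the rank hypothesis of the Lemma holds uniformly in $\alpha$; and one must invoke symmetry precisely at the end to turn the image-inclusion $M\ker(N)\subset\im(N)$ into the orthogonality $u^T M u = 0$. Without symmetry the Lemma alone would only give $Mu \in \im(N)$, which need not be orthogonal to $u$; it is the identification $\im(N) = \ker(N)^{\perp}$ that closes the proof.
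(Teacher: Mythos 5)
Your proof is correct and follows essentially the same route as the paper's: pick $N$ of maximal rank in $\mathcal{M}$, use linearity of the subspace to verify the rank hypothesis of Lemma \ref{lemmematrix}, and then exploit symmetry of $N$ to convert $M\ker(N)\subset\im(N)$ into $u^TMu=0$ for $u\in\ker(N)$. The only cosmetic difference is that you phrase the last step via the orthogonal decomposition $\im(N)=\ker(N)^{\perp}$, while the paper writes $Mu=Ny$ and multiplies by $u^T$; these are the same use of symmetry, and your version also spells out explicitly the verification of the lemma's hypothesis, which the paper leaves implicit.
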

\begin{proof}
Since $\mathcal{M}$ is a linear subspace, we can apply Lemma \ref{lemmematrix} with $N$ a matrix of maximal rank in $\mathcal{M}$ and any $M\in \mathcal{M}$. This gives, for every $M$ and every $u\in \ker(N)$,
\begin{equation*}
Mu = N y,
\end{equation*}
with $y\in \R^d$. Because $N$ is symmetric, by multiplying the left-hand side by $u^T$, we obtain $u^TMu=0$.
\end{proof}

\bibliography{bibl}

\end{document}